\definecolor{red}{rgb}{1,0,0}
\newcommand{\vvirg}{ , \dots , }
\newcommand{\calE}{\mathcal{E}}
\newcommand{\calO}{\mathcal{O}}
\newcommand{\calP}{\mathcal{P}}
\newcommand{\calS}{\mathcal{S}}
\newcommand{\calX}{\mathcal{X}}
\newcommand{\calY}{\mathcal{Y}}
\newcommand{\calZ}{\mathcal{Z}}
\newcommand{\bbP}{\mathbb{P}}
\newcommand{\bbQ}{\mathbb{Q}}
\newcommand{\frakS}{\mathfrak{S}}
\newcommand{\frakc}{\mathfrak{c}}
\newcommand{\rmV}{\mathrm{V}}
\renewcommand{\phi}{\varphi}
\newcommand{\dashto}{\dashrightarrow}
\renewcommand{\tilde}[1]{\widetilde{#1}}
\renewcommand{\bar}[1]{\overline{#1}}
\newcommand{\rank}{\mathrm{rank}}
\DeclareMathOperator{\codim}{codim}
\DeclareMathOperator{\Hom}{Hom}
\DeclareMathOperator{\Ann}{Ann}
\newcommand{\Syz}{\mathrm{Syz}}
\newcommand{\Macaulay}{\texttt{Macaulay2} }
\newcommand{\KK}{\Bbbk}
\newcommand{\PP}{\mathbb{P}}
\newcommand{\ZZ}{\mathbb{Z}}
\newcommand{\QQ}{\mathbb{Q}}
\newcommand{\FF}{\mathbb{F}}
\DeclareMathOperator{\Gr}{Gr}
\DeclareMathOperator{\regH}{reg_H}
\DeclareMathOperator{\regCM}{reg_{CM}}
\newcommand{\Hm}{\mathrm{H}_{\mathfrak m}}
\newcommand{\sing}{\mathrm{sing}}
\DeclareMathOperator{\modend}{end}
\newcommand{\genHF}{\mathrm{genHF}}
\newcommand{\cmap}{\mathfrak{c}}
\newcommand{\leqlex}{\mathrel{\leq_{\textup{lex}}}}
\newcommand{\geqlex}{\mathrel{\geq_{\textup{lex}}}}
\DeclareMathOperator{\Ker}{Ker}
\title{\bf Hilbert Functions of Chopped Ideals}
\author{Fulvio Gesmundo, Leonie Kayser and Simon Telen}
\date{}
\newtheorem{lemma}{Lemma}[section]
\newtheorem{proposition}[lemma]{Proposition}
\newtheorem{corollary}[lemma]{Corollary}
\newtheorem{theorem}[lemma]{Theorem}
\newtheorem{conjecture}{Conjecture}
\theoremstyle{definition}
\newtheorem{definition}[lemma]{Definition}
\newtheorem{remark}[lemma]{Remark}
\newenvironment{example}
  {\pushQED{\qed}\examplex}
  {\popQED\endexamplex}
\newcommand\blankfootnote[1]{%
  \let\thefootnote\relax\footnotetext{#1}%
  \let\thefootnote\svthefootnote%
}
\begin{document}
\maketitle

\begin{abstract}
\noindent A chopped ideal is obtained from a homogeneous ideal by considering only the generators of a fixed degree. We investigate cases in which the chopped ideal defines the same finite set of points as the original one-dimensional ideal. The  complexity of computing these points from the chopped ideal is governed by the Hilbert function and regularity. We conjecture values for these invariants and prove them in many cases. We show that our conjecture is of practical relevance for symmetric tensor decomposition.
\end{abstract}

\section{Introduction}\label{sec:1}
\blankfootnote{\textit{Keywords.} Hilbert function, Hilbert regularity, syzygy, liaison, tensor decomposition}
\blankfootnote{\textit{2020 Mathematics Subject Classification.} 13D02, 13C40, 14N07, 65Y20}
Let $\KK$ be an algebraically closed field of characteristic $0$, and let $S = \KK[x_0, \ldots, x_n]$ be the polynomial ring in $n+1$ variables with coefficients in $\KK$. With its standard grading, $S$ is the homogeneous coordinate ring of the $n$-dimensional projective space $\PP^n = \PP^n_\KK$. For a tuple of $r$ points $Z = (z_1, \ldots, z_r) \in (\PP^n)^r$, let $I(Z)$ be the associated vanishing ideal, that is
\[ 
I(Z) = \langle \set{f \in S \text{ homogeneous} | f(z_i) = 0 \text{ for }i=1,\dots,r } \rangle_S .
\]
If the set of points $Z \in (\bbP^n)^r$ is general, the Hilbert function $h_{S/I(Z)} \colon t \mapsto {\dim}_\KK (S/I(Z))_t$ is
\begin{equation} \label{eq:genericHF}
    h_{S/I(Z)}(t) \,=\, \min  \{ h_S(t) , r \}
\end{equation} 
 where $ h_S(t) = \binom{n+t}{n}$ is the Hilbert function of the polynomial ring $S$. Here, the word \emph{general} means that \eqref{eq:genericHF} holds for all $Z$ in a dense Zariski open subset of $(\bbP^n)^r$. We also have, for general $Z$, that the ideal $I(Z)$ is generated in degrees $d$ and $d +1$, where $d$ is the smallest integer such that the minimum in \eqref{eq:genericHF} equals $r$ \cite[Thm.~1.69]{IarrKan:PowerSumsBook}.

This work focuses on a modification $I_{\langle d \rangle}$ of the ideal $I(Z)$, called its  \emph{chopped ideal} in degree $d$. This is defined as the ideal generated by the homogeneous component $I(Z)_d$, that is the set of elements of degree $d$ in $I(Z)$. In particular $I_{\langle d \rangle} \subseteq I(Z)$, and strict inclusion holds if and only if $I(Z)$ has generators in degree $d+1$. An elementary dimension count shows that there is a range for $r$ for which this happens, see \eqref{eq:range_r}. 

If $r < h_S(d) - n$ and $Z$ is general, the saturation $(I_{\langle d \rangle})^{\rm{sat}}$ of the chopped ideal with respect to the irrelevant ideal of $S$ coincides with the ideal $I(Z)$. This is proved in \autoref{thm:trisec}. In other words, $I(Z)$ and $I_{\langle d \rangle}$ both cut out $Z$ scheme-theoretically. In particular, $I(Z)$ and $I_{\langle d \rangle}$ coincide in large degrees, and they have the same constant Hilbert polynomial, equal to $r$. 

In cases where $(I_{\langle d \rangle})^{\rm sat} = I(Z)$ and $I_{\langle d \rangle} \neq I(Z)$, there is a range of degrees $d<t<d+e$ with the property that $h_{S/I_{\langle d \rangle}}(t) > h_{S/I(Z)}(t)$. The goal of this work is to determine this \emph{saturation gap}, and understand the geometric and algebraic properties that control it. We illustrate this phenomenon in a first example.

\begin{example}[$n = 2, d = 5$] \label{ex:intro1}
Let $Z$ be a set of $17$ general points in the plane $\bbP^2$. The lowest degree elements of $I(Z)$ are in degree $5$ and, a priori, $I(Z)$ can have minimal generators in degree $5$ and $6$. It turns out that $I(Z)$ is generated by four quintics. In particular, its chopped ideal $I(Z)_{\langle 5 \rangle}$ coincides with $I(Z)$. We provide a simple snippet of code in \Macaulay \cite{M2} to compute $I(Z)$, its chopped ideal, and the corresponding Hilbert functions: in this case, line 4 returns \texttt{true} and the two Hilbert functions coincide. 
\begin{minted}{macaulay2.py:Macaulay2Lexer -x}
loadPackage "Points"
I = randomPoints(2,17);
Ichop = ideal super basis(5,I);
I == Ichop  
for t to 10 list {t, hilbertFunction(t,I), hilbertFunction(t,Ichop)}
\end{minted}
This uses the convenient package \texttt{Points.m2} to calculate the ideal $I(Z)$ \cite{PointsSource}. Now let $Z$ be a set of $18$ general points in $\PP^2$. In this case $I(Z)$ is generated by three quintics \emph{and one sextic}. The chopped ideal $I_{\langle 5 \rangle}$ is the ideal generated by the three quintics. Changing \texttt{17} into \texttt{18} in line 2 provides the code to compute the chopped ideal of $I(Z)$. Now, line 8 returns \texttt{false} and the two Hilbert functions are recorded below:
\[ \begin{array}{c|ccccccccccc}
t & 0 & 1 & 2 & 3 & 4 & 5 & 6 & 7 & 8 & 9 & \dots \\ \hline
h_{S/I(Z)}(t) & 1 & 3 & 6 & 10 & 15 & 18 & 18 & 18 & 18 & 18 & \dots \\
h_{S/I(Z)_{\langle 5\rangle}}(t) & 1 & 3 & 6 & 10 & 15 & 18 & \mathbf{19} & 18 & 18 & 18 & \dots 
\end{array}
\]
We observe that the Hilbert polynomials are the same: they are equal to the constant $18$. However, the Hilbert function of $S/I_{\langle 5 \rangle}$ overshoots the Hilbert polynomial in degree $6$, and then falls back to $18$ in degree $7$. This specific example is explained in detail in \autoref{sec:quintics}. More generally, the goal of this work is to better understand this phenomenon for all $r$.
\end{example}

We will see that the problem of understanding the Hilbert function of the chopped ideal of a set of points is related to several classical conjectures in commutative algebra and algebraic geometry, such as the Ideal Generation Conjecture and the Minimal Resolution Conjecture. Besides this, our motivation comes from computational geometry. In the most general setting, one is given a system of homogeneous polynomials with the task of determining the finite set of solutions $Z$. In a number of applications, the given polynomials generate a subideal of $I(Z)$. Often, this subideal is the chopped ideal $I_{\langle d \rangle}$. This happens, for instance, in classical tensor decomposition algorithms, see \autoref{sec:6}. In order to solve the resulting polynomial system using normal form methods, such as Gr\"obner bases, one constructs a \emph{Macaulay matrix} of size roughly $h_S(d+e)$, where $e$ is a positive integer such that $h_{S/I_{\langle d \rangle}}(d+e) = r$; see \cite{emiris1999matrices,telen2020thesis} for details. Hence, it is important to answer the following question: 
\begin{equation} \label{eq:ourquestion}
    \textit{\enquote{What is the smallest $e_0 > 0$ such that $h_{S/I_{\langle d \rangle}}(d+e_0) = r$?}}  
\end{equation} 
\autoref{ex:intro1} analyzes two cases for $d = 5$ in $\bbP^2$: if $r = 17$ then the answer is $e_0 = 1$, and for $r = 18$ the answer is $e_0 = 2$. Interestingly, this means that finding $18$ points from their vanishing quintics using normal form methods is significantly harder than finding $17$ points. 

The following conjecture predicts the Hilbert function of the chopped ideal $I_{\langle d \rangle}$. 

\begin{conjecture}[Expected Syzygy Conjecture]\label{conj:chopped_hilbert_function_intro}
Let $Z$ be a general set of $r$ points in $\bbP^n$ and let $d$ be the smallest value for which $h_S(d) \geq r$. Then for any $e\geq 0$
\begin{equation} \label{eq:expectedhf}
h_{I_{\langle d \rangle}}(d+e) \,=\, \begin{cases}
 \displaystyle\sum_{k \geq 1} (-1)^{k+1} \cdot h_S(d+e - kd) \cdot \binom{h_S(d)-r}{k} & e< e_0 \\
 h_S(d+e) -r & e\geq e_0
\end{cases}
\end{equation}
where $e_0>0$ is the smallest integer such that the summation is at least $h_S(d+e_0)-r$.
\end{conjecture}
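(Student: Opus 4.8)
The plan is to read Conjecture~\ref{conj:chopped_hilbert_function_intro} as a statement about the ranks of the multiplication maps
\[
\mu_e\colon S_e\otimes_\KK I(Z)_d\longrightarrow S_{d+e},\qquad g\otimes f\longmapsto gf,
\]
since $h_{I_{\langle d\rangle}}(d+e)=\rank\mu_e$. Writing $m=h_S(d)-r$ and expanding $(1-t^d)^m$, the first case of \eqref{eq:expectedhf} is exactly the coefficient of $t^{d+e}$ in $\frac{1-(1-t^d)^m}{(1-t)^{n+1}}$, i.e.\ the value of $\rank\mu_e$ one gets when the syzygy module of a basis of $I(Z)_d$ agrees, through degree $d+e$, with the Koszul submodule. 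Equivalently, in terms of the Koszul complex $K_\bullet=K_\bullet(f_1,\dots,f_m)$ on a basis of $I(Z)_d$, whose Euler characteristic in degree $D$ is $\sum_{k\ge 0}(-1)^k\binom{m}{k}h_S(D-kd)$, and whose zeroth homology is $S/I_{\langle d\rangle}$, the conjecture says that the ``extra'' contribution $\sum_{i\ge 1}(-1)^{i+1}\dim_\KK H_i(K_\bullet)_D$ vanishes for $D<d+e_0$ and equals $r-\sum_{k\ge 0}(-1)^k\binom mk h_S(D-kd)$ for $D\ge d+e_0$; the cleanest sufficient statement is that the Koszul homology of $I(Z)_d$ starts only in degree $d+e_0$. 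Dually, set $G=I(Z)/I_{\langle d\rangle}$: by \autoref{thm:trisec} it equals $H^0_{\mathfrak m}(S/I_{\langle d\rangle})$ and has finite length, and the conjecture predicts $\dim_\KK G_{d+e}=\sum_{k\ge 0}(-1)^k\binom{m}{k}h_S(d+e-kd)-r$ for $e<e_0$ and $G_{d+e}=0$ for $e\ge e_0$. One inequality is formal: the surjection $S/I_{\langle d\rangle}\twoheadrightarrow S/I(Z)$ forces $h_{S/I_{\langle d\rangle}}(d+e)\ge r$, and since $\rank\mu_e$ is lower semicontinuous in $Z\in(\PP^n)^r$, each required bound need only be checked on a \emph{single} well-chosen configuration lying in the open set of \autoref{thm:trisec}. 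If no $e_0$ exists, only the first case of \eqref{eq:expectedhf} is asserted; this occurs when $I_{\langle d\rangle}$ is generated by a regular sequence of at most $n$ general forms of $I(Z)_d$, where the formula is just the Hilbert series of the associated Koszul complex, so I concentrate on the main case $r<h_S(d)-n$, i.e.\ $m>n$.

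\emph{Stable range $e\ge e_0$.} Here I would combine \autoref{thm:trisec} with the fact that for general $Z$ the ideal $I(Z)$ is generated in degrees $d$ and $d+1$ \cite[Thm.~1.69]{IarrKan:PowerSumsBook}. It then suffices to prove the single equality $S_{e_0}\cdot I(Z)_d=I(Z)_{d+e_0}$: multiplying by $S_1$ and using generation of $I(Z)$ in degrees $\le d+1\le d+e_0$ propagates $(I_{\langle d\rangle})_{d+e}=I(Z)_{d+e}$ to all $e\ge e_0$, whence $h_{S/I_{\langle d\rangle}}(d+e)=r$ there. In resolution language this is the statement $\operatorname{reg}(I_{\langle d\rangle})\le d+e_0$, i.e.\ $G$ is concentrated in degrees $<d+e_0$; the definition of $e_0$ is tailored so that this drops out once the initial range is understood.

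\emph{Initial range $e<e_0$ --- the crux.} By the semicontinuity reduction it is enough to exhibit, for each $e<e_0$, one set of $r$ points on which $\rank\mu_e$ attains the conjectural value, equivalently on which the Koszul relations among a basis of $I(Z)_d$ already span the degree-$(d+e)$ part of the syzygy module and are as independent as expected (equivalently, $H_i(K_\bullet)_{d+e}=0$ for $i\ge 1$). Several specializations look workable, each in a range of $(n,d,r)$: (i) points on a rational normal curve, where $I(Z)$, its degree-$d$ piece and the relevant syzygies admit explicit binomial descriptions; (ii) points on a union of linear subspaces, setting up an induction on $n$; (iii) invoking known cases of the Minimal Resolution and Ideal Generation Conjectures for general points --- the Gaeta/Hilbert--Burch description in $\PP^2$, and results of Ballico--Geramita, Walter, Hirschowitz--Simpson and others in $\PP^3$, $\PP^4$ and for $r$ near either end of the range \eqref{eq:range_r}; (iv) liaison: choose $n$ general forms of $I(Z)_d$ --- a regular sequence, since its base locus is $0$-dimensional by \autoref{thm:trisec} --- to link $Z$, inside a complete intersection of degree $d^n$, to a residual set $W$ of $d^n-r$ points, and then read $I(Z)/J$ and the Koszul homology of $I(Z)_d$ off the canonical module $\omega_{S/I(W)}$, inducting on $r$.

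The main obstacle is precisely this initial range in general. It is a maximal-rank statement, but for the \emph{constrained} linear system $I(Z)_d$, which for general $Z$ is a generic subspace of $S_d$ \emph{among those annihilating $r$ points of the $d$-th Veronese variety} --- a question of Fr\"oberg/minimal-resolution flavour, of the same difficulty as the classical conjectures it specialises, and open in general. Accordingly I would expect a complete proof only where the requisite resolution input is available: small $n$, $r$ near the extremes of \eqref{eq:range_r}, points on a rational normal curve, and configurations reachable by one or two liaison steps --- presumably the scope of the ``many cases'' established in this paper.
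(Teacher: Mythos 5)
You have correctly recognized that the statement is a conjecture, not a theorem, and the framework you set up is essentially the paper's: the formal lower bound (your surjection $S/I_{\langle d\rangle}\twoheadrightarrow S/I(Z)$ plus the Koszul/Fr\"oberg count is the content of \autoref{thm: ESC lower bound}), the propagation past $e_0$ via generation of $I(Z)$ in degrees $d,d+1$, the reduction by semicontinuity of $\rank\mu_e$ to a single witness configuration (\autoref{prop: semicontinuity multiplication map}), liaison inside a complete intersection of $n$ general members of $I(Z)_d$ for the extremal case $r=h_S(d)-(n+1)$ (\autoref{thm: rmax in general}), and IGC/MRC input in the range where the conjecture reduces to them (\autoref{lem:interesting_range}). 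So for the cases the paper actually proves, your routes (iii) and (iv) coincide with theirs, and your honest assessment that the initial range is open in general matches the paper's status of the conjecture.

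Two points of divergence are worth flagging. First, your proposed specializations (i) and (ii) cannot certify the conjecture in the relevant range \eqref{eq:range_r}: there $r$ is of order $\binom{n+d}{n}$, while $r$ points on a rational normal curve (or on low-dimensional linear configurations) fail to have the generic Hilbert function \eqref{eq:genericHF} — they do not even lie in $U_{\genHF}$, so semicontinuity gives nothing from them; the paper in fact shows (\autoref{thm:nomonomials}) that even the monomial-ideal specializations that prove the MRC in $\PP^2$ do not exist for $r\in\{25,32,33\}$, which is a warning that ``nice'' degenerations are scarcer here than for the classical conjectures. Instead, the paper's engine for the bulk of \autoref{thm:main} is brute force: explicit random configurations over a finite field, with the inequality $\dim_\KK I_t\geq\dim_{\FF_p}(I_{\FF_p})_t$ upgrading the finite-field computation to characteristic $0$, combined with the a priori gap bound of \autoref{cor:boundgapgeneral} to know how far to check. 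Second, for $r=r_{d,\min}$ in $\PP^2$ the paper does not go through a witness at all but argues directly for general $Z$ via the Hilbert--Burch matrix and an $\frakS_r$-monodromy contradiction (\autoref{thm:main3.3}), a technique absent from your outline. With these adjustments your proposal is an accurate description of what is provable and how, but, like the paper, it does not constitute a proof of the conjecture itself.
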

The heuristic motivation for this conjecture is that, generically, the equations of degree $d$ of a set of points are \emph{as independent as possible}. More precisely, their syzygy modules are generated by the Koszul syzygies, as long as the upper bound $h_{I_{\langle d \rangle}} \leq  h_{I(Z)}$ allows for it.

Our contribution is a proof of \autoref{conj:chopped_hilbert_function_intro} for many small values of $n, r$, and in several infinite families of pairs $(n,r)$. 
\begin{theorem} \label{thm:main}
\autoref{conj:chopped_hilbert_function_intro} is true in the following cases:
\begin{itemize}
 \item \autoref{thm: rmax in general}: $r_{d,\max} = h_S(d) - (n+1)$ for all $d$ in all dimensions $n$;
 \item \autoref{thm:main3.3}: $r_{d,\min} = \frac{1}{2}(d+1)^2$ when $d$ is odd, in the case $n=2$;
 \item \autoref{lem:interesting_range}: $r \leq \frac{1}{n}\big((n+1)h_S(d) - h_S(d+1)\big)$ and $n \leq 4$ and more generally whenever the Ideal Generation Conjecture holds;
 \item \autoref{thm:computeralgebra}: In a large number of individual cases in low dimension:
 \[
\begin{array}{c|ccccccccc}
n  & 2 & 3  & 4 & 5 & 6 & 7 & 8 & 9 & 10 \\ \hline
r  &  \leq 2343 & \leq 2296 & \leq 1815 & \leq 1272 & \leq 908 & \leq 767 & \leq 479 & \leq 207 & \leq 267 
\end{array}
 \]
\end{itemize}
\end{theorem}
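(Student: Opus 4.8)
The plan is to establish the four constituent statements separately, exploiting a common reduction. Throughout, write $m=h_S(d)-r$ for the expected number of degree-$d$ minimal generators of $I(Z)$ and fix a basis $g_1,\dots,g_m$ of $I(Z)_d$, so that $I_{\langle d\rangle}=(g_1,\dots,g_m)$ and $h_{I_{\langle d\rangle}}(d+e)=\dim\big(S_e\cdot I(Z)_d\big)$. Two upper bounds are available for a general $Z$: from $I_{\langle d\rangle}\subseteq I(Z)$ and \eqref{eq:genericHF} one gets $h_{I_{\langle d\rangle}}(d+e)\le h_S(d+e)-r$, which is the second branch of \eqref{eq:expectedhf}; and since the multiplication $S_e\otimes I(Z)_d\to S_{d+e}$ always annihilates the Koszul relations $g_ig_j-g_jg_i$, one gets $h_{I_{\langle d\rangle}}(d+e)\le m\,h_S(e)$ for $d\le d+e<2d$, which coincides with the first branch of \eqref{eq:expectedhf} whenever $e<d$. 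So on the locus where $Z$ imposes independent conditions, proving \eqref{eq:expectedhf} amounts to a matching lower bound --- equivalently, to the syzygies of $g_1,\dots,g_m$ being generated by Koszul syzygies in the relevant degrees, a statement of Minimal Resolution Conjecture type --- and since the rank of $S_e\otimes I(Z)_d\to S_{d+e}$ is lower semicontinuous, it is enough to exhibit a single explicit configuration of $r$ points realizing \eqref{eq:expectedhf}.

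For $r_{d,\max}=h_S(d)-(n+1)$ (\autoref{thm: rmax in general}), so $m=n+1$, the idea is to use liaison. A generic subsequence $g_1,\dots,g_n$ of $I(Z)_d$ is a regular sequence cutting out a reduced complete intersection $X=Z\sqcup Z'$ of $d^n$ points; modding out by it identifies $S/I_{\langle d\rangle}=A/g_{n+1}A$, where $A=S/(g_1,\dots,g_n)$ is the homogeneous coordinate ring of $X$ and $\Ann_A(g_{n+1})$ is the ideal of the residual scheme $Z'$ inside $A$. This yields the clean formula $h_{I_{\langle d\rangle}}(d+e)=\sum_{k\ge1}(-1)^{k+1}\binom{n}{k}h_S(d+e-kd)+h_{S/I(Z')}(e)$, and comparison with \eqref{eq:expectedhf} reduces the whole case to an identity about the Hilbert function of the residual of a general set of points under a general complete intersection, which follows from the liaison Hilbert-function formula. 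For $n=2$ and $r=r_{d,\min}=\tfrac12(d+1)^2$ with $d$ odd (\autoref{thm:main3.3}), the same linkage through two general degree-$d$ curves applies; now $S/I_{\langle d\rangle}=A/(g_3,\dots,g_m)A$, and the images of $g_3,\dots,g_m$ in the coordinate ring $A$ of the complete intersection, together with the classically known minimal resolution of a general set of points in $\PP^2$ and the liaison correspondence with $Z'$, pin down the Hilbert function and give the exact saturation gap.

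In the interesting range $r\le\tfrac1n\big((n+1)h_S(d)-h_S(d+1)\big)$ (\autoref{lem:interesting_range}) the hypothesis rearranges to $(n+1)h_{I(Z)}(d)\ge h_{I(Z)}(d+1)$, so the Koszul bound at $e=1$ already equals the cap $h_S(d+1)-r$; hence $e_0=1$ and \eqref{eq:expectedhf} collapses to the single equality $\dim\big(S_1\cdot I(Z)_d\big)=h_S(d+1)-r$, that is, to surjectivity of $S_1\otimes I(Z)_d\to I(Z)_{d+1}$ for a general $Z$. This is precisely the Ideal Generation Conjecture, which is known unconditionally for $n\le 4$. For the tabulated low-dimensional cases (\autoref{thm:computeralgebra}) the plan is to run the semicontinuity reduction by machine: for each pair $(n,r)$, pick $r$ pseudorandom points over a finite field $\FF_p$, assemble the relevant Macaulay matrices in degrees $d,\dots,d+e_0$, and verify by sparse linear algebra that their ranks realize the right-hand side of \eqref{eq:expectedhf}; a spreading-out argument over $\Spec\ZZ$ then transports a positive answer in large characteristic to the characteristic-$0$ statement.

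The hard part will be the liaison input in dimension $n\ge 3$: unlike in $\PP^2$, where the minimal resolution of the residual $Z'$ is pinned down by the structure theory of points in the plane, in higher dimension $Z'$ is generally non-reduced and its homological invariants are not forced by numerical data, so the linkage computation only closes when $Z'$ lands in an already-understood range or when extra syzygy vanishing can be supplied --- which is exactly why \autoref{thm:main3.3} is restricted to the plane and \autoref{lem:interesting_range} relies on the Ideal Generation Conjecture. A secondary, computational difficulty is that the Macaulay matrices behind \autoref{thm:computeralgebra} grow rapidly toward the extreme entries of the table, so the verification requires an efficient sparse-linear-algebra implementation and a judiciously chosen point configuration.
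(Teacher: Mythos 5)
Your treatments of the first, third and fourth bullets are essentially the paper's own arguments: the $r_{d,\max}$ case via linkage through a complete intersection $K=Z\sqcup Z'$ of $n$ general degree-$d$ forms and the liaison first-difference formula (your packaging via $S/I_{\langle d\rangle}=A/g_{n+1}A$ and $\Ann_A(g_{n+1})=I_A(Z')$ is an equivalent reformulation of the paper's syzygy-projection argument, and it does reduce to $I(Z')_e=I(K)_e$ for $e\leq (n-1)d-(n+1)$, which is exactly what \autoref{prop: DhZ v DhZ'} gives); the interesting-range case reduces, as you say, to surjectivity of $\mu_1$, i.e.\ to the IGC, known for $n\leq 4$; and the tabulated cases go by semicontinuity plus reduction mod $p$ on explicit point configurations, as in \autoref{prop: semicontinuity multiplication map}.

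The genuine gap is in the second bullet, $r=r_{d,\min}$ with $d=2\delta+1$ odd. Here $I(Z)_d$ has $\delta+1$ generators $f_0,\dots,f_\delta$, and the claim to prove is that they have exactly $\delta-2$ quadratic syzygies. Your proposed route --- link through two of the forms, pass to $A=S/(g_1,g_2)$, and let the known minimal resolution of general points in $\PP^2$ plus liaison ``pin down the Hilbert function'' --- does not close. With more than one generator outside the complete intersection, evaluating a syzygy $\sum_i s_if_i=0$ at a point $p\in Z'$ only yields one linear relation among the values $s_i(p)f_i(p)$, not the individual memberships $s_i\in I(Z')$, so the projection trick that works for $r_{d,\max}$ (where exactly one generator lies outside the CI) breaks down. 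Nor do the Betti numbers of $I(Z)$ (\autoref{prop: MRC in P2}, known in $\PP^2$) determine the answer: by Hilbert--Burch, $\dim_\KK\Syz(f_0,\dots,f_\delta)_2=(\delta+1)-\dim_\KK L_Z$, where $L_Z\subseteq S_1$ is the span of the linear entries in the last row of the Hilbert--Burch matrix, and the graded Betti numbers are compatible with either $\dim L_Z=2$ or $\dim L_Z=3$. The actual content of \autoref{thm:main3.3} is the proof that $\dim L_Z=3$ for general $Z$, which the paper obtains by a monodromy-type argument: if $\dim L_Z\leq 2$ then $\rmV(L_Z)$ is a point of $Z$, giving an $\frakS_r$-invariant rational map $\psi\colon(\bbP^2)^r\dashto\bbP^2$ with $\psi(Z)\in Z$, and irreducibility of $(\bbP^2)^r$ together with $\frakS_r$-invariance rules this out. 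Some argument of this kind (or another source of the vanishing) must be supplied; as written, your plan for this bullet assumes precisely what needs to be proved.
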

We discuss the role of the Ideal Generation Conjecture mentioned in \autoref{thm:main} in \autoref{sec:2}. We propose a second conjecture, implied by \autoref{conj:chopped_hilbert_function_intro}, which pertains to question \eqref{eq:ourquestion}. For $Z \in (\mathbb{P}^n)^r$, let $I_{\langle d \rangle} = \langle I(Z)_d\rangle_S$ and define $\gamma_n(d,Z) \coloneqq \min \set{ e \in \mathbb{Z}_{>0} | h_{S/I_{\langle d \rangle}}(d+e) = r}$.
\begin{conjecture}[Saturation Gap Conjecture] \label{conj:main}
Let $n,d,r\in\ZZ_{>0}$ be integers with $r < h_S(d)-n$. For general $Z$, the value $\gamma_n(d,Z) = \gamma_n(d,r)$ only depends on $n,d,r$ and it is given explicitly~by 
\[
\gamma_n(d,r) \,=\, \min \Set{ e \in \mathbb{Z}_{>0} | h_S(d+e) - r \leq \sum_{k = 1}^{n-3} (-1)^{k+1}  h_S(d+e - kd) \binom{h_S(d) - r}{k} }.
\]
\end{conjecture}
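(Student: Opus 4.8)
The plan is to derive \autoref{conj:main} from the Expected Syzygy Conjecture (\autoref{conj:chopped_hilbert_function_intro}) together with a semicontinuity argument; once carried out this would prove \autoref{conj:main} in every case where \autoref{conj:chopped_hilbert_function_intro} is known, and in particular in all cases of \autoref{thm:main}. Throughout, let $d$ be the smallest integer with $h_S(d)\geq r$, set $m=h_S(d)-r$ (so $m\geq n+1$, since the hypothesis $r<h_S(d)-n$ says exactly $m>n$), and abbreviate $P(e)=\sum_{k\geq 1}(-1)^{k+1}h_S(d+e-kd)\binom{m}{k}$; this is a finite alternating sum, its $k$-th term vanishing once $k>m$ or $d+e-kd<0$.

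First I would settle the assertion that $\gamma_n(d,Z)$ depends only on $n,d,r$ for general $Z$. On the dense open locus of $(\PP^n)^r$ where $\dim I(Z)_d=m$, the spaces $I(Z)_d$ form a rank-$m$ subbundle of the trivial bundle with fiber $S_d$, and $(I_{\langle d\rangle})_{d+e}=S_e\cdot I(Z)_d$ is the image of the induced morphism of vector bundles $S_e\otimes I(Z)_d\to S_{d+e}$; the rank of this morphism is lower semicontinuous in $Z$, hence $h_{S/I_{\langle d\rangle}}(d+e)=h_S(d+e)-\dim(I_{\langle d\rangle})_{d+e}$ is upper semicontinuous. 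So there is a dense open $U\subseteq(\PP^n)^r$ on which $h_{S/I_{\langle d\rangle}}(d+e)$ simultaneously attains its minimal value, call it $g(e)$, for all relevant $e$; for $Z\in U$ this gives $\gamma_n(d,Z)=\min\{e\in\ZZ_{>0}:g(e)=r\}=:\gamma_n(d,r)$. The minimum is finite because \autoref{thm:trisec} gives $(I_{\langle d\rangle})^{\mathrm{sat}}=I(Z)$ for general $Z$ when $r<h_S(d)-n$, so $g(e)=h_{S/I(Z)}(d+e)=r$ for $e\gg 0$.

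Next, assuming \autoref{conj:chopped_hilbert_function_intro} for the pair $(n,r)$, I would identify $\gamma_n(d,r)$ with the threshold $e_0$ appearing there. Formula \eqref{eq:expectedhf} gives $g(e)=h_S(d+e)-h_{I_{\langle d\rangle}}(d+e)$, which equals $h_S(d+e)-P(e)$ for $e<e_0$ and equals $r$ for $e\geq e_0$, where $e_0=\min\{e\in\ZZ_{>0}:P(e)\geq h_S(d+e)-r\}$; by minimality of $e_0$, the former quantity is strictly larger than $r$ whenever $0<e<e_0$. Hence $\{e\in\ZZ_{>0}:g(e)=r\}=\ZZ_{\geq e_0}$, so $\gamma_n(d,r)=e_0$. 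The only remaining point is to verify that the finite alternating sum displayed in \autoref{conj:main} computes precisely this threshold $e_0$; since the $k$-th term of $P(e)$ vanishes once $d+e-kd<0$, this reduces to an a priori bound on the number of degrees over which $h_{S/I_{\langle d\rangle}}$ can exceed $r$, in terms of $d$ and $n$.

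That last reduction is the step I expect to be the main obstacle, as it concerns the Castelnuovo--Mumford regularity of $I_{\langle d\rangle}$, not merely its Hilbert function, and so is not a formal consequence of \autoref{conj:chopped_hilbert_function_intro}. I would attack it with the structural input behind the proven cases of \autoref{thm:main} --- a comparison of the minimal free resolution of $I_{\langle d\rangle}$ with the Koszul complex on $m$ generic forms of degree $d$ --- together with the elementary estimate $P(e)\geq m\,h_S(e)-\binom{m}{2}h_S(e-d)$, which forces $P(e)$ to overtake $h_S(d+e)-r$ after only a bounded number of blocks $e\in[jd,(j{+}1)d)$; this bound grows mildly with $n$ and is worst precisely when $m$ is close to $n+1$, i.e.\ for $r$ near $r_{d,\max}$. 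Pinning down that bound, and verifying the small values of $n$ and $m$ directly, is where the real work lies.
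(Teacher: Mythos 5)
You are right that, since this statement is a conjecture, the paper offers no unconditional proof; its "proof content" is exactly the reduction you describe, and your first two steps coincide with the paper's: your bundle-rank semicontinuity argument is \autoref{prop: semicontinuity multiplication map} (the paper phrases it as a degeneracy locus of a section of a bundle pulled back along the chopping map, and also uses \autoref{thm:trisec} for finiteness of the gap), and the identification $\gamma_n(d,r)=e_0$ under \autoref{conj:chopped_hilbert_function_intro} is precisely how the paper regards \autoref{conj:main} as implied by the Expected Syzygy Conjecture, hence proved in the cases of \autoref{thm:main}.

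The genuine gap is in the one ingredient you defer, which the paper in fact proves unconditionally: the a priori bound on the saturation gap that makes the displayed sum finite and lets the full and truncated thresholds coincide. The paper establishes $\gamma_n(d,r)\leq (n-1)d-(n+1)$ for all $r<h_S(d)-n$ in \autoref{cor:boundgapgeneral}, by first settling the extremal case $r=h_S(d)-(n+1)$ via liaison (\autoref{thm: rmax in general}: link $Z$ inside a complete intersection of $n$ general degree-$d$ forms of $I(Z)_d$, compute the $h$-vector of the residual set via \autoref{prop: DhZ v DhZ'}, and bound syzygies by projecting onto one coordinate), and then a reverse induction on $r$ showing that removing a general point can only shrink the gap (multiply a form $f\in I(Z_{r-1})_d\setminus I(Z_r)_d$ by an $h\in S_{e_0}$ not vanishing at the removed point). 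Your proposed substitute --- a resolution comparison with a Koszul complex plus the estimate $P(e)\geq m\,h_S(e)-\binom{m}{2}h_S(e-d)$ --- is not carried out, and that estimate is unjustified as stated: the tail of $P(e)$ is an alternating sum whose binomial coefficients $\binom{m}{k}$ grow, so truncating after a negative term is not automatically a lower bound. Two smaller points: under \autoref{conj:chopped_hilbert_function_intro} the whole Hilbert function of $S/I_{\langle d\rangle}$ is determined, so what remains is either a purely numerical threshold inequality or the geometric bound above --- it is not an extra Castelnuovo--Mumford regularity input beyond the Hilbert function (compare \autoref{thm:regCM}, which ties $\regCM$ to $\regH$ here); and the upper summation limit $n-3$ printed in \autoref{conj:main} should be read against \autoref{cor:boundgapgeneral}, which yields $n-1$ surviving terms, so your treatment of "the terms that vanish at the threshold" is the right substance even though you never pin down the limit.
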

The fact that the \emph{general} value $\gamma_n(d,r)$ in \autoref{conj:main} only depends on $n,d,r$ is a consequence of a standard semicontinuity argument, see \autoref{prop: semicontinuity multiplication map}. We call this quantity the \emph{saturation gap}. It measures the gap between degrees at which the chopped ideal $I_{\langle d \rangle}$ agrees with its saturation $(I_{\langle d \rangle})^{\rm sat} = I(Z)$. \autoref{thm:main} guarantees that \autoref{conj:main} holds in all listed cases. Moreover, \autoref{cor:boundgapgeneral} provides an upper bound for $\gamma_{n}(d,r)$ for every $n,d,r$. 

The paper is organized as follows. \autoref{sec:2} sets the stage for the study of chopped ideals. It proves some preliminary results and explains the relations to other classical conjectures in commutative algebra and algebraic geometry. \autoref{sec:3} is devoted to the case of points in the projective plane. It includes a detailed explanation of the case of $18$ points in $\bbP^2$, the first non-trivial case, and two results solving \autoref{conj:chopped_hilbert_function_intro} in extremal cases. \autoref{sec: bound rmax general} concerns the proof of \autoref{conj:chopped_hilbert_function_intro} for the largest possible number of points $r = h_S(d) - (n+1)$ for given $n,d$. Moreover, we provide an upper bound for the saturation gap for any number of points. \autoref{sec:5} contains a computational proof for the remaining cases in \autoref{thm:main}. Finally, \autoref{sec:6} discusses symmetric tensor decomposition and its relation to \autoref{conj:main}. The computations in the final two sections use \texttt{Macaulay2} \cite{M2} and \texttt{Julia}; the code to replicate the computations is available online at \url{https://mathrepo.mis.mpg.de/ChoppedIdeals/}.

\section{Chopped ideals} \label{sec:2}

\begin{definition}[Chopped ideal]
Let $I\subseteq S$ be a homogeneous ideal and $d\geq 0$. The \emph{chopped ideal} in degree $d$ associated to $I$ is $I_{\langle d \rangle} \coloneqq \langle I_d \rangle_S$.
\end{definition}

The notation $I_{\langle d \rangle}$ goes back to \cite{HerzogHibi1999Componentwise}, where the authors study ideals whose chopped ideals have linear resolutions.

\subsection{The chopping map}
Denote by $U_{\genHF}\subseteq (\PP^n)^r$ the dense Zariski open subset of $(\PP^n)^r$ consisting of $r$-tuples satisfying \eqref{eq:genericHF}. We focus on the chopped ideals $I_{\langle d \rangle}$ where $I = I(Z)$ for some $Z \in U_{\genHF}$. Moreover, we are interested in the case where $Z$ can be computed from its chopped ideal. To this end, we determine the values of $r$ for which $I(Z)$ and $I_{\langle d \rangle}$ define the same subscheme of $\PP^n$. Given a set of homogeneous polynomials $J \subseteq S$, let $\rmV(J) \subseteq \bbP^n$ denote the subscheme of $\bbP^n$ that they define.

\begin{theorem} \label{thm:trisec}
Let $d\geq 1$, let $Z$ be a general set of $r \leq h_S(d)$ points in $\bbP^n$ and $I_{\langle d \rangle } \coloneqq I(Z)_{\langle d \rangle}$. 
\begin{enumerate}[label=\textup{(\roman*)}]
	\item If $r = h_S(d) - n$, then $\rmV(I_{\langle d \rangle})$ is a set of $d^n$ reduced points.
	\item If $r \geq h_S(d)-n$, then $\rmV(I_{\langle d \rangle})$ is a complete intersection of codimension $h_S(d)-r$.
	\item If $r<h_S(d)-n$, then $\rmV(I_{\langle d \rangle})$ is the reduced scheme $Z$.
\end{enumerate}
\end{theorem}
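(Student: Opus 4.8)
\emph{Setup.} The plan is to treat all three parts uniformly in terms of the linear subspace $I(Z)_d\subseteq S_d$. Assuming $r\le h_S(d)$ (the only case in which the statement makes sense), for general $Z$ one has $\dim_\KK I(Z)_d=c$ where $c\coloneqq h_S(d)-r$, so $I_{\langle d\rangle}$ is generated by a $c$-dimensional space of forms of degree $d$ and $\rmV(I_{\langle d\rangle})$ is the base scheme of the linear system $\lvert I(Z)_d\rvert$. The assignment $Z\mapsto I(Z)_d=\ker\bigl(S_d\to\bigoplus_i\KK_{z_i}\bigr)$ is a morphism $\varphi\colon U_{\genHF}\to\Gr(c,S_d)$, since the evaluation map has constant rank $r$ on $U_{\genHF}$. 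Each property we want is encoded by an open condition — on $\Gr(c,S_d)$, or on the length of a fibre of the resulting flat family of finite schemes — so by irreducibility of $U_{\genHF}$ it suffices to verify each at a single favourable configuration.

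\emph{Parts (i) and (ii).} Here $c\le n$. In $S=\KK[x_0,\dots,x_n]$, of Krull dimension $n+1$, any $c\le n+1$ general forms of a fixed positive degree form a regular sequence; hence for generic $\varphi(Z)$ the chopped ideal is a complete intersection of codimension $c$, so $\rmV(I_{\langle d\rangle})$ has dimension $n-c=r+n-h_S(d)$, which is (ii). The required genericity of $\varphi(Z)$ is witnessed by taking $Z$ to be $r$ general points on the complete intersection $Y=\rmV(g_1,\dots,g_c)$ of $c$ general degree-$d$ forms: a count of conditions gives $I(Z)_d=\langle g_1,\dots,g_c\rangle_d$, whence $\rmV(I_{\langle d\rangle})=Y$. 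For (i) one specialises $c=n$: then $\deg Y=d^n$ by Bézout, and the general complete intersection of $n$ forms of degree $d$ in $\PP^n$ is smooth by Bertini, so $Y$ is $d^n$ distinct reduced points; semicontinuity of the number of points in the flat family over $U_{\genHF}$ propagates this to general $Z$.

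\emph{Part (iii).} Now $c>n$. First I would establish that $\rmV(I_{\langle d\rangle})$ is finite by enlarging $Z$: since $r<h_S(d)-n$, extend $Z$ to a general set $Z''\supseteq Z$ of exactly $h_S(d)-n$ points. By part (i), $X\coloneqq\rmV(I(Z'')_d)$ is a complete intersection of $n$ forms of degree $d$, equal to a set of $d^n$ reduced points, and $Z\subseteq Z''\subseteq X$. From $I(Z'')_d\subseteq I(Z)_d$ we get $\rmV(I_{\langle d\rangle})\subseteq X$, so $\rmV(I_{\langle d\rangle})$ is finite and, being a closed subscheme of the reduced scheme $X$, reduced; thus it equals $Z$ together with some subset of the remaining $d^n-r$ points. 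By semicontinuity of the length of $\rmV(I(Z)_d)$ over $U_{\genHF}$ it is enough to exhibit a single $Z$ for which no extra point occurs. For this I would use liaison in $X$: the complete intersection $X$ is arithmetically Gorenstein, $Z$ is linked through $X$ to the residual subscheme $W=X\setminus Z$ of degree $d^n-r$, and the mapping-cone duality identifies $(I_Z/I_X)_t$ with the $\KK$-dual of $(I_W/I_X)_{\sigma-t}$ for $\sigma=n(d-1)$. Reading this in degree $t=d$ shows that $p\in X\setminus Z$ lies in $\rmV(I_{\langle d\rangle})$ if and only if $p$ is a base point, modulo $I_X$, of the forms of degree $\sigma-d=(n-1)d-n$ through $W\setminus p$; for $Z$ — hence $W$ — general this does not occur, the point being the numerical inequality $\deg W=d^n-r\le d^n-h_S(d-1)=\dim_\KK(S/I_X)_{\sigma-d}$, which holds because $d$ is the least degree with $h_S(d)\ge r$ and so $h_S(d-1)<r$. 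Semicontinuity then gives (iii) for general $Z$.

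\emph{Main obstacle.} The heart of the argument is part (iii), and within it the exclusion of the residual base points — equivalently, that a general set of $r<h_S(d)-n$ points is cut out scheme-theoretically (already set-theoretically) by its forms of degree $d$. The inclusion $\rmV(I_{\langle d\rangle})\subseteq X$ reduces this to finitely many points, but controlling them needs genuine input: either the liaison / Cayley--Bacharach analysis sketched above, whose core is the Hilbert-function symmetry of the auxiliary complete intersection $X$, or a Horace-type degeneration avoiding linkage. A smaller technical point underlies (i) and (ii): the morphism $\varphi$ need not be dominant when $c=n$, so one must check directly that $r$ general points of a general complete intersection of $n$ forms of degree $d$ impose independent conditions on $S_d$ — a general-position statement about evaluating degree-$d$ forms at the $d^n$ points of the complete intersection.
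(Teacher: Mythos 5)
Your parts (i)--(ii) follow essentially the paper's route (exhibit one good configuration, then use openness/semicontinuity), but note that the ingredient you defer -- that $r$ general points of a general complete intersection of $c\leq n$ forms of degree $d$ impose independent conditions on $S_d$, equivalently that a general codimension-$n$ linear section of the Veronese variety is a nondegenerate set of $d^n$ reduced points -- is not a small afterthought: it is exactly the general-position input the paper takes from \cite[Prop.~18.10]{Harris1992}. For $c<n$ it can be handled by irreducibility of the complete intersection (add points one at a time), but for $c=n$ the witness set is finite and you genuinely need the nondegeneracy statement, so as written (i) is incomplete.

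The serious gap is in part (iii). The reduction to finitely many candidate extra points via $Z\subseteq Z''$ and part (i) is fine, but the exclusion step does not work. The liaison/Cayley--Bacharach duality for $Z$ and $W$ linked by a complete intersection $X$ of $n$ forms of degree $d$ reads $\deg Z - h_{S/I(Z)}(t) = \dim_\KK\bigl(I(W)/I(X)\bigr)_{\sigma-t}$ with $\sigma = nd-n-1$ (not $n(d-1)$), and unwinding it at $t=d$ gives the \emph{opposite} of your criterion: a point $p\in W$ lies in $\rmV(I_{\langle d\rangle})$ if and only if there \emph{exists} a form of degree $\sigma-d$ vanishing on $W\setminus\{p\}$ but not at $p$ (check against the classical Cayley--Bacharach situation of $8+1$ points on two cubics: the ninth point is a base point of the cubics through the eight precisely because the constants separate it from $\emptyset$). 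So to rule out extra points you must show every $p\in W$ is a base point of the degree-$(\sigma-d)$ forms through $W\setminus\{p\}$, i.e.\ that $W$ is a Cayley--Bacharach scheme in degree $\sigma-d$ -- the reverse of what your numerical inequality is aimed at. Moreover that inequality cannot be used as intended: $W$ is not a general set of points, and the same duality shows its failure to impose independent conditions in degree $\sigma-d$ equals $\dim I(Z)_d-\dim I(X)_d = (h_S(d)-r)-n>0$, so ``$\deg W\leq\dim(S/I(X))_{\sigma-d}$ hence independent conditions'' is false in the relevant range; and if it were true it would say every $p\in W$ \emph{is} an extra base point, forcing $I(Z)_d\subseteq I(X)_d$, which is absurd. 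Proving the needed Cayley--Bacharach property of $W$ by liaison just translates back into the statement being proved, so the argument is circular without new geometric input. The paper supplies that input differently: it applies the Multisecant Lemma \cite[Prop.~1.4.3]{Russo2016:ProjGeo} to the Veronese variety, so that the general $r$-secant plane $\langle\nu_d(Z)\rangle$ meets $\nu_d(\PP^n)$ only in $\nu_d(Z)$ when $r<h_S(d)-n$, which gives $\rmV(I_{\langle d\rangle})=Z$ directly and avoids any analysis of the residual set.
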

For the proof, we consider a geometric interpretation of the operation of \emph{chopping} an ideal.
\begin{definition}[Chopping map $\cmap$]\label{def:chopping_map}
For given $d$ with $h_S(d)\geq r$, the \emph{chopping map} is
\[
\cmap \colon U_{\text{genHF}} \to \Gr(h_S(d) - r, S_d), \qquad Z \mapsto [I(Z)_d].
\]
\end{definition}
The chopping map is a morphism of varieties. In fact, there is a commutative diagram involving the Veronese embedding $\nu_d\colon \PP^n \hookrightarrow \PP(S_d^\vee)$
\[\begin{tikzcd}
	{\makebox[0pt][r]{$\PP(S_d^\vee)^r \supseteq$\;}(V_{d,n})^r} & {\Gr(r,S_d^\vee)} \\
	\makebox[0pt][r]{$(\PP^n)^r \supseteq$\;} U_{\text{genHF}} & {\Gr(h_S(d)-r,S_d).}
	\arrow["\cong", from=1-2, to=2-2]
	\arrow["\rm span", dashed, from=1-1, to=1-2]
	\arrow["{(\nu_d)^{\times r}}", hook, from=2-1, to=1-1]
	\arrow["\cmap", from=2-1, to=2-2]
\end{tikzcd}\]
For a linear space $T \subseteq S_d$, the scheme $\rmV(T) \subseteq \PP^n$ is the intersection $\nu_d(\bbP^n) \cap \PP(T^\perp) \subseteq \PP(S_d^\vee)$ under the identification induced by the Veronese embedding, see, e.g. \cite[Prop.~4.4.1.1]{Lan:TensorBook}. Notice that $\frakc$ is invariant under permutation of the factors of $(\bbP^{n})^{r}$, therefore it induces a map on the quotient $\tilde{\frakc}\colon U_{\genHF} / \frakS_{r} \to  \Gr(h_S(d)-r,S_d)$.

\begin{theorem}[Geometry of the chopping map]\label{thm:chop_map}
Let $r,n,d>0$ be integers with $r \leq h_S(d)$.
\begin{enumerate}[label=\textup{(\roman*)}]
\item If $r \geq  h_S(d)-n$, then $\cmap$ is dominant, with general fiber of dimension $nr - r(h_S(d)-r)$.
\item If $r \leq h_S(d)-n$, then $\cmap$ is generically finite. More precisely, the induced map $\tilde{\frakc}$ has degree $\binom{d^n}{r}$ if $r = h_S(d)-n$, it is generically injective otherwise.
\end{enumerate}
\end{theorem}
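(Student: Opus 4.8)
The plan is to leverage the commutative diagram relating $\cmap$ to the Veronese embedding, and then analyze the two cases by a combination of dimension counting and degeneracy of the Veronese variety.

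For part (i), when $r \geq h_S(d) - n$, I would first establish dominance. The target $\Gr(h_S(d)-r, S_d)$ has dimension $(h_S(d)-r) \cdot r$, while the source has dimension $nr$; the hypothesis $r \geq h_S(d)-n$ is exactly the inequality $nr \geq (h_S(d)-r)r$ (after dividing by $r$), so a dominant map is dimensionally possible. To prove it is actually dominant, I would exhibit one point $Z$ where the differential of $\cmap$ is surjective; equivalently, by a tangent-space computation, I would show that for a general choice of $r$ points the linear conditions imposed on $S_d$ are "as independent as possible" and that the span of the Veronese images behaves generically — this is essentially the statement that general points impose independent conditions, i.e. $Z \in U_{\genHF}$, combined with a verification that the tangent map to the Grassmannian is onto. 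The fiber dimension $nr - r(h_S(d)-r)$ then follows from the theorem on dimension of fibers of a dominant morphism between irreducible varieties.

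For part (ii), when $r \leq h_S(d)-n$, generic finiteness again follows from the dimension count (now $nr \leq r(h_S(d)-r)$ forces general fibers to be finite, once we know $\cmap$ is generically finite onto its image, which follows from injectivity of the differential at a general point — or, more cleanly, from part (iii) of \autoref{thm:trisec}, since if $\rmV(I_{\langle d\rangle})$ recovers the reduced scheme $Z$ then $Z$ is determined by its image up to reordering, so $\tilde\cmap$ is generically injective, handling the case $r < h_S(d)-n$). For the boundary case $r = h_S(d)-n$, I would use part (i) of \autoref{thm:trisec}: a general $T = I(Z)_d$ in the image cuts out $d^n$ reduced points on the Veronese variety via $\nu_d(\PP^n) \cap \PP(T^\perp)$ (a transverse complete intersection, by Bézout on $\PP^n$, of $n$ general hypersurfaces of degree $d$). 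Since the fiber of $\tilde\cmap$ over $[T]$ consists of all $r$-subsets of $\rmV(T)$ whose vanishing ideal in degree $d$ is again $T$ — and for general $T$ every $r$-subset of the $d^n$ points works, because $r = h_S(d)-n$ general points among the $d^n$ impose independent conditions and their degree-$d$ part is forced to contain, hence equal, $T$ — the fiber is exactly $\binom{d^n}{r}$ points.

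The main obstacle I anticipate is the boundary case $r = h_S(d)-n$: one must argue carefully that for a \emph{general} complete intersection $T$ of $n$ degree-$d$ forms, \emph{every} choice of $r$ of its $d^n$ zeros has degree-$d$ vanishing component equal to $T$ (not larger), which requires that those $r$ points of the $d^n$ impose independent conditions in degree $d$. This is a nontrivial genericity statement about subsets of a complete intersection, not just about general points of $\PP^n$; I would handle it by a semicontinuity/specialization argument, producing one explicit complete intersection (e.g. built from products of linear forms, giving a grid of points) for which the claim is transparent, and then invoking openness. The dominance in part (i) and the injectivity of the differential in part (ii) are more routine tangent-space calculations by comparison.
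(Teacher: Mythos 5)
Your overall structure matches the paper's: reduce everything to the geometry of the plane section $\nu_d(\PP^n)\cap\PP(\Lambda)$, get dominance and the fiber dimension in case (i) by a dimension count, get generic injectivity for $r<h_S(d)-n$ from \autoref{thm:trisec}(iii), and compute the degree $\binom{d^n}{r}$ at $r=h_S(d)-n$ by counting $r$-subsets of the $d^n$ intersection points. You also correctly isolate the crux: one must know that \emph{every} $r$-subset of the $d^n$ points has degree-$d$ vanishing component exactly $T$.

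However, your proposed resolution of that crux has a genuine gap. You suggest verifying the property on one explicit complete intersection, namely a grid cut out by products of linear forms, and then invoking openness. The grid does \emph{not} have the required property, so it cannot serve as the witness. Concretely, take $n=2$ and $d\geq 4$, so $K$ is a $d\times d$ grid of $d^2$ points and $r=h_S(d)-2$; the complement of an $r$-subset $Z'$ in $K$ is a set $Z''$ of $\binom{d-1}{2}$ grid points, and by liaison/Cayley--Bacharach one has $\dim I(Z')_d-\dim I(K)_d=\dim I(Z'')_{d-3}$. Choosing $Z''$ to consist of grid points lying on a union of $d-3$ grid lines (e.g.\ three collinear grid points when $d=4$, or six grid points on two lines when $d=5$) gives $I(Z'')_{d-3}\neq 0$, hence $I(Z')_d\supsetneq T$ and $Z'$ does not impose independent conditions in degree $d$. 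So the property ``every $r$-subset recovers $T$'' fails at your witness, and semicontinuity gives you nothing; indeed the property is exactly a uniform-position statement, which special configurations like grids violate. The paper's proof handles this by citing the General Position Theorem for a general linear section of the irreducible nondegenerate Veronese variety (via monodromy, char $0$): the $d^n$ points of a general $\PP(\Lambda)$-section are in linearly general position, so any $r=h_S(d)-n$ of them are linearly independent in $S_d^\vee$ and span $\Lambda$, which simultaneously gives $I(Z')_d=T$ and membership of $Z'$ in $U_{\genHF}$ (a point your count also quietly needs). Separately, your dominance argument in (i) via surjectivity of the differential is not as routine as claimed: it amounts to showing that one general double point together with $r-1$ general simple points imposes independent conditions in degree $d$; this is true and provable, but the paper sidesteps it entirely by producing $r$ points of $\nu_d(\PP^n)$ spanning a general $\Lambda$, which yields dominance directly.
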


\begin{proof}[Proof of \autoref{thm:trisec} and \autoref{thm:chop_map}]
First consider the case $r = h_S(d)-n = 1+\codim \nu_d(\bbP^n)$. A general linear space $\Lambda \in \Gr(r,S_d^\vee)$ intersects  $\nu_d(\bbP^n)$ in a non-degenerate set of reduced points \cite[Prop.~18.10]{Harris1992}. Picking $r$ points on $\nu_d(\bbP^n)$ spanning $\Lambda$, we see that the map $\cmap$ is dominant. Furthermore, $\Lambda \cap \nu_d(\bbP^n)$ consists of $\deg V_{d,n} = d^n$ reduced points. By genericity, any subset of $r$ points span $\Lambda$, hence $\tilde{\cmap}^{-1}(\Lambda^\perp)$ consists of $\binom{d^n}{r}$ points in $U_{\textup{genHF}} / \mathfrak{S}_r$.

Next, let $r > r'$ where $r' = h_S(d)-n$, and let $U' \subseteq (\PP^n)^{r'}$ be the open set from the previous case. For any set of $r$ points $Z\in U_{\textup{genHF}}$ and containing a subset $Z'$ belonging to $U'$, we must have $\dim {\rm V}(I(Z)_d) = \#(Z\setminus Z')$. Indeed, modulo $I(Z)_d$, there are $\#(Z\setminus Z')$ linearly independent elements in $I(Z')_d$: if $\dim {\rm V}(I(Z)_d) > \#(Z\setminus Z')$ then these additional equations could not cut out the $0$-dimensional set of points $Z'$, in contradiction with the previous part of the proof. Since $S$ is graded Cohen-Macaulay, this implies that a basis of $I(Z)_d$ is a regular sequence \cite[Thm.~17.4]{Matsu:CommutativeRingTheory}. This shows that for any generic enough $Z$, the variety ${\rm V}(I(Z)_d)$ is a complete intersection of dimension $r-(h_S(d)-n)$. Proving that the chopping map is dominant is done exactly as in the previous case, the claim about the fiber dimension is a dimension~count.

Finally, consider $r < h_S(d)-n$. We give a proof valid in characteristic $0$. The classical \emph{Multisecant Lemma} \cite[Prop.~1.4.3]{Russo2016:ProjGeo} states that a general $k$-secant plane to a non-degenerate projective variety $X \subseteq \PP^N$ is not a $k+1$-secant for $k< \codim X$. Applying this to the Veronese variety, for a general set of $r < h_S(d)-n$ points $Z$, we have
\[
\nu_d({\rmV}(I(Z)_d)) \,=\,  \nu_d(\bbP^n)  \cap \PP(I(Z)_d^\perp) \,=\,  \nu_d(\bbP^n) \cap \langle \nu_d(Z) \rangle_\PP  \,=\,  \nu_d(Z).
\]
This shows that $\rmV(I(Z)_d) = Z$ for general $Z$. In particular, $\Tilde{\cmap}$ is generically injective.
\end{proof}
This answers our question on when $Z$ can be recovered from the chopped ideal.
\begin{corollary}\label{thm:chopCutsZ}
With notation as before, for general $Z$ the following are equivalent:
\begin{enumerate}[label=\textup{(\roman*)}]
\item ${\rm V}(I(Z)_d) = Z$, as reduced schemes;
\item $(I_{\langle d \rangle})^{\rm sat} = I(Z)$, where $\cdot^{\rm sat}$ denotes saturation with respect to $\langle x_0, \dots, x_n \rangle_S$;
\item $r < h_S(d)-n$ or $r=1$ or $(n,r) = (2,4)$.
\end{enumerate}
\end{corollary}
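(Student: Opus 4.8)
The statement is essentially a bookkeeping consequence of \autoref{thm:trisec}: it records when the base locus of the degree-$d$ part of $I(Z)$ stops being the generic set of $d^n$ points and collapses onto the prescribed $r$ points. For the equivalence (i) $\Leftrightarrow$ (ii) — which holds for \emph{every} $Z$, with no genericity needed — I would invoke the standard dictionary between closed subschemes of $\PP^n$ and saturated homogeneous ideals. Since $I_{\langle d \rangle} = \langle I(Z)_d \rangle_S$, the subscheme $\rmV(I(Z)_d) = \rmV(I_{\langle d \rangle})$ equals $\Proj\bigl(S/(I_{\langle d \rangle})^{\mathrm{sat}}\bigr)$, and $(I_{\langle d \rangle})^{\mathrm{sat}}$ is the unique saturated homogeneous ideal cutting it out; on the other hand $I(Z)$ is radical, hence saturated (the vanishing ideal of a nonempty finite point set is saturated, by a one-line check), and it cuts out the reduced scheme $Z$. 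By that bijection, $\rmV(I(Z)_d) = Z$ as reduced schemes if and only if $(I_{\langle d \rangle})^{\mathrm{sat}} = I(Z)$.

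Next I would reduce (i) to an arithmetic condition. Every form in $I(Z)_d$ vanishes on $Z$, so $Z \subseteq \rmV(I(Z)_d)$ always, and (i) holds exactly when $\rmV(I(Z)_d)$ is a reduced set of precisely $r$ points. By \autoref{thm:trisec}, $\rmV(I(Z)_d)$ is a finite reduced point set if and only if $r \le h_S(d) - n$: it is $Z$ when $r < h_S(d) - n$, and a set of $d^n$ reduced points (containing $Z$) when $r = h_S(d) - n$; when $r > h_S(d) - n$ it is positive-dimensional — a complete intersection of dimension $r + n - h_S(d) \ge 1$, or all of $\PP^n$ once $I(Z)_d = 0$ — so (i) fails there. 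Hence, for general $Z$, condition (i) is equivalent to
\[
r < h_S(d) - n \qquad \text{or} \qquad \bigl(\, r = h_S(d) - n \ \text{ and }\ d^n = \binom{n+d}{n} - n \,\bigr).
\]

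It then remains to solve $d^n = \binom{n+d}{n} - n$. For $d = 1$ both sides equal $1$ for every $n$, and then $r = h_S(1) - n = 1$. For $n = 2$ the equation reads $d^2 = \tfrac12 d^2 + \tfrac32 d - 1$, i.e.\ $(d-1)(d-2) = 0$, contributing also $d = 2$ with $r = h_S(2) - 2 = 4$. No other solutions occur: for fixed $n \ge 2$ the right-hand side is a polynomial in $d$ of degree $n$ with leading coefficient $1/n! < 1$, hence eventually strictly below $d^n$, and checking the handful of small pairs (the equality already fails at $(n,d) = (2,3)$ and $(3,2)$) plus monotonicity disposes of the range $d \ge 2$, $n \ge 3$. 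Substituting the solutions — $d = 1$ (so $r = 1$) and $(n,d) = (2,2)$ (so $r = 4$) — into the displayed criterion turns it into ``$r < h_S(d) - n$, or $r = 1$, or $(n,r) = (2,4)$'', which is exactly (iii); together with the first step, this yields all three equivalences.

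The hard part, such as it is: there is no genuine obstacle, since \autoref{thm:trisec} already carries all the geometry and what is left is the elementary classification of solutions of $d^n = \binom{n+d}{n} - n$ together with the saturation bookkeeping. The only points requiring care are the degenerate degrees at which $I(Z)_d$ vanishes (so $\rmV(I_{\langle d \rangle}) = \PP^n$), which must be excluded or handled directly, and checking that (i) and (iii) fail in tandem over the whole range $r > h_S(d) - n$.
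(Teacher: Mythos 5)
Your proof is correct and follows the same skeleton as the paper's: (i) $\Leftrightarrow$ (ii) via the dictionary between closed subschemes and saturated ideals (the paper phrases this as the projective Nullstellensatz, $J^{\rm sat} = I(\rmV(J))$, using that $I(Z)$ is saturated), and (i) $\Leftrightarrow$ (iii) via \autoref{thm:trisec}. The only genuine divergence is the boundary case $r \geq h_S(d)-n$: the paper argues that if $\rmV(I_{\langle d \rangle}) = Z$ there, then the general set $Z$ would itself be a complete intersection, which for general points happens only when $r=1$ or $(n,r)=(2,4)$; you instead split off $r = h_S(d)-n$, where \autoref{thm:trisec}(i) gives $d^n$ reduced points containing $Z$, so that (i) amounts to $d^n = h_S(d)-n$, and solve that elementary equation (solutions $d=1$ and $(n,d)=(2,2)$ for $n\geq 2$), while for $r > h_S(d)-n$ positive dimensionality kills (i). Your route is a bit more self-contained, since it avoids invoking the classification of general point sets that are equidegree complete intersections (which the paper asserts without proof), at the price of the small numerical verification, which you should spell out slightly more carefully than ``small pairs plus monotonicity.'' One shared caveat: in identifying your criterion with (iii) you, like the paper, implicitly assume $h_S(d)\geq r$ (e.g.\ $d$ minimal); otherwise, e.g.\ $(n,r,d)=(2,4,1)$ satisfies (iii) literally while $I(Z)_1=0$, a degenerate case your displayed criterion in fact excludes correctly.
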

\begin{proof}
By the projective Nullstellensatz, we have $J^{\rm sat} = I({\rm V}(J))$, this shows the equivalence of (i) and (ii). If $r < h_S(d)-n$, then by \autoref{thm:trisec} ${\rm V}(I(Z)_{\langle d \rangle}) = Z$. If $r \geq h_S(d) - n$, then ${\rm V}(I(Z)_{\langle d \rangle})$ is a complete intersection, which, for general $Z$, only happens if $Z$ is a single point or four points in $\PP^2$.
\end{proof}

Inspecting the proof of \autoref{thm:trisec} and \ref{thm:chop_map}, we make a useful technical observation.

\begin{remark}\label{lem:IZ regular sequences}
For  $n,d,r$ such that $h_S(d)-n \leq r < h_S(d)$ and $Z$ general, a general collection of polynomials $f_1,\dots,f_{s} \in I(Z)_d$ is a regular sequence when $s\leq h_S(d)-r$. If $s=n=h_S(d) - r$, then this complete intersection is reduced according to \autoref{thm:trisec}(i).
\end{remark}

\subsection{The saturation gap and expected syzygies}

From now on, for fixed $n,r$, set $d \coloneqq \min\set{t | h_S(t) \geq r}$. Let $Z$ be a set of $r$ general points in $\PP^n$, with vanishing ideal $I=I(Z)$. The degree $d$ is the \emph{Hilbert regularity} of $Z$ (or $S/I(Z)$) defined for a finite graded $S$-module $M$ by
\[
\regH(M) \,\coloneqq\, \min \Set{d \in \ZZ | h_M(t) = \operatorname{HP}_M(t) \text{ for }t\geq d}.
\]
The minimal generators of $I=I(Z)$ are in degrees $\{d,d+1\}$ \cite[Thm.~1.69]{IarrKan:PowerSumsBook} and the operation of chopping the ideal in degree $d$ is trivial unless $I$ has generators in degree $d+1$. The number of minimal generators in degree $d$ is $h_I(d) = h_S(d) -r$ by assumption \eqref{eq:genericHF}, while the minimal generators in degree $d+1$ span a complement of $S_1I_d$ in $I_{d+1}$.
The linear space $S_1I_d$ is the image of the multiplication map $\mu_1\colon S_1\otimes_\KK I_d \to I_{d+1}$; the expected dimension of this image is
\[
\min\{h_S(1)\cdot h_I(d), h_I(d+1)\} \,=\, \min\{(n+1)\cdot(h_S(d)-r), h_S(d+1)-r\}.
\]
which is always an upper bound and is achieved if and only if $\mu_1$ has maximal rank. This leads to the following long standing conjecture \cite{geramita1984ideal}.
\begin{conjecture}[Ideal generation conjecture] \label{conj:igc}
    Let $n,r,d \geq 1$ be integers such that $r \leq h_S(d)$. There is a Zariski dense open subset $U_{\rm igc} \subseteq (\PP^n)^r$ such that, for $Z \in U_{\rm igc}$, the number of minimal generators of $I(Z)$ in degree $d+1$ is $\max \{ 0, h_{S}(d+1) - r - (n+1) \cdot (h_S(d) - r) \}$. 
\end{conjecture}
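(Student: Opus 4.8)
The plan is to reduce the conjecture to a maximal-rank statement about a single multiplication map and then to produce witnesses for it by degeneration. Observe first that the conjecture is equivalent to the following: for general $Z$, the multiplication map $\mu_1\colon S_1\otimes_\KK I(Z)_d\to I(Z)_{d+1}$ has \emph{maximal rank}, i.e.\ rank $\min\{(n+1)(h_S(d)-r),\,h_S(d+1)-r\}$. Indeed, as noted just before the conjecture, the degree-$(d+1)$ minimal generators of $I(Z)$ form a complement of $S_1\cdot I(Z)_d$ in $I(Z)_{d+1}$, so their number is $(h_S(d+1)-r)-\rank\mu_1$, and a short case analysis (distinguishing $\mu_1$ injective from $\mu_1$ surjective) shows that this equals $\max\{0,\,h_S(d+1)-r-(n+1)(h_S(d)-r)\}$ exactly when $\rank\mu_1$ is maximal. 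Since $\rank\mu_1$ is a lower-semicontinuous function of $Z$ on the irreducible variety $U_{\genHF}$ (cf.\ \autoref{prop: semicontinuity multiplication map}), it suffices to exhibit, for every admissible triple $(n,r,d)$, a \emph{single} configuration — a special one, or a flat limit of configurations in $U_{\genHF}$ — at which $\mu_1$ attains the maximal rank. So from now on the goal is purely to construct such witnesses.

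For this I would induct on $n$ via a Horace-type degeneration. Fix a hyperplane $H=\rmV(\ell)\cong\PP^{n-1}$ and let $Z$ specialize to $Z_0=Z_1\sqcup Z_2$ with $Z_1\subset H$ general in $H$ and $Z_2$ general off $H$, the sizes $|Z_1|,|Z_2|$ chosen (a balanced split) so that the inductive hypothesis applies to $Z_1$ inside $\PP^{n-1}$ and to $Z_2$ inside $\PP^n$ in the appropriate degrees. Multiplication by $\ell$ yields, in each degree $t$, short exact sequences $0\to I(Z_2)_{t-1}\xrightarrow{\cdot\ell}I(Z_0)_t\to (I_H(Z_1))_t\to 0$, and, assembling the ones in degrees $d$ and $d+1$ together with the decomposition $S_1=\KK\ell\oplus(S_1/\KK\ell)$ of the linear forms, a commutative diagram that expresses $\mu_1$ for $Z_0$ in terms of the analogous multiplication maps for the residual $Z_2$ (shifted down one degree) and for the trace $Z_1$ inside $H$. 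A snake-lemma argument then transports maximal rank from the two pieces to $Z_0$. The base cases $n=1$ (ideals of points on $\PP^1$ are complete intersections) and $n=2$ (Hilbert--Burch) are immediate; this is essentially the route by which the conjecture is known for $n\le 4$, and one should not expect to shortcut the general case through the Minimal Resolution Conjecture, which fails for $n\ge 6$.

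The hard part will be the range of $r$ where the two bounds $(n+1)(h_S(d)-r)$ and $h_S(d+1)-r$ are nearly equal, so that $\mu_1$ is forced to be simultaneously almost injective and almost surjective: there the balanced split needed above pushes one of the two sub-problems just outside the range in which the inductive hypothesis is valid, and the naive degeneration collapses. The expected remedy is the differential Horace method of Alexander--Hirschowitz: rather than placing reduced points of $Z_1$ on $H$, one specializes tangent (or higher-order) jet data transverse to $H$, which interpolates between the trace and residual exact sequences and restores the sharp count. I expect the genuine obstacle to lie precisely here — in the combinatorics of deciding which jets to impose, and in a finite list of sporadic exceptional configurations that must be settled separately (by hand, or, in the ranges relevant to this paper, by the computations behind \autoref{thm:computeralgebra}). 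Everything else is the routine translation between minimal generators, multiplication maps, Hilbert functions, and the exact sequences above.
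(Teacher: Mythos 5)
There is a genuine gap, and it is exactly the content of the statement: \autoref{conj:igc} is a long-standing \emph{open conjecture} (the paper presents it as such, citing \cite{geramita1984ideal}, and only records that it is known for $n\leq 4$ and in an asymptotic range), so no complete proof is possible here, and your proposal does not give one. Your first paragraph — the translation between the number of degree-$(d+1)$ minimal generators and the maximal rank of $\mu_1\colon S_1\otimes I(Z)_d\to I(Z)_{d+1}$, plus semicontinuity reducing the problem to exhibiting one witness per $(n,r,d)$ — is correct and standard. But the second and third paragraphs are a strategy sketch, not an argument: you explicitly defer the decisive step (``the combinatorics of deciding which jets to impose, and a finite list of sporadic exceptional configurations''), and that deferred step \emph{is} the conjecture. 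Nothing in the proposal shows the differential Horace bookkeeping can be carried out for all $(n,r,d)$, nor that the exceptional cases form a finite (let alone treatable) list; asserting that the remainder is ``routine'' does not make it so.

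Two technical points in the sketch also need repair even as a sketch. First, the residual/trace sequence $0\to I(Z_2)_{t-1}\xrightarrow{\cdot\ell} I(Z_0)_t\to I_H(Z_1)_t\to 0$ is only left-exact in general; exactness on the right (surjectivity of restriction to $H$ onto the ideal of the trace) is itself a maximal-rank-type statement that must be arranged degree by degree, and in the balanced range it typically fails for the naive split — this is precisely why the differential method exists. Second, semicontinuity of $\rank\mu_1$ in the form you need it requires the witness configuration $Z_0$ to still satisfy the generic Hilbert function condition in degree $d$ (so that $\dim I(Z_0)_d=h_S(d)-r$); specializing many points onto a hyperplane can violate this, and then a larger rank at $Z_0$ does not transfer to general $Z$ without a more careful argument (e.g.\ working with the rank of $S_1\otimes I(Z)_d\to S_{d+1}$ on the locus where $\dim I(Z)_d$ is constant, or a flat-limit/collision argument). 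In short: the reduction is fine, the known low-dimensional cases are correctly attributed in spirit, but the proposal leaves open exactly what is open, so it cannot be accepted as a proof of the statement.
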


From this we see that $I(Z)$ has generators in degree $d+1$ if $h_S(d+1)-r - (h_S(d)-r)(n+1)>0$, or equivalently 
\begin{equation*} \label{eq:lowerboundr}
    r \,>\, \frac{(n+1)h_S(d) - h_S(d+1)}{n}.
\end{equation*} 
This bound is sharp assuming the ideal generation conjecture, which is known to hold for $n \leq 4$ or $r$ large, see \autoref{sec: conjectures}. In fact, using \autoref{thm:chopCutsZ}, we can pinpoint the range in which the chopped ideal cuts out $Z$ in a non-saturated way.
\begin{lemma}\label{lem:interesting_range}
Let $n,d$ be positive integers. If 
\begin{equation} \label{eq:range_r}
   \frac{(n+1)h_S(d) - h_S(d+1)}{n} \,<\, r \,<\, h_S(d)- n,
\end{equation}
then a general set of $r$ points in $\PP^n$ has Hilbert regularity $d$, $\rmV(I(Z)_d) = Z$ but $I_{\langle d \rangle} \subsetneq I$. If the Ideal Generation Conjecture holds for $n,d$, then the lower bound is tight.
\end{lemma}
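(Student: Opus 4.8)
The strategy is to simply assemble the three consequences from the already-established results, checking that the two inequalities in \eqref{eq:range_r} are exactly the thresholds for the two relevant dichotomies. First I would handle the Hilbert regularity statement: by definition $d = \min\set{t \mid h_S(t) \geq r}$ is attached to $n,r$ precisely as stated before the lemma, and since $r < h_S(d)$ (which follows from $r < h_S(d) - n$ as $n \geq 1$), the generic Hilbert function \eqref{eq:genericHF} gives $h_{S/I(Z)}(t) = r$ for all $t \geq d$ and $h_{S/I(Z)}(d-1) = h_S(d-1) < r$, so $\regH(S/I(Z)) = d$. This part is essentially a restatement of the setup.

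Next I would use the upper bound $r < h_S(d) - n$ together with \autoref{thm:trisec}(iii) (equivalently \autoref{thm:chopCutsZ}) to conclude $\rmV(I(Z)_d) = Z$ as reduced schemes for general $Z$; note that the hypothesis $r < h_S(d)-n$ is exactly condition (iii) of \autoref{thm:chopCutsZ} in the generic range, so there is nothing further to prove here beyond citing the theorem. Then for the strict inclusion $I_{\langle d \rangle} \subsetneq I$, I would argue that $I(Z)$ must have a minimal generator in degree $d+1$: since $\rmV(I(Z)_d) = Z$ is zero-dimensional but $I_{\langle d \rangle} \neq I(Z)$ would be needed for the claim, I observe that $I_{\langle d\rangle} = I(Z)$ would force $h_{I(Z)}(d+1) = \dim S_1 I(Z)_d \leq (n+1)(h_S(d)-r)$; but the left-hand inequality in \eqref{eq:range_r}, namely $r > \frac{(n+1)h_S(d)-h_S(d+1)}{n}$, rearranges to $h_S(d+1) - r > (n+1)(h_S(d)-r)$, i.e. $h_{I(Z)}(d+1) > (n+1)(h_S(d)-r) \geq \dim S_1 I(Z)_d$, a contradiction. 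Hence $S_1 I(Z)_d \subsetneq I(Z)_{d+1}$, so $I_{\langle d \rangle} \subsetneq I(Z)$.

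Finally, for the tightness claim under the Ideal Generation Conjecture: \autoref{conj:igc} asserts that, for general $Z$, the number of degree-$(d+1)$ minimal generators is $\max\{0, h_S(d+1) - r - (n+1)(h_S(d)-r)\}$, which is positive if and only if $r > \frac{(n+1)h_S(d)-h_S(d+1)}{n}$; so below this threshold $I(Z)$ is generated in degree $d$, i.e. $I_{\langle d \rangle} = I(Z)$ and chopping is trivial. This shows the lower bound in \eqref{eq:range_r} cannot be relaxed. I do not expect any genuine obstacle here — the lemma is a bookkeeping consequence of \autoref{thm:trisec}, \autoref{thm:chopCutsZ}, and the arithmetic of the two bounds; the only mild care needed is the elementary verification that $r < h_S(d)-n$ implies $r < h_S(d)$ and that $\frac{(n+1)h_S(d)-h_S(d+1)}{n} < h_S(d) - n$ is not automatic but is irrelevant to the statement (the lemma only asserts a conditional conclusion when the displayed range is nonempty).
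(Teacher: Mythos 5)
Your proposal is correct and follows essentially the same route as the paper, which proves the lemma implicitly through the preceding discussion: the dimension count $\dim S_1 I(Z)_d \leq (n+1)(h_S(d)-r) < h_S(d+1)-r = h_{I(Z)}(d+1)$ forces degree-$(d+1)$ generators, \autoref{thm:chopCutsZ} handles $\rmV(I(Z)_d)=Z$, and the IGC gives tightness. The only point you gloss over is that the regularity claim needs $r > h_S(d-1)$, which does not come from the standing convention alone but from the inequality $\frac{(n+1)h_S(d)-h_S(d+1)}{n} \geq h_S(d-1)$ noted in the remark immediately after the lemma.
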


\begin{remark}
Note that $\frac{(n+1)h_S(d) - h_S(d+1)}{n} \geq h_S(d-1)$. In particular, in the interesting range, equations of degree $d$ are equations of minimal degree.
\end{remark}

In light of the Ideal Generation Conjecture, our \autoref{conj:chopped_hilbert_function_intro} is a natural generalization; it claims that the multiplication map
$
\mu_e \colon I_d \otimes_\KK S_e \to I_{d+e}
$
has the largest possible rank. To give a formal upper bound on the rank of $\mu_e$, we introduce the \emph{lexicographic ordering} on functions $h,h' \colon \ZZ_{\geq 0} \to \ZZ$:
\[
h \leqlex h' \quad \text{if and only if} \quad \inf \set{t| h(t) < h'(t)} \,\leq\, \inf \set{t| h(t) > h'(t)}.
\]
In other words, $h <_{\text{lex}} h'$ if $h(t) = h'(t)$ for $t<t_0$ and $h(t_0) < h'(t_0)$. This is a total order on functions $h$, and a pointwise inequality $h(t) \leq h'(t)$ for all $t \geq 0$ implies $h \leqlex h'$.

An important theorem of Fröberg \cite{Fro:IneqHilbertSer} asserts the following lower bound.
\begin{theorem}\label{thm:fröberg}
For any ideal $J \subseteq S$ of depth $0$ generated by $s\geq n+1$ elements in degree $d$ one has
\[
h_{S/J}(t) \,\geqlex\, \textup{frö}_{d,s}(t) \,\coloneqq\, \begin{cases}
\sum_{k\geq 0} (-1)^k h_S(t-kd) \binom{s}{k} & \text{if } t<t_0 \\
0 & \text{if } t\geq t_0,
\end{cases}
\]
where $t_0\geq 0$ is the first value for which the summation becomes nonpositive.
\end{theorem}
The \emph{Fröberg Conjecture} predicts that equality is achieved for general $J$. If the conjecture holds (for particular $n,d,s$) the lex-inequality is upgraded to a pointwise inequality for \emph{all} such ideals $J$:
\[
h_{S/J}(t) \,\geq\, \text{frö}_{d,s}(t) \quad \text{for all }t\geq 0.
\]
In our situation this leads to the following theorem.
\begin{theorem}\label{thm: ESC lower bound}
If $I \subseteq S$ has dimension $1$, degree $r<h_S(d)-n$ and Hilbert function \eqref{eq:genericHF},~then
\[
h_{S/I_{\langle d \rangle}}(t) \,\geqlex\, \begin{cases}
\textup{frö}_{d,h_I(d)}(t) & \text{if } t<t_1 \\
r & \text{if } t\geq  t_1,
\end{cases}
\]
where $t_1 = \inf \set{t>d| \textup{frö}_{d,h_I(d)}(t)\leq r}$. More precisely, if $h_{S/I_{\langle d \rangle}}(t')\leq r$ for some $t'>d$, then $h_{S/I_{\langle d \rangle}}(t) = r$ for $t\geq t'$. 
\end{theorem}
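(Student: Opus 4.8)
The plan is to prove the two assertions of the theorem separately and then glue them. First I would settle the ``more precisely'' claim by an elementary argument that avoids Fröberg entirely. Since $I_{\langle d \rangle} \subseteq I$ we have $h_{S/I_{\langle d \rangle}}(t) \geq h_{S/I}(t) = \min\{h_S(t),r\} = r$ for every $t \geq d$, so the hypothesis $h_{S/I_{\langle d \rangle}}(t') \leq r$ at some $t' > d$ actually forces $(I_{\langle d \rangle})_{t'} = I_{t'}$ (both spaces have dimension $h_S(t')-r$ and one is contained in the other). By \cite[Thm.~1.69]{IarrKan:PowerSumsBook} the ideal $I$ is generated in degrees $d$ and $d+1$, hence $S_j \cdot I_{t'} = I_{t'+j}$ for all $j \geq 0$ as soon as $t' \geq d+1$; feeding $(I_{\langle d \rangle})_{t'} = I_{t'}$ into this yields $(I_{\langle d \rangle})_{t'+j} \supseteq S_j (I_{\langle d \rangle})_{t'} = I_{t'+j}$, so $h_{S/I_{\langle d \rangle}}(t) = r$ for all $t \geq t'$.

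For the $\geqlex$-bound I would \emph{not} apply Fröberg to $I_{\langle d \rangle}$ directly but degenerate. Set $s := h_I(d) = h_S(d) - r$, so $s \geq n+1$ by the hypothesis $r < h_S(d)-n$, and $I_{\langle d \rangle} = \langle I_d \rangle$ with $\dim_\KK I_d = s$ by \eqref{eq:genericHF}. Let $J \subseteq S$ be generated by $s$ \emph{generic} forms of degree $d$. Since $s \geq n+1 = \dim S$, a generic such $J$ is $\mathfrak m$-primary, hence $S/J$ has depth $0$ and Fröberg's inequality \cite{Fro:IneqHilbertSer} gives $h_{S/J} \geqlex \text{frö}_{d,s}$. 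On the other hand, by semicontinuity of the ranks of the multiplication maps $S_{t-d} \otimes_\KK V \to S_t$ over $V \in \Gr(s,S_d)$ (\autoref{prop: semicontinuity multiplication map}), the generic value of $h_{S/\langle V \rangle}(t)$ is the minimal one, so specializing $V = I_d$ gives $h_{S/I_{\langle d \rangle}}(t) \geq h_{S/J}(t)$ for every $t$. A pointwise inequality is in particular a $\geqlex$-inequality, and $\geqlex$ is transitive, so $h_{S/I_{\langle d \rangle}} \geqlex \text{frö}_{d,s}$.

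It remains to combine the two. Write $h = h_{S/I_{\langle d \rangle}}$ and let $G$ denote the right-hand side of the theorem. One checks $h(t) = G(t)$ for $t \leq d$: for $t < d$ both equal $h_S(t)$, and at $t = d$ both equal $h_S(d) - s = r$ (here $d < t_1$, so $G(d) = \text{frö}_{d,s}(d)$). If $h \equiv G$ we are done; otherwise let $t^\star > d$ be the smallest degree with $h(t^\star) \neq G(t^\star)$. If $t^\star \geq t_1$ then $G(t^\star) = r \leq h(t^\star)$ and $h(t^\star) \neq r$, so $h(t^\star) > G(t^\star)$. If $d < t^\star < t_1$ then $G$ coincides with $\text{frö}_{d,s}$ on $\{0,\dots,t^\star\}$, so $t^\star$ is also the smallest degree at which $h$ differs from $\text{frö}_{d,s}$; the relation $h \geqlex \text{frö}_{d,s}$ from the previous step then forces $h(t^\star) > \text{frö}_{d,s}(t^\star) = G(t^\star)$. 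Either way $h$ first exceeds $G$ at $t^\star$, which is precisely $h \geqlex G$.

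I expect the genuine obstacle to be the passage in the second paragraph. Fröberg's inequality as available here requires depth $0$, which $I_{\langle d \rangle}$ fails — indeed its failure to be saturated is the very phenomenon the theorem measures — and a general hyperplane section of $S/I_{\langle d \rangle}$ carries a torsion correction supported on the finite-length module $I/I_{\langle d \rangle}$ that is awkward to estimate. Degenerating $I_d$ to a generic $s$-dimensional space of degree-$d$ forms sidesteps this, but only yields a $\geqlex$-inequality rather than a pointwise one; the subtlety resolved in the third paragraph is that this suffices exactly because $h$ agrees with the relevant Fröberg function below the first degree of deviation, so the lexicographic inequality upgrades to a strict pointwise inequality precisely where it is needed.
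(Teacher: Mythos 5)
Your proposal is correct and takes essentially the same route as the paper: the pointwise comparison $h_{S/I_{\langle d\rangle}}(t)\geq h_{S/J}(t)$ with $J$ generated by $h_I(d)$ generic degree-$d$ forms (the paper cites Fröberg's Lemma~1, you argue by rank semicontinuity over the Grassmannian, which is the same content), Fröberg's inequality applied to the Artinian $J$, and the identical generated-in-degrees-$\{d,d+1\}$ argument for the tail, with your third paragraph merely spelling out the gluing that the paper leaves implicit. One immaterial slip in your closing commentary: when $I_{\langle d\rangle}$ is non-saturated, $S/I_{\langle d\rangle}$ in fact \emph{has} depth $0$ (it is the saturated case that fails depth $0$), but your proof never relies on this remark.
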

\begin{proof}
By \cite[Lem.~1]{Fro:IneqHilbertSer}, $h_{S/I_{\langle d \rangle}}(t) \geq h_{S/J}(t)$ where $J$ is generated by $h_I(d)$ general forms of degree $d$. Applying Fröberg's \autoref{thm:fröberg} from above to $J$, which has dimension and depth $0$, we obtain $h_{S/I_{\langle d \rangle}} \geqlex \text{frö}_{d,h_I(d)}$.
Furthermore, if $h_{S/I_{\langle d \rangle}}(t')\leq r$ for some $t'>d$, then $(I_{\langle d \rangle})_{t'} = I_{t'}$. Since the minimal generators of $I(Z)$ are in degree at most $d+1\leq t'$, we have $(I_{\langle d \rangle})_{t} = I_{t}$ for $t\geq t'$ and hence $h_{S/I_{\langle d \rangle}}$ sticks to $r$ from that point on.
\end{proof}

\autoref{conj:chopped_hilbert_function_intro} predicts that for $Z$ general, the Hilbert function $h_{S/I(Z)_{\langle d \rangle}}$ satisfies \autoref{thm: ESC lower bound} with equality, which then is upgraded to a pointwise lower bound. In particular, the multiplication map $\mu_e: I_d \otimes_\KK S_e \rightarrow I_{d+e}$ is either surjective onto $I_{d+e}$, or it achieves the maximum possible dimension from \autoref{thm: ESC lower bound}:
\[
h_{I_{\langle d \rangle}}(d+e) \,=\, \sum_{k \geq 1} (-1)^{k+1} \cdot h_S(d+e - kd) \cdot \binom{h_S(d)-r}{k}
\]
until this sum falls below $h_{I}(d+e)$, from which point on $h_{I_{\langle d \rangle}}(d+e) = h_S(d+e) - r$.

\subsection{Related open problems in commutative algebra} \label{sec: conjectures}

In this section, we give an overview of several conjectures in the study of ideals of points, related to \autoref{conj:chopped_hilbert_function_intro} and \autoref{conj:main}. Let $Z$ be a set of $r$ general points in $\bbP^n$, let $I = I(Z)$ be the vanishing ideal and let $d$ be the Hilbert regularity of $Z$. 

The multiplication map $I_{t}\otimes S_e \to I_{t+e}$ is surjective for $t \geq d+1$ because all generators of $I(Z)$ are in degrees $d$ and $d+1$. The already mentioned \emph{Ideal Generation Conjecture} (IGC) stated in \autoref{conj:igc}, predicts that $\mu_1\colon I_d \otimes S_1 \to I_{d+1}$ has full rank: in other words, either $\mu_1$ is surjective or $I$ has exactly $h_S(d+1) - (n+1)(h_S(d)-r)$ generators of degree $d+1$. This is  related to \autoref{conj:chopped_hilbert_function_intro}, which predicts that $\mu_e \colon I_d \otimes S_e \to S_{d+e}$ has the \emph{expected rank}, and its kernel arises from the Koszul syzygies of the degree $d$ generators of $I$.

The \emph{Minimal Resolution Conjecture} (MRC) \cite{lorenzini1993minimal} is a generalization of the IGC which predicts the entire \emph{Betti table} of the ideal $I$. Consider the minimal free resolution of  $S/I$, regarded as an $S$-module:
\[
\begin{tikzcd}[sep=scriptsize]
	0 & S/I \arrow[l] & S \arrow[l] & F_1 \arrow[l] & \dotsm \arrow[l] & F_{{\rm pd}(S/I)} \arrow[l] & 0, \arrow[l]
\end{tikzcd} \qquad F_i = \bigoplus_j S[-j]^{\beta_{i,j}}.
\]
A consequence of \cite[Lem.~1.69]{IarrKan:PowerSumsBook} is that, for $i \geq 1$, there are at most two nonzero Betti numbers; they are $\beta_{i,d+i-1}$ and $\beta_{i,d+i}$. The IGC predicts that either $\beta_{1,d+1} = 0$ or $\beta_{2,d+1} = 0$. The MRC predicts all values $\beta_{i,j}$. 

Notice that if $\beta_{2,d+1} = 0$, then $\beta_{1+i,d+i} = 0$ for every $i \geq 1$ as well; in this case the values $\beta_{1,d},\beta_{1,d+1}$, together with the exactness of the resolution, uniquely determine the other $\beta_{i,j}$. This is expected to be the case in the range of \eqref{eq:range_r}. In particular, in this range the MRC and the IGC are equivalent and, in a sense, \autoref{conj:chopped_hilbert_function_intro} is a generalization of both.

The MRC is known to be true for $n = 2$ \cite{geramita1984ideal,GerGreRob:MonomialIdeals}, for $n=3$ \cite{ballico1987generators} and for $n=4$ \cite{Walt:MinimalFreeResolutionP4}. Moreover, it has been proved in an asymptotic setting \cite{HirSimp:ResolutionMinimaleGrandNombre} and in a number of other sporadic cases, for which we refer to \cite{Eisenbud1996GaleDA}. It is however false in general \cite{EPSW:ExteriorAlgebraMethods}. There are no known counterexamples to the IGC.

We record here the statement in the case of $\bbP^2$, where the Hilbert-Burch theorem dictates the structure of the minimal free resolution of $S/I$ \cite[Thm.~3.2, Prop.~3.8]{Eisenbud2005:SyzygyBook}. 
\begin{proposition}[Minimal resolution conjecture in $\PP^2$] \label{prop: MRC in P2}
For a general collection of $r$ points $Z\subseteq \PP^2$ with $\regH(Z) = d$, the minimal free resolution of $S/I(Z)$ has the form
\[
\begin{tikzcd}[sep=scriptsize]
	0 & S/I(Z) \arrow[l] & S \arrow[l] & {\!\begin{array}{c}S[-d]^{\beta_{1,d}}\\\bigoplus\\S[-(d+1)]^{\beta_{1,d+1}}\end{array}\!} \arrow[l] & {\!\begin{array}{c}S[-(d+1)]^{\beta_{2,d+1}}\\\bigoplus\\S[-(d+2)]^{\beta_{2,d+2}}\end{array}\!} \arrow[l, "B"'] & 0. \arrow[l]
\end{tikzcd}
\]
Here $\beta_{1,d} = h_S(d) - r$, $\beta_{1,d+1} = \max\{0, h_S(d+1) - (n+1) \beta_{1,d} - r\}$, $\beta_{2,d+1}+\beta_{2,d+2} = \beta_{1,d} + \beta_{1,d+1}-1$ and $\beta_{1,d+1}\cdot \beta_{2,d+1} = 0$.
\end{proposition}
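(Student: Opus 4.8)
The plan is to combine the Hilbert--Burch structure of codimension-$2$ Cohen--Macaulay ideals with the (known) Ideal Generation / Minimal Resolution Conjecture for $\PP^2$, and then pin down the four Betti numbers using the Hilbert function $h_{S/I(Z)} = \min\{h_S,r\}$ together with a rank count. First I would record that $S/I(Z)$ is Cohen--Macaulay of codimension $2$: a general linear form avoids the finitely many points of $Z$, hence is a nonzerodivisor, so $\depth S/I(Z) \geq 1 = \dim S/I(Z)$. By the Hilbert--Burch theorem \cite[Thm.~3.2, Prop.~3.8]{Eisenbud2005:SyzygyBook}, the minimal free resolution has length $2$, say $0 \to F_2 \xrightarrow{B} F_1 \to S \to S/I(Z) \to 0$, with $\rank F_1 = \rank F_2 + 1$ and $I(Z)$ equal to the ideal of maximal minors of $B$. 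By the cited consequence of \cite[Lem.~1.69]{IarrKan:PowerSumsBook}, for $i \in \{1,2\}$ the only possibly nonzero $\beta_{i,j}$ are $\beta_{i,d+i-1}$ and $\beta_{i,d+i}$; this forces $F_1 = S[-d]^{\beta_{1,d}}\oplus S[-(d+1)]^{\beta_{1,d+1}}$ and $F_2 = S[-(d+1)]^{\beta_{2,d+1}}\oplus S[-(d+2)]^{\beta_{2,d+2}}$, which is the displayed shape, and the rank identity gives $\beta_{2,d+1}+\beta_{2,d+2} = \beta_{1,d}+\beta_{1,d+1}-1$.

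Next I would compute the remaining Betti numbers. Since $\regH(Z)=d$ we have $h_S(d-1)<r\le h_S(d)$, so $I(Z)_{d-1}=0$; hence $I(Z)_d$ is spanned by its (linearly independent) minimal generators and $\beta_{1,d} = \dim_\KK I(Z)_d = h_S(d)-r$. Because $I(Z)_{d-1}=0$, one has $(\frakm\,I(Z))_{d+1} = S_1\, I(Z)_d$, so by Nakayama the degree-$(d+1)$ minimal generators form a basis of a complement of $S_1\,I(Z)_d$ in $I(Z)_{d+1}$, i.e. $\beta_{1,d+1} = \dim_\KK\coker\big(\mu_1\colon I(Z)_d\otimes S_1\to I(Z)_{d+1}\big)$. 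Dually, minimality of $B$ identifies $(F_2)_{d+1}$ with the space of linear syzygies among the degree-$d$ generators (a degree-$(d+1)$ syzygy with a nonzero constant coefficient would make some degree-$(d+1)$ generator redundant), so $\beta_{2,d+1} = \dim_\KK\ker\mu_1$. Now the Ideal Generation Conjecture for $\PP^2$, which holds by \cite{geramita1984ideal, GerGreRob:MonomialIdeals}, says that for general $Z$ the map $\mu_1$ has maximal rank. Using $\dim_\KK(I(Z)_d\otimes S_1) = (n+1)\beta_{1,d}$ and $\dim_\KK I(Z)_{d+1} = h_S(d+1)-r$, maximal rank yields $\beta_{1,d+1} = \max\{0,\,h_S(d+1)-(n+1)\beta_{1,d}-r\}$ and $\beta_{2,d+1}=\max\{0,\,(n+1)\beta_{1,d}-(h_S(d+1)-r)\}$, whence $\beta_{1,d+1}\cdot\beta_{2,d+1}=0$; finally $\beta_{2,d+2}$ is forced by the rank identity. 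This is exactly the claimed data.

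The only genuine obstacle is the Ideal Generation / Minimal Resolution Conjecture for $\PP^2$ itself, equivalently the maximal-rank statement for $\mu_1$ (equivalently $\beta_{1,d+1}\beta_{2,d+1}=0$): this is cited rather than reproved. A self-contained argument would degenerate $Z$ to a reduced $0$-dimensional scheme cut out by a codimension-$2$ monomial ``staircase'' ideal realizing the generic Hilbert function $\min\{h_S,r\}$, compute its Hilbert--Burch resolution explicitly to see it already has the minimal Betti numbers compatible with that Hilbert function, and then transfer to general $Z$ by semicontinuity of Betti numbers. Everything else in the argument is forced by Cohen--Macaulayness, the degree bound on the generators, and elementary Hilbert-function bookkeeping.
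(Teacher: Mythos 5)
Your derivation is correct, but it takes a different route from the paper, which does not reprove this statement at all: it simply points to \cite[Prop.~1.7]{Sauer1985:PointsP2}, whose argument constructs suitable monomial ideals and passes to reduced points by polarization. You instead assemble the proposition from general structure theory --- Cohen--Macaulayness of $S/I(Z)$, Hilbert--Burch, and the degree constraints $\beta_{i,j}=0$ for $j\notin\{d+i-1,d+i\}$ --- and then observe that the only genuinely nontrivial input is the maximal-rank statement for $\mu_1\colon I(Z)_d\otimes S_1\to I(Z)_{d+1}$, i.e.\ the IGC in $\PP^2$, which you cite to \cite{geramita1984ideal,GerGreRob:MonomialIdeals}. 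Your identifications $\beta_{1,d+1}=\dim\coker\mu_1$ and $\beta_{2,d+1}=\dim\ker\mu_1$ (via minimality: a degree-$(d+1)$ syzygy with a nonzero constant entry would make a degree-$(d+1)$ generator redundant, and $\mathrm{Syz}_{d}=0$) are exactly right, and the rank identity from Hilbert--Burch then forces $\beta_{2,d+2}$; this makes explicit the equivalence of MRC and IGC in this setting, which the paper only asserts in prose. What the paper's citation buys is a single self-contained reference; what your reduction buys is transparency about where genericity actually enters. Two small remarks: your closing sketch of a self-contained argument is slightly loose, since a zero-dimensional monomial subscheme of $\PP^2$ is never reduced --- one needs polarization (or distraction) to produce reduced points with the same Betti numbers before invoking semicontinuity, which is precisely Sauer's method; and this monomial strategy works for the MRC in $\PP^2$ even though, by \autoref{thm:nomonomials}, it fails for the chopped-ideal statement \autoref{conj:chopped_hilbert_function_intro}, so there is no tension with your appeal to it here.
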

For a proof of this particular case, see for example \cite[Prop.~1.7]{Sauer1985:PointsP2}. In the paper the proof goes via polarization of monomial ideals. One might expect a similar approach would yield \autoref{conj:chopped_hilbert_function_intro} in $\PP^2$. This is not the case, as we will show in \autoref{thm:nomonomials}.

The already mentioned Fröberg's Conjecture \cite{Fro:IneqHilbertSer} predicts the Hilbert function of the ideal generated by generic forms. \autoref{conj:chopped_hilbert_function_intro} states that chopped ideals of general points satisfy Fröberg's conjecture for as many values of $t\in \ZZ_{\geq 0}$ as they \emph{possibly} can.

\subsection{Castelnuovo-Mumford Regularity}
We discussed relations of \autoref{conj:chopped_hilbert_function_intro} with the IGC and the MRC, which have a more cohomological flavour. This raises questions about other cohomological invariants of chopped ideals. We prove a statement regarding the \emph{Castelnuovo-Mumford regularity} of $I_{\langle d \rangle}$. For a finite graded $S$-module $M$, this is defined  as $\regCM M = \max\set{ j - i | \beta_{i,j}(M) \neq 0}$, where $\beta_{i,j}$ are the graded Betti numbers of $M$.
\begin{theorem}\label{thm:regCM}
Let $J \subseteq S$ be a one-dimensional graded ideal, then
\begin{equation}\label{eq:regularities}
\regCM S/J \,=\, \max \{\regH S/J-1, \regH S/J^{\rm sat} \}.
\end{equation}
\end{theorem}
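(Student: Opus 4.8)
The plan is to relate the Castelnuovo--Mumford regularity of $S/J$ to the two Hilbert regularities via local cohomology. Recall that for a finitely generated graded $S$-module $M$ one has $\regCM M = \max_i\{\,\operatorname{end} H^i_{\mathfrak m}(M) + i\,\}$, where $\mathfrak m = \langle x_0,\dots,x_n\rangle$ and $\operatorname{end}(N) = \max\{t : N_t \neq 0\}$ (with the convention $\operatorname{end}(0) = -\infty$). For a one-dimensional graded ideal $J$, the module $M = S/J$ has dimension $\leq 1$, so only $H^0_{\mathfrak m}(M)$ and $H^1_{\mathfrak m}(M)$ can be nonzero. Thus $\regCM S/J = \max\{\operatorname{end} H^0_{\mathfrak m}(S/J),\ \operatorname{end} H^1_{\mathfrak m}(S/J) + 1\}$, and the whole proof reduces to identifying these two end-degrees with the two quantities on the right-hand side of \eqref{eq:regularities}.

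First I would handle $H^1_{\mathfrak m}$. The standard exact sequence relating a module to its saturation gives $H^1_{\mathfrak m}(S/J) \cong H^1_{\mathfrak m}(S/J^{\rm sat})$ in each degree (saturation kills $H^0_{\mathfrak m}$ and does not change $H^1_{\mathfrak m}$), and since $S/J^{\rm sat}$ is saturated of dimension $1$, the Grothendieck--Serre formula gives, for all $t$,
\[
\dim_\KK H^1_{\mathfrak m}(S/J^{\rm sat})_t \;=\; \operatorname{HP}_{S/J^{\rm sat}}(t) - h_{S/J^{\rm sat}}(t).
\]
Hence $\operatorname{end} H^1_{\mathfrak m}(S/J^{\rm sat}) + 1$ is exactly the largest $t$ with $h_{S/J^{\rm sat}}(t) \neq \operatorname{HP}_{S/J^{\rm sat}}(t)$, plus one; by definition of Hilbert regularity this is $\regH S/J^{\rm sat} - 1 + 1 = \regH S/J^{\rm sat}$. (One should double-check the edge case $\regH S/J^{\rm sat} = 0$, i.e. $H^1_{\mathfrak m}(S/J^{\rm sat}) = 0$, where the contribution is $-\infty$ and the formula still reads correctly since the other term dominates or the ideal is $\mathfrak m$-primary-like; this is a routine convention check.)

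Next I would handle $H^0_{\mathfrak m}(S/J) = J^{\rm sat}/J$, whose top nonzero degree is $\max\{t : (J^{\rm sat})_t \neq J_t\}$. By definition, $\regH S/J$ is the smallest $d$ such that $h_{S/J}(t) = \operatorname{HP}_{S/J}(t)$ for $t \geq d$; since $\operatorname{HP}_{S/J} = \operatorname{HP}_{S/J^{\rm sat}}$ and $h_{S/J} \geq h_{S/J^{\rm sat}}$ pointwise with equality exactly when $(J^{\rm sat})_t = J_t$ and simultaneously $h_{S/J^{\rm sat}}(t) = \operatorname{HP}(t)$, one sees that $\regH S/J = \max\{\,\operatorname{end}(J^{\rm sat}/J) + 1,\ \regH S/J^{\rm sat}\,\}$. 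Solving, $\operatorname{end} H^0_{\mathfrak m}(S/J) = \operatorname{end}(J^{\rm sat}/J) = \regH S/J - 1$ whenever the $H^0$ term is what achieves the max in the displayed identity, i.e. whenever $\regH S/J > \regH S/J^{\rm sat}$; and when $\regH S/J = \regH S/J^{\rm sat}$ the saturation module $J^{\rm sat}/J$ may vanish in top degree but is in any case $\leq \regH S/J - 1$. Substituting both computations into $\regCM S/J = \max\{\operatorname{end} H^0_{\mathfrak m}(S/J),\ \operatorname{end} H^1_{\mathfrak m}(S/J)+1\}$ yields $\max\{\regH S/J - 1,\ \regH S/J^{\rm sat}\}$, as claimed.

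The only genuinely delicate point is bookkeeping of the degenerate cases — when $J$ is already saturated (so the $H^0$ term disappears and one must see that $\regH S/J = \regH S/J^{\rm sat}$ makes both sides equal $\regH S/J^{\rm sat}$, using that $\regCM$ of a one-dimensional saturated ideal is $\regH$ by the above), and when $S/J^{\rm sat}$ has regularity $0$. These are handled by carefully tracking the $-\infty$ conventions for $\operatorname{end}$ of the zero module; no new idea is needed beyond the local cohomology dictionary and the Grothendieck--Serre correspondence between $H^1_{\mathfrak m}$ and the Hilbert function deficit.
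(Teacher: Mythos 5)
Your architecture is the same as the paper's: decompose $\regCM S/J = \max\{\modend \Hm^0(S/J),\ \modend \Hm^1(S/J)+1\}$, identify $\Hm^0(S/J) = J^{\rm sat}/J$, and show $\modend \Hm^1(S/J) + 1 = \regH S/J^{\rm sat}$ (you via Grothendieck--Serre, the paper via the comparison sequence with sheaf cohomology --- the same computation); that half of your argument is fine. The genuine gap is in the $\Hm^0$ bookkeeping. Your claim that $h_{S/J}(t) = \operatorname{HP}(t)$ forces both $J_t = (J^{\rm sat})_t$ and $h_{S/J^{\rm sat}}(t) = \operatorname{HP}(t)$, and the resulting identity $\regH S/J = \max\{\modend(J^{\rm sat}/J)+1,\ \regH S/J^{\rm sat}\}$, are false: since $h_{S/J}(t) - \operatorname{HP}(t) = \dim(J^{\rm sat}/J)_t - \dim \Hm^1(S/J^{\rm sat})_t$, the non-saturatedness can exactly compensate the deficit of the saturated Hilbert function below the Hilbert polynomial. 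Concretely, in $S = \KK[x_0,x_1,x_2]$ take $I = \langle x_2,\ x_0x_1(x_0-x_1)\rangle$ (three collinear points) and $J = I_{\geq 2}$ its truncation; then $J^{\rm sat} = I$, $h_{S/J} = (1,3,3,3,\dots)$, $h_{S/I} = (1,2,3,3,\dots)$, so $\regH S/J = 1 < 2 = \regH S/I$, whereas your identity predicts $\regH S/J = 2$. As a result your case analysis, which only covers $\regH S/J \geq \regH S/J^{\rm sat}$, omits the possible case $\regH S/J < \regH S/J^{\rm sat}$.

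The theorem survives and the repair is short, but it must be done the way the paper does it: compare $\modend(J^{\rm sat}/J)$ with $d \coloneqq \regH S/J^{\rm sat}$ directly, rather than reconstructing $\regH S/J$ from the two pieces. If $\modend(J^{\rm sat}/J) \geq d$, then for $t \geq d$ one has $h_{S/J}(t) = \operatorname{HP}(t) + \dim(J^{\rm sat}/J)_t$, so $\regH S/J - 1 = \modend(J^{\rm sat}/J)$ and both sides of \eqref{eq:regularities} equal this value. If $\modend(J^{\rm sat}/J) < d$, then $h_{S/J}(t) = \operatorname{HP}(t)$ for all $t \geq d$, hence $\regH S/J - 1 \leq d-1$, and both sides of \eqref{eq:regularities} equal $d$; this branch is exactly where your missing case (as in the truncation example above) lands. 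With that substitution your proof closes; everything else in your write-up, including the conventions for $\modend$ of the zero module, is correct.
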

Applying this theorem to a chopped ideal of a general set of points, we obtain:
\begin{corollary}
Let $n,r,d$ satisfy \eqref{eq:range_r}. Then for a general set of $r$ points
\[
\regCM S/I_{\langle d \rangle} \,=\, \regH S/I_{\langle d \rangle} - 1.
\]
\end{corollary}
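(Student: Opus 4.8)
The plan is to apply \autoref{thm:regCM} to the one-dimensional ideal $J = I_{\langle d \rangle}$, whose saturation is $(I_{\langle d \rangle})^{\rm sat} = I(Z)$. Then \eqref{eq:regularities} reads $\regCM S/I_{\langle d \rangle} = \max\{\regH S/I_{\langle d \rangle} - 1,\ \regH S/I(Z)\}$, and the whole task reduces to showing that the first term strictly exceeds the second, i.e.\ that $\regH S/I_{\langle d \rangle} - 1 > \regH S/I(Z)$. Before that, one should note that \autoref{thm:regCM} genuinely applies: since $r < h_S(d) - n$ in the range \eqref{eq:range_r}, \autoref{thm:trisec}(iii) (or \autoref{lem:interesting_range} together with \autoref{thm:chopCutsZ}) gives $\rmV(I_{\langle d \rangle}) = Z$, a nonempty finite set of points; hence $\sqrt{I_{\langle d \rangle}} = I(Z)$, so $\dim S/I_{\langle d \rangle} = 1$, and $(I_{\langle d \rangle})^{\rm sat} = I(Z)$ by the projective Nullstellensatz.

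For the right-hand side, since $Z$ is general its Hilbert function is \eqref{eq:genericHF}, so $h_{S/I(Z)}(t) = r$ exactly when $h_S(t) \geq r$, i.e.\ for $t \geq d$ by the definition of $d$; thus $\regH S/I(Z) = d$. For the left-hand side, I would use that in the range \eqref{eq:range_r} the ideal $I(Z)$ has minimal generators in degree $d+1$ --- this is the elementary dimension count recorded just before \autoref{lem:interesting_range}, and it needs no conjecture. Consequently $(I_{\langle d \rangle})_t = I(Z)_t$ for $t \leq d$, but $(I_{\langle d \rangle})_{d+1} = S_1 \cdot I(Z)_d \subsetneq I(Z)_{d+1}$, so
\[
h_{S/I_{\langle d \rangle}}(d+1) \ >\ h_{S/I(Z)}(d+1) \ =\ r\ =\ \operatorname{HP}_{S/I_{\langle d \rangle}}(d+1),
\]
the last equality because $\rmV(I_{\langle d \rangle}) = Z$ consists of $r$ reduced points. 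Hence the Hilbert function of $S/I_{\langle d \rangle}$ has not reached its Hilbert polynomial in degree $d+1$, so $\regH S/I_{\langle d \rangle} \geq d+2$ and $\regH S/I_{\langle d \rangle} - 1 \geq d+1 > d = \regH S/I(Z)$.

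Plugging these into \eqref{eq:regularities} gives $\regCM S/I_{\langle d \rangle} = \regH S/I_{\langle d \rangle} - 1$, as claimed. There is no real obstacle: all the work sits in \autoref{thm:regCM} and \autoref{thm:trisec}, and the corollary is exactly the observation that, in the interesting range \eqref{eq:range_r}, the saturation $I(Z)$ already reaches its Hilbert polynomial in degree $d$ while $S/I_{\langle d \rangle}$ cannot, so the ``$\regH S/J^{\rm sat}$'' branch of the maximum never competes. The only point needing minor care is ruling out $\dim S/I_{\langle d \rangle} = 0$, which is immediate since its Hilbert polynomial is the nonzero constant $r$.
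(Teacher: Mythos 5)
Your proposal is correct and is exactly the deduction the paper intends (the paper states the corollary as an immediate application of \autoref{thm:regCM} without writing it out): take $J = I_{\langle d \rangle}$, use $\rmV(I_{\langle d \rangle}) = Z$ and $(I_{\langle d \rangle})^{\rm sat} = I(Z)$ from \autoref{thm:trisec}/\autoref{thm:chopCutsZ}, note $\regH S/I(Z) = d$, and use the unconditional dimension count in the range \eqref{eq:range_r} to see that $h_{S/I_{\langle d \rangle}}(d+1) > r$, so $\regH S/I_{\langle d \rangle} - 1 \geq d+1$ dominates the maximum. Your spelled-out version, including the check that $\dim S/I_{\langle d \rangle} = 1$ and that no conjecture is needed for the existence of degree-$(d+1)$ generators, fills in precisely the details the paper leaves implicit.
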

The \autoref{conj:main} predicts the Hilbert regularity of $I_{\langle d \rangle}$, hence this conjecture is directly related to Castelnuovo-Mumford regularity.

\begin{proof}[Proof of \autoref{thm:regCM}]
The proof relies on local cohomology. Let ${\mathfrak m} = \langle x_0, \ldots, x_n \rangle_S$. The 0-th local cohomology group measures non-saturatedness as
\[
\Hm^0(S/J) \,=\, \Set{x + J \in S/J | \mathfrak m^k x \subseteq J,\ k\gg 0} \,=\, J^{\rm sat}/J.
\]
The dimension of a finite $S$-module $M$ can be characterized as the largest $i\geq 0$ with $\Hm^i(M) \neq 0$ \cite[Prop.~A1.16]{Eisenbud2005:SyzygyBook} so all cohomology groups $\Hm^i(S/J)$ vanish for $i\geq 2$.

We next provide a description of the remaining first local cohomology group. Let $I = J^{\rm sat}$ and let $Z = \operatorname{Proj} S/I \subseteq \PP^n$ be the scheme defined by $I$. The quotients $S/I$ and $S/J$ have the same higher local cohomology. The comparison sequence for local and sheaf cohomology
\[
\begin{tikzcd}[sep=scriptsize]
0 \arrow[r] &  S/I \arrow[r] & \displaystyle\bigoplus_{d \in \ZZ}\underbrace{{\rm H}^0(\PP^n, \mathcal{O}_{Z}(d))}_{\cong\, \KK^{\deg Z}} \arrow[r] & \Hm^1(S/I) \arrow[r] & 0.
\end{tikzcd}
\]
shows that $\Hm^1(S/I)_d$ is the cokernel of $(S/I)_d \hookrightarrow {\rm H}^0(\PP^n, \mathcal{O}_{Z}(d))$. Introducing the notation $\modend(N) \coloneqq \sup \set{t \in \ZZ | N_t \neq 0}$,  this shows that $\modend (\Hm^1(S/J)) + 1 = \regH S/I \eqqcolon d$.

Now the Castelnuovo-Mumford regularity can be expressed in terms of local cohomology: 
\[
\regCM M \,=\, \max_i \modend(\Hm^i(M)) + i.
\]
See \cite[Thm.~4.3]{Eisenbud2005:SyzygyBook}. For $M=S/J$ using vanishing in degree $i \geq \dim S/J = 1$, this gives
\begin{equation}\label{eq:CM_for_SJ_max}
\regCM S/J \,=\, \max \{\modend(I/J), d\}.
\end{equation}
To relate this to the maximum in equation \eqref{eq:regularities}, we distinguish two cases. If $\modend(I/J) \geq d$, then $h_{S/J}(t) > h_{S/I}(t)$ for some $t\geq d$, so $\modend(I/J) = \regH S/J - 1$. Otherwise $\modend(I/J)+1 \leq d$, then $\regH S/J \leq \regH S/I$ and the maximum in  \eqref{eq:CM_for_SJ_max} is attained at $d$.\qedhere
\end{proof}

\section{Points in the plane} \label{sec:3}

When $n =1$, $Z$ is a set of points on the projective line. In this case $I(Z)$ is a principal ideal and it always coincides with its chopped ideal. In particular, \autoref{conj:chopped_hilbert_function_intro} and \autoref{conj:main} hold trivially, as well as the IGC and the MRC.

This section studies the case $n = 2$, that is when $Z \in (\mathbb{P}^2)^r$ is a collection of $r$ general points in the plane. \autoref{fig:gapsP2} shows the saturation gaps for some values of $r$. In each case we use the chopped ideal $I_{\langle d \rangle}$ in degree $d = \min \set{ t | h_S(t) \geq r }$. The gap is only plotted in cases where $I_{\langle d \rangle}$ defines $Z$ scheme-theoretically, following \autoref{thm:chopCutsZ}. 
\begin{figure}
    \centering
    \includegraphics[width=0.8\linewidth]{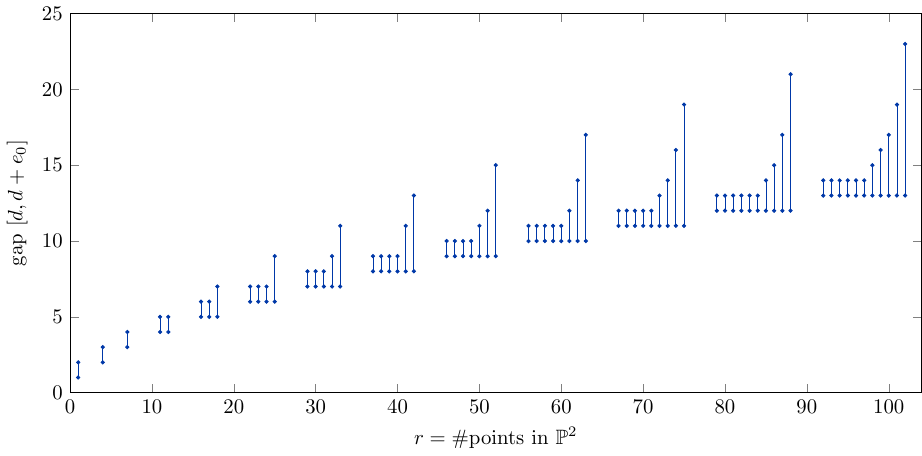}
    \caption{Saturation gaps for chopped ideals of points in the plane.}
    \label{fig:gapsP2}
\end{figure}
Since the IGC is known to be true in $\bbP^2$, \autoref{lem:interesting_range} provides exactly the range where $I_{\langle d \rangle} \neq I(Z)$, and they both define the set $Z$ as a scheme:
\begin{equation} \label{eq:interesting_range_plane}
\frac{d(d+2)}{2}  \,<\,  r  \,<\,  \frac{(d+2)(d+1)}{2}-2 .
\end{equation}
If $d < 5$, this range is empty and the corresponding gaps in  \autoref{fig:gapsP2} have length 1. For $d = 5$, the only integer solution to \eqref{eq:interesting_range_plane} is $r = 18$. This is the leftmost length-two gap in \autoref{fig:gapsP2}. Hence, the simplest interesting chopped ideal is that of \emph{three quintics passing through 18 general points in the plane}, first encountered in \autoref{ex:intro1}. In \autoref{sec:quintics}, we thoroughly work out this instructive example.  
For $d \geq 5$, write 
\[
r_{d,{\min}}  \,\coloneqq\,  \left \lfloor \frac{d(d+2)}{2} \right \rfloor + 1  \qquad \text{and} \qquad r_{d,{\max}} \,=\,  \frac{(d+2)(d+1)}{2}-3  
\]
for the extremal values in the range \eqref{eq:interesting_range_plane}. \autoref{sec:rdmax} proves \autoref{conj:chopped_hilbert_function_intro} for $r = r_{d,\max}$, and \autoref{sec:rdmin} proves it for $r = r_{d,\min}$, when $d$ is odd. Throughout this section, $S = \KK[x_0,x_1,x_2]$.

\subsection{Quintics through eighteen points} \label{sec:quintics}

Let $Z = (z_1, \ldots, z_{18})\in (\mathbb{P}^2)^{18}$ be a configuration of $18$ general points in $\bbP^2$. Equation \eqref{eq:genericHF} guarantees that $Z$ has no equations of degree $4$ and exactly $3 = 21-18$ equations of degree $5$. Hence $I(Z)_5 = \langle f_0,f_1,f_2\rangle_\KK$ for three linearly independent elements $f_i \in S_5$, and the chopped ideal is ${I_{\langle 5 \rangle}}= \langle f_0,f_1 ,f_2 \rangle_S$.

Notice that $h_{S/{I_{\langle 5 \rangle}}}(6) \geq 28 - 3 \cdot 3 = 19$ and equality holds if and only if the three quintics $f_0,f_1,f_2$ do not have linear syzygies. Since the IGC is true for $n=2$, this is indeed the case. Moreover, \autoref{ex:intro1} shows experimentally that $h_{S/{I_{\langle 5 \rangle}}}(7) = 18$, so the saturation gap is $2$. The minimal resolution of $I(Z)$ according to \autoref{prop: MRC in P2} is
\[
\begin{tikzcd}[sep=scriptsize]
	0 & I(Z) \arrow[l] & {\!\begin{array}{c}S[-5]^{3}\\\bigoplus\\S[-6] \end{array}\!} \arrow[l] & S[-7]^{3}\arrow[l, "B"'] & 0. \arrow[l]
\end{tikzcd}
\]
The minimal generators of $I(Z)$ are the maximal minors of the Hilbert-Burch matrix $B$, which is a $4 \times 3$-matrix with three rows of quadrics and one row of linear forms. As a result, the maximal minors are three quintics, spanning the linear space $I(Z)_5$, and one sextic, which is an element of $I(Z)_6 \setminus ({I_{\langle 5 \rangle}})_6$. The existence of the sextic is predicted by \autoref{thm: ESC lower bound} and the gap $\gamma_n(d,r) = \gamma_2(5,18) = 2$ agrees with \autoref{conj:main}.

Note that the \emph{missing sextic} is uniquely determined modulo the $9$-dimensional linear space $({I_{\langle 5 \rangle}})_6 \subseteq I(Z)_6$. We provide a way to compute an element of $I(Z)_6 \setminus ({I_{\langle 5 \rangle}})_6$ from $Z$. 
\begin{proposition}\label{prop: missing sextic}
 Let $Z \subseteq \bbP^2$ be a set of $18$ general points and let $Z = Z_1 \mathbin{\dot{\cup}} Z_2$ be a partition of $Z$ into two sets of $9$ points. Let $f_1,f_2,f_3 \in I(Z)_5 \subseteq S_5$ be linearly independent and let $g_i \in I(Z_i)_3 \setminus \{0\} \subseteq S_3$. Then $g = g_1g_2 \in I(Z)$ and $g \notin I_{\langle 5 \rangle}$.
\end{proposition}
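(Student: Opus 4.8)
The plan is to separate the trivial half of the statement, $g\in I(Z)$, from its content, $g\notin I_{\langle5\rangle}$, and to prove the latter by restricting everything to the cubic $C_1=\rmV(g_1)$, which turns it into the injectivity of one multiplication map on an elliptic curve. For the easy half: for a general set $Z_i$ of $9$ points one has $h_{I(Z_i)}(3)=h_S(3)-9=1$, so a nonzero cubic $g_i$ with $Z_i\subseteq\rmV(g_i)=:C_i$ exists and is unique up to scalar, and for general $Z$ the cubic through the $9$ general points $Z_i$ is a general member of $|\mathcal{O}_{\PP^2}(3)|$, hence $C_i$ is a smooth plane cubic. Then $g=g_1g_2$ vanishes on $Z_1\cup Z_2=Z$, so $g\in I(Z)_6$. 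By \eqref{eq:genericHF} we have $\dim I(Z)_6=28-18=10$, and, as recalled just above, $\dim(I_{\langle5\rangle})_6=3\cdot3=9$; thus $(I_{\langle5\rangle})_6$ is a hyperplane in $I(Z)_6$ and everything comes down to showing $g$ avoids it.

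For the reduction I would use the restriction $\pi\colon S\to S/(g_1)=\bigoplus_tH^0(C_1,\mathcal{O}_{C_1}(t))$, writing $\bar u=\pi(u)$. Since $C_1$ is a smooth cubic, in degree $6$ the map $\pi$ is surjective onto $H^0(\mathcal{O}_{C_1}(6))$ with kernel $g_1S_3$, and
\[
I(Z)_6\cap g_1S_3=\langle g_1g_2\rangle,
\]
because $g_1q$ (with $q\in S_3$) vanishes on $Z$ if and only if $q$ vanishes on $Z_2$, i.e.\ $q\in I(Z_2)_3=\langle g_2\rangle$. The restricted quintics $\bar f_1,\bar f_2,\bar f_3\in H^0(\mathcal{O}_{C_1}(5))$ are linearly independent (a dependence would give a quintic in $I(Z)_5$ divisible by $g_1$, hence a quadric through the $9$ general points $Z_2$, which must vanish); put $V_1=\langle\bar f_1,\bar f_2,\bar f_3\rangle$. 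Since each $\bar f_i$ vanishes on $Z_1$ we have $V_1\subseteq H^0(M)$ with $M\coloneqq\mathcal{O}_{C_1}(5)(-Z_1)$ of degree $6$, and $\pi\bigl((I_{\langle5\rangle})_6\bigr)=H^0(\mathcal{O}_{C_1}(1))\cdot V_1$ is the image of the multiplication map
\[
\mu\colon H^0(\mathcal{O}_{C_1}(1))\otimes V_1\longrightarrow H^0\bigl(M\otimes\mathcal{O}_{C_1}(1)\bigr)=H^0\bigl(\mathcal{O}_{C_1}(6)(-Z_1)\bigr),
\]
a map between $9$-dimensional spaces. Counting dimensions gives $\dim\Im\mu=9-\dim\bigl((I_{\langle5\rangle})_6\cap g_1S_3\bigr)$, and the intersection on the right lies in $\langle g_1g_2\rangle$; therefore $g_1g_2\notin(I_{\langle5\rangle})_6$ if and only if $\mu$ is injective. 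So the proposition reduces to: $\mu$ is injective for general $Z$.

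To attack injectivity I would first dispose of a pencil. For general $Z$ the zero divisors of $\bar f_2$ and $\bar f_3$ in $|M|$ are disjoint, so $W\coloneqq\langle\bar f_2,\bar f_3\rangle$ is base-point-free, and the base-point-free pencil trick identifies $\ker\bigl(H^0(\mathcal{O}_{C_1}(1))\otimes W\to H^0(M\otimes\mathcal{O}_{C_1}(1))\bigr)$ with $H^0(M^{-1}\otimes\mathcal{O}_{C_1}(1))$, which vanishes since $\deg(M^{-1}\otimes\mathcal{O}_{C_1}(1))=-3<0$. Hence $\mu$ is injective on the $6$-dimensional $H^0(\mathcal{O}_{C_1}(1))\otimes W$; since $V_1=W\oplus\KK\bar f_1$, injectivity of $\mu$ on all of $H^0(\mathcal{O}_{C_1}(1))\otimes V_1$ is equivalent to $\{\bar\ell\,\bar f_1:\bar\ell\in H^0(\mathcal{O}_{C_1}(1))\}$ meeting the image of $H^0(\mathcal{O}_{C_1}(1))\otimes W$ only in $0$, i.e.\ to the non-existence of a relation $\bar\ell_1\bar f_1=\bar\ell_2\bar f_2+\bar\ell_3\bar f_3$ with $\bar\ell_1\neq0$. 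Evaluating such a relation at the $6$ points of $\rmV(\bar f_1)\setminus Z_1$ (where $\bar f_1$ vanishes) produces $6$ linear conditions on $(\bar\ell_2,\bar\ell_3)\in H^0(\mathcal{O}_{C_1}(1))^{\oplus2}\cong\KK^6$, and $\mu$ is injective if and only if these conditions have full rank $6$ (the reverse implication because the sections of $\mathcal{O}_{C_1}(6)(-Z_1)$ vanishing at those six points are precisely the $\bar\ell\,\bar f_1$).

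The hard part is exactly this final genericity input: that the subspace $V_1\subseteq H^0(M)$ arising from a general $Z$ --- in particular, the position of $\bar f_1$ relative to the pencil $W$ --- is general enough for the $6$ conditions above to be independent, equivalently for $\mu$ to be an isomorphism. I would settle it by semicontinuity, reducing the whole proposition to a single example: ``$g\notin(I_{\langle5\rangle})_6$'' is a Zariski-open condition on the space of $18$-point configurations together with a partition into two nine-point sets (it is the non-vanishing of the $10\times10$ determinant formed by $g$ and a basis of $(I_{\langle5\rangle})_6$, and $\dim(I_{\langle5\rangle})_6=9$ holds on a dense open set --- compare the semicontinuity statement \autoref{prop: semicontinuity multiplication map}), so it suffices to verify it for one configuration, either explicitly on a chosen elliptic curve or by a short \Macaulay\ computation as in \autoref{ex:intro1}. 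The more conceptual alternative --- and, I expect, the genuinely delicate step --- is to show without reference to an example that the assignment $Z\mapsto(C_1,V_1)$ is dominant onto its natural parameter space, so that a general $Z$ forces $V_1$ to be general in $\Gr(3,H^0(M))$; this ought to follow from a tangent-space computation, deforming the nine points of $Z_2$, but that is where the real care is needed.
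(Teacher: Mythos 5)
Your reduction is correct and attractive: restricting to the smooth cubic $C_1=\rmV(g_1)$, using that $\dim (I_{\langle 5\rangle})_6=9$ (no linear syzygies, IGC in $\bbP^2$) and that $I(Z)_6\cap g_1S_3=\langle g_1g_2\rangle$, the claim $g_1g_2\notin (I_{\langle5\rangle})_6$ is indeed equivalent to injectivity of the multiplication map $\mu\colon H^0(\mathcal{O}_{C_1}(1))\otimes V_1\to H^0(\mathcal{O}_{C_1}(6)(-Z_1))$ between two $9$-dimensional spaces, and the base-point-free pencil trick correctly disposes of the sub-pencil $W$ (granting the minor points you gloss over: one may choose $f_2,f_3$ general in the net since the conclusion depends only on the span, and the pencil of residual divisors is then base-point-free because $I_{\langle5\rangle}$ cuts out $Z$ scheme-theoretically by \autoref{thm:trisec}). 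But the argument stops exactly at the point that carries the content of the proposition: you never prove that the remaining $6$ evaluation conditions have full rank, i.e.\ that $\mu$ is injective, for general $Z$. Both completions you sketch are left unexecuted --- the semicontinuity route requires exhibiting and verifying a single witness configuration (and none is given; note that the Hilbert-function computation of \autoref{ex:intro1} does not suffice, since it only shows \emph{some} sextic is missing, not that a prescribed product $g_1g_2$ is), while the ``conceptual'' route via dominance of $Z\mapsto (C_1,Z_1,V_1)$ is precisely the delicate genericity statement you flag and do not attempt; moreover even granting dominance one would still need a maximal-rank statement for a \emph{general} $3$-plane $V_1\subseteq H^0(M)$, which is not automatic. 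As written, therefore, the proposal is an honest reduction plus an acknowledged gap, not a proof. (If you do pursue the example route, also record the small uniformity step: openness holds on the space of partitioned configurations, so a general $Z$ handles all $\tfrac12\binom{18}{9}$ partitions at once by intersecting the finitely many $\frakS_{18}$-translates of the open set, in the spirit of \autoref{prop: semicontinuity multiplication map}.)

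For comparison, the paper supplies this missing genericity by a global argument rather than a pointwise one: it considers the whole set $W=\calP_{3,3}\cap \bbP I(Z)_6$ of products of cubics lying in $I(Z)_6$ (one reduced point per bipartition, a $0$-dimensional linear section of the $18$-dimensional variety $\calP_{3,3}$ of degree $\tfrac12\binom{18}{9}$), proves via the Bertini-type non-degeneracy result \autoref{prop: linear cut} that $W$ cannot lie in the hyperplane $\bbP(I_{\langle5\rangle})_6$ --- so \emph{some} partition works --- and then upgrades ``some'' to ``all'' by the monodromy statement \autoref{prop: monodromy} applied to the finite covering of chopped ideals by partitioned configurations. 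That structure (non-degeneracy of the full orbit of candidate sextics plus monodromy) is exactly what replaces the unproven injectivity of your map $\mu$; if you want to salvage your elliptic-curve approach, you should either import an analogous monodromy/irreducibility argument or carry out the explicit verification you defer.
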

The proof of \autoref{prop: missing sextic} is deferred to \autoref{app: missing sextic}. 

It was observed in \autoref{ex:intro1} that $(I_{\langle 5 \rangle})_7 = I(Z)_7$. This is equivalent to the following result, which is a consequence of the more general \autoref{thm: rmax in P2} and \autoref{thm:main3.3}. 

\begin{proposition} \label{prop:noquadsyz18}
    Let $Z \subseteq \bbP^2$ be a set of $18$ general points and let $f_0,f_1,f_2 \in I(Z)_5 \subseteq S_5$ be linearly independent. Then $f_0,f_1, f_2$ have no quadratic syzygies. That is, $h_{S/I_{\langle 5 \rangle}}(7) = 18$.
\end{proposition}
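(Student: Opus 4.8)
The plan is to treat this as a statement about the single Hilbert value $h_{S/I_{\langle 5\rangle}}(7)$ and to pin it down by squeezing it between the Fröberg-type lower bound of \autoref{thm: ESC lower bound} and the trivial upper bound coming from $I_{\langle 5\rangle}\subseteq I(Z)$. First I would record the reformulation: $(I_{\langle 5\rangle})_7 = S_2\cdot I(Z)_5$ is the image of the multiplication map $\mu_2\colon S_2\otimes_\KK I(Z)_5\to S_7$, whose source has dimension $6\cdot 3 = 18$; hence $f_0,f_1,f_2$ have no quadratic syzygy exactly when $\mu_2$ is injective, i.e. exactly when $\dim(I_{\langle 5\rangle})_7 = 18$, i.e. when $h_{S/I_{\langle 5\rangle}}(7) = h_S(7)-18 = 18$. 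So it is enough to compute this one number.

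Next I would assemble two elementary facts. On the upper side, $I_{\langle 5\rangle}\subseteq I(Z)$ gives $h_{S/I_{\langle 5\rangle}}(t)\geq h_{S/I(Z)}(t) = \min\{h_S(t),18\}$, which equals $18$ for every $t\geq 5$. On the lower side, since the IGC holds for general points in $\bbP^2$ (from \autoref{prop: MRC in P2} one gets $\beta_{2,6}(S/I(Z)) = 0$), the three quintics $f_0,f_1,f_2$ have no linear syzygy, so $\dim(I_{\langle 5\rangle})_6 = 3\,h_S(1) = 9$ and $h_{S/I_{\langle 5\rangle}}(6) = h_S(6)-9 = 19$. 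Combined with the obvious facts $(I_{\langle 5\rangle})_t = 0$ for $t\leq 4$ and $\dim(I_{\langle 5\rangle})_5 = 3$, this determines $h_{S/I_{\langle 5\rangle}}$ in all degrees $\leq 6$, namely $1,3,6,10,15,18,19$.

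Now I would invoke \autoref{thm: ESC lower bound} with $d = 5$, $h_I(5) = 3$, $r = 18$. Here $\textup{frö}_{5,3}$ takes the values $1,3,6,10,15,18,19$ in degrees $0,\dots,6$ and then drops to $\textup{frö}_{5,3}(7) = h_S(7)-3\,h_S(2) = 18$, so $t_1 = 7$, and the theorem yields $h_{S/I_{\langle 5\rangle}}\geqlex\phi$, where $\phi$ agrees with $\textup{frö}_{5,3}$ in degrees $\leq 6$ and is the constant $18$ in degrees $\geq 7$. By the previous paragraph $h_{S/I_{\langle 5\rangle}}$ agrees with $\phi$ in all degrees $\leq 6$, and by the upper bound $h_{S/I_{\langle 5\rangle}}(t)\geq 18 = \phi(t)$ in all degrees $\geq 7$; hence $\{\,t : h_{S/I_{\langle 5\rangle}}(t)<\phi(t)\,\} = \varnothing$, and the definition of $\geqlex$ then forces $\{\,t : h_{S/I_{\langle 5\rangle}}(t)>\phi(t)\,\} = \varnothing$ as well. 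So $h_{S/I_{\langle 5\rangle}}(t)\leq\phi(t) = 18$ for $t\geq 7$, which together with the lower bound gives $h_{S/I_{\langle 5\rangle}}(t) = 18$ for all $t\geq 7$; in particular $h_{S/I_{\langle 5\rangle}}(7) = 18$. (As a byproduct $h_{S/I_{\langle 5\rangle}}(6) = 19\neq 18$, so the saturation gap is $2$, consistent with \autoref{conj:main}.)

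The only non-formal input is the absence of linear syzygies among the three quintics, which is the IGC for points in $\bbP^2$ and is classical; everything else is the lex squeeze. The step I would be most careful about is that the comparison function coming from \autoref{thm: ESC lower bound} is the \emph{flattened} $\phi$ (constant $=r$ past $t_1$), not the raw $\textup{frö}_{5,3}$, which keeps decreasing ($\textup{frö}_{5,3}(8) = 15$); it is precisely this flattening, meeting the unconditional bound $h_{S/I_{\langle 5\rangle}}\geq 18$, that closes the squeeze. A more hands-on alternative would start from \autoref{prop: missing sextic} together with the observation that the claim is equivalent to $\mathfrak m\cdot g\subseteq I_{\langle 5\rangle}$ for the missing sextic $g = g_1g_2$; proving that inclusion directly seems to require a finer study of the multiplication maps of $I(Z_1)$ and $I(Z_2)$ and is, I expect, more laborious than the route above.
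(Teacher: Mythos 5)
Your reduction to the single value $h_{S/I_{\langle 5\rangle}}(7)$, the computation of $h_{S/I_{\langle 5\rangle}}$ in degrees $\leq 6$ via the IGC, and the pointwise bound $h_{S/I_{\langle 5\rangle}}(t)\geq 18$ for $t\geq 5$ are all correct, but the final ``squeeze'' does not close. \autoref{thm: ESC lower bound} provides a lexicographic \emph{lower} bound: $h_{S/I_{\langle 5\rangle}}\geqlex\phi$ means, by the paper's definition of $\leqlex$, that at the first degree where $h_{S/I_{\langle 5\rangle}}$ differs from your comparison function $\phi$ it is \emph{larger}. Once you know $h_{S/I_{\langle 5\rangle}}(t)=\phi(t)$ for $t\leq 6$ and $h_{S/I_{\langle 5\rangle}}(t)\geq\phi(t)=18$ for $t\geq 7$, the relation $h_{S/I_{\langle 5\rangle}}\geqlex\phi$ is automatically satisfied and carries no further information; in particular it does not exclude $h_{S/I_{\langle 5\rangle}}(7)\in\{19,20,21\}$, i.e.\ it does not exclude quadratic syzygies. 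Your step ``$\set{t\,|\,h<\phi}=\varnothing$ forces $\set{t\,|\,h>\phi}=\varnothing$'' is the condition one would get from $h\leqlex\phi$, so you have implicitly used the inequality in the wrong direction. The ``more precisely'' clause of \autoref{thm: ESC lower bound} does not help either, since it requires knowing $h_{S/I_{\langle 5\rangle}}(7)\leq 18$ to begin with. Indeed, \autoref{conj:chopped_hilbert_function_intro} is exactly the assertion that the lex lower bound is attained; if that followed formally from Fr\"oberg's inequality plus the IGC, Sections 3--5 of the paper would be unnecessary, and \autoref{thm:nomonomials} indicates that the chopped Hilbert function is not determined by such numerical data alone.

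The missing upper bound at degree $7$ is the genuine content of the proposition and requires a geometric use of the generality of $Z$. The paper supplies it in two ways. Via the Hilbert--Burch matrix: a quadratic syzygy is a linear combination of the columns of $B$ killing the last row, so the three linear entries $\ell_0,\ell_1,\ell_2$ would be linearly dependent, and $\rmV(\ell_0,\ell_1,\ell_2)$ would be a distinguished point of $Z$, contradicting genericity by the monodromy argument carried out in \autoref{thm:main3.3}. Via liaison: given $s_0f_0+s_1f_1+s_2f_2=0$ with $s_i\in S_2$, one passes to the complete intersection $K=\rmV(f_1,f_2)$ of $25$ points containing $Z$, observes that $s_0$ vanishes on the residual set $Z'=K\setminus Z$ of $7$ points, and uses liaison (as in \autoref{thm: rmax in P2}) to see $I(Z')_2=0$, whence $s_0=s_1=s_2=0$. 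One of these inputs (or your ``hands-on'' alternative via the missing sextic, which as you suspect is more laborious) is indispensable; the lex squeeze alone cannot replace it.
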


We sketch two different proofs of \autoref{prop:noquadsyz18}, to illustrate the idea of the more general proofs of \autoref{thm: rmax in P2}, \autoref{thm:main3.3} and \autoref{thm: rmax in general}. A straightforward dimension count shows that $h_{S/I_{\langle 5 \rangle}}(7) = 18$ if and only if the forms $f_0,f_1,f_2$ do not have quadratic syzygies. The Hilbert-Burch matrix $B$ has the form 
\[
B \,=\, \begin{pmatrix}
    q_{00} & q_{01} & q_{02} \\
    q_{10} & q_{11} & q_{12} \\
    q_{20} & q_{21} & q_{22} \\
    \ell_0 & \ell_1 & \ell_2
\end{pmatrix}  \,\in\, S^{4 \times 3},
\]
where $q_{ij} \in S_2$ are quadrics, and $\ell_i \in S_1$ are linear forms. The quadratic syzygies of $f_0,f_1, f_2$ are the $\KK$-linear combinations of the columns of $B$ whose last entry is zero. If such a non-trivial $\KK$-linear combination exists, the linear forms $\ell_0, \ell_1, \ell_2$ are linearly dependent. Hence, $\rmV(\ell_0,\ell_1,\ell_2) \subseteq \mathbb{P}^2$ is non-empty. The quintics $f_0,f_1,f_2$ are the $3\times 3$ minors of $B$ involving the last row, so that $\rmV(\ell_0,\ell_1,\ell_2) \subseteq \rmV(f_0,f_1,f_2) $, showing $\rmV(\ell_0,\ell_1,\ell_2)$ is one of the points in $Z$. The genericity of $Z$, together with the fact that the construction is invariant under permutation of the $18$ points, leads to a contradiction; we refer to the proof of \autoref{thm:main3.3} for details on this construction. Hence $\ell_0, \ell_1, \ell_2$ are linearly independent, and one concludes that $f_0,f_1,f_2$ do not have quadratic syzygies. An analogous argument will give the proof of \autoref{thm:main3.3}.

Alternatively, one can prove \autoref{prop:noquadsyz18} via a classical liaison argument. Suppose $s_0f_0 + s_1f_1 + s_2f_2 = 0$ is a quadratic syzygy of $f_0,f_1,f_2$, for some $s_j \in S_2$. Assume $f_0,f_1,f_2$ are chosen generically in $I(Z)_5$. Let $K \subseteq \mathbb{P}^2$ be the set of points defined by the ideal $\langle f_1, f_2 \rangle_S$. By Bézout's theorem, and \autoref{lem:IZ regular sequences}, $K$ is a complete intersection of $25$ reduced points, and $Z$ is a subset of $K$. In other words, $K = Z \mathbin{\dot{\cup}} Z'$ where $Z'$ is a set of $7$ points, called the \emph{liaison of $Z$ in $K$}. For every  $z \in Z'$, we have $s_0(z)f_0(z) = 0$. We have $f_0(z) \neq 0$, otherwise $z \in Z$, and we conclude that $s_0 \in I(Z')$. On the other hand, the theory of liaison guarantees that $Z'$ has no nonzero quadratic equations; the necessary results to prove this statement are presented in \autoref{sec: bound rmax general}. We conclude that $s_0 = 0$, and analogously for $s_1=s_2=0$. This argument generalizes to a proof of \autoref{thm: rmax in P2} and \autoref{thm: rmax in general}.

\subsection{The case \texorpdfstring{$r = r_{d,\max}$}{r=r\_d,max}} \label{sec:rdmax}
In this section, we prove \autoref{conj:chopped_hilbert_function_intro} and \autoref{conj:main} for the maximal number of points $r = r_{d,\max}$ in the plane. Fix $d \geq 5$ and let $Z \in (\mathbb{P}^2)^{r_{d,\max}}$ be a general collection of $r_{d,\max}$ points. By construction, $I(Z)$ is zero in degree smaller than $d$ and the chopped ideal $I(Z)_{\langle d \rangle}$ has $3$ generators of degree $d$. By \autoref{thm:chopCutsZ}, $I(Z)_{\langle d \rangle}$ defines $Z$ scheme-theoretically. Since the IGC holds in $\bbP^2$, the three generators of degree $d$ have no linear syzygies and $I(Z)$ has $d-4$ minimal generators of degree $d+1$. The minimal free resolution of $I(Z)$ is
\begin{equation} \label{eq:freeresrmax}
\begin{tikzcd}[sep=scriptsize]
	0 & I(Z) \arrow[l] & {\!\!\begin{array}{c}S[-d]^{3}\\\bigoplus\\S[-(d+1)]^{d-4}\end{array}\!\!} \arrow[l] & S[-(d+2)]^{d-2}\arrow[l, "B"'] & 0. \arrow[l]
\end{tikzcd}
\end{equation}
\autoref{conj:chopped_hilbert_function_intro} predicts the value for $h_{I_{\langle d \rangle}}$ in degree $d+e$:
\[
h_{I(Z)_{\langle d \rangle}}(d+e) \,=\, \min\left\{3\cdot \binom{e+2}{2}, \binom{d+e+2}{2} - \binom{d+2}{2}+3\right\}.
\]
For $e = d-3$, both arguments give the minimum:
\[
3 \cdot \binom{d-1}{2} \, = \, \frac{3d^2-9d+6}{2} \, = \,  \binom{2d-1}{2} - \binom{d+2}{2} + 3.
\]
Hence, we expect the map $\mu_{d-3} \colon S_{d-3} \otimes I(Z)_d \to I(Z)_{2d-3}$ to be an isomorphism.

\begin{theorem}\label{thm: rmax in P2}
Let $Z \in (\mathbb{P}^2)^{r_{d,\max}}$ be a collection of $r_{d,\max}$ general points, and let $I_{\langle d \rangle} = \langle I(Z)_d \rangle$ be its chopped ideal. The Hilbert function of ${I_{\langle d \rangle}}$ satisfies
\begin{alignat*}{2}
h_{I_{\langle d \rangle}}(t) &\,=\, 0               & \quad & \text{if $t<d$,} \\
h_{I_{\langle d \rangle}}(t) &\,=\, 3\cdot h_S(t-d) & \quad & \text{if $d\leq t \leq 2d-3$,} \\
h_{I_{\langle d \rangle}}(t) &\,=\, h_S(t)- r_{d,\max} & \quad & \text{if $t \geq 2d-3$.} 
\end{alignat*}
In this case, \autoref{conj:chopped_hilbert_function_intro} and \autoref{conj:main} hold and $\gamma_2(d,r_{d,\max}) = d-3$.
\end{theorem}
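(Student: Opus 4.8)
The plan is to establish the three-part formula for $h_{I_{\langle d \rangle}}$; everything else in the statement then follows by comparing explicit polynomials. Two of the three ranges are easy. For $t<d$ we have $I(Z)_t=0$, so $h_{I_{\langle d \rangle}}(t)=0$. For $t\geq 2d-3$ it suffices to treat $t=2d-3$: granting the middle range at that degree, $h_{I_{\langle d \rangle}}(2d-3)=3\,h_S(d-3)$, which equals $h_S(2d-3)-r_{d,\max}$ by the displayed identity $3\binom{d-1}{2}=\binom{2d-1}{2}-\binom{d+2}{2}+3$; hence $h_{S/I_{\langle d \rangle}}(2d-3)=r_{d,\max}$ with $2d-3>d$, and by the sticking statement in \autoref{thm: ESC lower bound} we get $h_{S/I_{\langle d \rangle}}(t)=r_{d,\max}$ for all $t\geq 2d-3$ (equivalently, $(I_{\langle d \rangle})_{2d-3}=I(Z)_{2d-3}$ and $I(Z)$ is generated in degrees $\leq d+1\leq 2d-3$). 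So everything reduces to the middle range $d\leq t\leq 2d-3$: I must show that for general $Z$ the multiplication maps $\mu_e\colon S_e\otimes_\KK I(Z)_d\to I(Z)_{d+e}$ are injective for $0\leq e\leq d-3$. Writing $I(Z)_d=\langle f_0,f_1,f_2\rangle_\KK$, this says $f_0,f_1,f_2$ admit no syzygy $s_0f_0+s_1f_1+s_2f_2=0$ with $\deg s_i\leq d-3$.

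I would prove this by the liaison argument sketched after \autoref{prop:noquadsyz18}. Choose the basis so that $f_1,f_2$ is a regular sequence, possible by \autoref{lem:IZ regular sequences}; then $K:=\rmV(f_1,f_2)$ is a zero-dimensional complete intersection of length $d^2$ with $(f_1,f_2)=I_K$ saturated. Since the base scheme $\rmV(I(Z)_d)$ is the reduced set $Z$ by \autoref{thm:trisec}(iii), a Bertini argument on the blow-up $\Bl_Z\PP^2$ (where $|I(Z)_d|$ becomes base-point free and two general members meet each exceptional curve in distinct points) shows that for general $f_1,f_2$ the scheme $K$ is a reduced set of $d^2$ points containing $Z$, with residual $Z':=K\setminus Z$ a reduced set of $d^2-r_{d,\max}=\tfrac12(d^2-3d+4)$ points disjoint from $Z$, linked to $Z$ via $K$. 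Now take any syzygy with $\deg s_i\leq d-3$. Then $s_0f_0=-(s_1f_1+s_2f_2)\in I_K\subseteq I(Z')$; on the other hand $\rmV(f_0)\cap K=\rmV(f_0,f_1,f_2)=Z$ by \autoref{thm:trisec}(iii) applied to $I_{\langle d \rangle}$, so $f_0$ vanishes nowhere on $Z'$ and hence $s_0\in I(Z')$. The Hilbert function of $Z'$ is determined by that of $Z$ through the classical linkage formula for zero-dimensional schemes in $\PP^2$: in terms of first differences $\bar h$, one has $\bar h_{Z'}(t)=\bar h_K(t)-\bar h_Z(2d-2-t)$, where $\bar h_K$ is the coefficient sequence of $(1+q+\dots+q^{d-1})^2$. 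Since $\regH S/I(Z)=d$, the sequence $\bar h_Z$ is supported in degrees $\leq d$, so for $t\leq d-3$ we get $\bar h_{Z'}(t)=\bar h_K(t)=t+1$, whence $h_{S/I(Z')}(d-3)=h_S(d-3)$ and $I(Z')_{\leq d-3}=0$. Therefore $s_0=0$; then $s_1f_1+s_2f_2=0$ with $f_1,f_2$ a regular sequence forces $s_1=s_2=0$, all syzygies of $f_1,f_2$ being Koszul and hence of degree $\geq d>d-3$. This gives the injectivity of $\mu_e$, hence $h_{I_{\langle d \rangle}}(t)=3\,h_S(t-d)$ on the middle range.

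With the Hilbert function in hand, the conjectural statements are a matter of bookkeeping. Since $h_S(d)-r_{d,\max}=3$, the alternating sum in \eqref{eq:expectedhf} at degree $d+e$ is $3h_S(e)-3h_S(e-d)+h_S(e-2d)=3h_S(e)$ for $e\leq d-1$, so $e_0=d-3$ and the two branches of \eqref{eq:expectedhf} match the middle and third ranges of the theorem; this is \autoref{conj:chopped_hilbert_function_intro}. Moreover a short computation gives $h_{S/I_{\langle d \rangle}}(d+e)-r_{d,\max}=e\,(d-e-3)$ for $0\leq e\leq d-3$, which is positive for $0<e<d-3$ and zero at $e=d-3$, so $\gamma_2(d,r_{d,\max})=d-3$, matching the value predicted by \autoref{conj:main}.

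The numerology and the two easy ranges are routine, and the heart of the proof is the middle range. Within it, the one genuinely delicate point is the geometric input: that for general $Z$ the complete intersection $K=\rmV(f_1,f_2)$ is \emph{reduced} with residual $Z'$ disjoint from $Z$, i.e.\ that the genericity of $Z$ together with the reducedness of $\rmV(I(Z)_d)$ propagates through the linkage so that the linkage formula and the regular-sequence step apply; once that is in place, the rest is formal.
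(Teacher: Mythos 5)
Your proof is correct, and it follows the same overall liaison strategy as the paper's argument for this theorem: reduce to showing $f_0,f_1,f_2$ admit no syzygy in degree $\leq d-3$, link $Z$ inside the complete intersection $K=\rmV(f_1,f_2)$ of two general members of $I(Z)_d$, deduce $s_0\in I(Z')$ for the residual $Z'=K\setminus Z$, and conclude once $I(Z')_{d-3}=0$. Where you genuinely diverge is in how that last vanishing is established: the paper's $\PP^2$-specific proof invokes the known minimal free resolution \eqref{eq:freeresrmax} (IGC/Hilbert--Burch in the plane) and the fact that the Hilbert--Burch matrix of the linked scheme $Z'$ is obtained from that of $Z$ by deleting the two rows corresponding to $f_1,f_2$ and transposing, so that the minimal generators of $I(Z')$ have degree $d-2$; you instead compute $h_{S/I(Z')}$ from the linkage formula for first differences, $\Delta h_{Z'}(t)=\Delta h_K(t)-\Delta h_Z(2d-2-t)$, using only $\regH(Z)=d$ and the symmetry of $\Delta h_K$. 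This is precisely the route the paper takes for the general-dimensional statement (\autoref{thm: rmax in general} via \autoref{prop: DhZ v DhZ'}), so your argument is essentially that proof specialized to $n=2$, whereas the paper deliberately included the more explicit Hilbert--Burch version as an alternative. What your route buys is independence from the Betti-table structure of $I(Z)$ (no appeal to \autoref{prop: MRC in P2} or to Sauer's description of the linked Hilbert--Burch matrix), at the cost of needing $K$ to be a reduced complete intersection so that the linkage formula applies; your Bertini-on-the-blow-up justification for this reducedness is sound and in fact supplies a detail the paper passes over by citing \autoref{lem:IZ regular sequences}, which literally only gives the regular-sequence property. Your closing bookkeeping (the identity $3\binom{d-1}{2}=\binom{2d-1}{2}-\binom{d+2}{2}+3$, the computation $h_{S/I_{\langle d\rangle}}(d+e)-r_{d,\max}=e(d-e-3)$, and the sticking argument from \autoref{thm: ESC lower bound} for $t\geq 2d-3$) correctly yields the conjectural statements and $\gamma_2(d,r_{d,\max})=d-3$.
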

\begin{proof}
    It suffices to show that the three generators $f_0,f_1,f_2$ of the chopped ideal do not have syzygies in degree $d-3$. This guarantees that the inequality in \autoref{thm: ESC lower bound} is a pointwise upper bound, and in turn that equality holds. Suppose $(s_0,s_1,s_2)$ is a syzygy of degree $d-3$:
    \[
    s_0f_0 + s_1f_1 + s_2 f_2 = 0 \qquad \text{for some } s_i \in S_{d-3}.
    \]
    We are going to prove that $s_0 = s_1=s_2 = 0$. By \autoref{lem:IZ regular sequences} we may assume that $f_1,f_2$ generate a complete intersection ideal defining a set $K$ of $d^2$ distinct points in $\bbP^2$. The set $K$ contains $Z$, and we write $Z' = K \setminus Z$ for the complement of $Z$ in $K$. 
    
    It suffices to show that $I(Z')_{d-3} = 0$. Indeed, for every $z \in Z'$ we have $f_1(z) = f_2(z) = 0$, which implies $f_0(z) s_0(z) = 0$. But $f_0(z) \neq 0$ because by \autoref{thm:chopCutsZ} the chopped ideal defines $Z$ scheme-theoretically and $z \notin Z$. Hence $s_0 \in I(Z')_{d-3}$. If $I(Z')_{d-3} = 0$, we obtain $s_0 = 0$. This implies $s_1 = s_2 = 0$ as well, because $s_1,s_2$ defines a syzygy of the complete intersection $\langle f_1,f_2 \rangle_{S}$, which does not have non-trivial syzygies in degree smaller than $d$.
    
    We are left with showing that $I(Z')_{d-3} = 0$. The Hilbert-Burch matrix $B'$ of $Z'$ can be obtained from the Hilbert-Burch matrix $B$ of $Z$ in \eqref{eq:freeresrmax} as follows; see, e.g., \cite[Prop.~1.3]{Sauer1985:PointsP2}. The entries of $B$ in its first three rows are quadrics, and the ones in the remaining $d-4$ rows are linear forms. The second and third row of $B$ correspond to $f_1, f_2$. The Hilbert-Burch matrix $B'$ is the transpose of the submatrix obtained from $B$ by removing the two rows corresponding to $f_1,f_2$. Therefore $B'$ is a $(d-2) \times (d-3)$ matrix whose first column consists of quadratic forms, and the remaining $d-4$ columns consist of linear forms. The maximal minors of $B'$ have degree $d-2$, and they are minimal generators of $I(Z')$ by the Hilbert-Burch Theorem. In particular, $I(Z')_{d-3} = 0$, as desired.
\end{proof}

\autoref{thm: rmax in P2} is a special version of  \autoref{thm: rmax in general}, whose proof resorts to liaison theory in higher dimension and is less explicit. Therefore, we chose to include both proofs. 

\autoref{thm: rmax in P2} allows us to provide an upper bound on the saturation gap for any set of general points in $\bbP^2$. Let $r \leq r_{d,\max}$ and fix $r$ general points $Z$ in $\mathbb{P}^2$. By definition, $\gamma_2(d,r) = \min \set{ e \in \mathbb{Z}_{>0} | h_{S/I_{\langle d \rangle}}(d+e) = r }$, where $I_{\langle d \rangle} = \langle I(Z)_d \rangle_S$ for $r$ general points $Z$.  
\begin{corollary} \label{cor:boundgap2}
For $r\leq r_{d,\max}$ general points in the plane, the saturation gap $\gamma_2(d,r)$ is at most $d-3$. In particular, the alternating sum in \autoref{conj:chopped_hilbert_function_intro} reduces to a single summand.
\end{corollary}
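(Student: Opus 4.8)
We must show $\gamma_2(d,r)\le d-3$ for every $r\le r_{d,\max}$. The plan is to deduce this from \autoref{thm: rmax in P2} by descending induction on $r$, the inductive step passing from a general set of $r+1$ points to a general set of $r$ points obtained by deleting one point. The base case $r=r_{d,\max}$ is precisely \autoref{thm: rmax in P2}, which gives $\gamma_2(d,r_{d,\max})=d-3$. For the step I will prove the monotonicity statement $\gamma_2(d,r)\le\gamma_2(d,r+1)$ for $1\le r<r_{d,\max}$. So fix such an $r$, let $Z^+$ be a general set of $r+1$ points --- general enough that a chosen $r$-subset $Z$ is also general and that $\gamma_2(d,Z^+)=\gamma_2(d,r+1)=:e$, by \autoref{prop: semicontinuity multiplication map} --- and write $\{p\}=Z^+\setminus Z$. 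Then $e\ge1$, and since $d+e\ge d\ge\regH(S/I(Z^+))$, the definition of $\gamma_2(d,Z^+)$ gives $(I(Z^+)_{\langle d \rangle})_{d+e}=I(Z^+)_{d+e}$.

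The heart of the argument is a codimension-one sandwich. From $I(Z^+)_d\subseteq I(Z)_d$ and $(I(Z)_{\langle d \rangle})_{d+e}=S_e\cdot I(Z)_d$ we obtain
\[
I(Z^+)_{d+e}\;=\;(I(Z^+)_{\langle d \rangle})_{d+e}\;\subseteq\;(I(Z)_{\langle d \rangle})_{d+e}\;\subseteq\;I(Z)_{d+e}.
\]
Since $r\le r_{d,\max}<h_S(d)$ and $d+e\ge d\ge\regH(S/I(Z))$, both $I(Z)$ and $I(Z^+)$ have reached their Hilbert polynomial in degree $d+e$, so $\dim I(Z)_{d+e}-\dim I(Z^+)_{d+e}=1$; hence $(I(Z)_{\langle d \rangle})_{d+e}$ equals $I(Z^+)_{d+e}$ or $I(Z)_{d+e}$. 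The first alternative would give $S_e\cdot I(Z)_d\subseteq I(Z^+)_{d+e}\subseteq I(\{p\})$, and then, choosing $g\in S_e$ with $g(p)\ne0$ (possible since $e\ge1$), every $f\in I(Z)_d$ would vanish at $p$, i.e.\ $p\in\rmV(I(Z)_d)$; but $r<r_{d,\max}<h_S(d)-n$, so \autoref{thm:chopCutsZ} gives $\rmV(I(Z)_d)=Z$, while $p\notin Z$ --- a contradiction. Therefore $(I(Z)_{\langle d \rangle})_{d+e}=I(Z)_{d+e}$, that is $h_{S/I(Z)_{\langle d \rangle}}(d+e)=r$, whence $\gamma_2(d,Z)\le e$ and, again by \autoref{prop: semicontinuity multiplication map}, $\gamma_2(d,r)\le e=\gamma_2(d,r+1)$, completing the induction. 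For the final assertion: a short computation (using $r\le r_{d,\max}$) shows that the cutoff $e_0$ appearing in \autoref{conj:chopped_hilbert_function_intro} satisfies $e_0\le d-2$, so every exponent $e<e_0$ in \eqref{eq:expectedhf} has $d+e-kd<0$ for all $k\ge2$, and the alternating sum collapses to its single term $h_S(e)\cdot(h_S(d)-r)$.

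I do not expect a serious obstacle. The only delicacy is the routine bookkeeping that a general $(r+1)$-point configuration restricts to a general $r$-point configuration, and that both can be taken inside the dense open locus where $\gamma_2(d,Z)=\gamma_2(d,r)$; the genuine geometric ingredient --- that $I(Z)_d$ cuts out exactly $Z$ as a reduced scheme when $r<h_S(d)-n$ --- is \autoref{thm:chopCutsZ}, already at our disposal. The induction succeeds because deleting a point is the \enquote{generic} way to decrease $r$: the one extra dimension of $I(Z)_{d+e}$ over $I(Z^+)_{d+e}$ is witnessed by a degree-$d$ form vanishing on $Z$ but not at the deleted point, which is exactly what makes the sandwich collapse to the right end.
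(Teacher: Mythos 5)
Your proof is correct and takes essentially the same route as the paper, which deduces \autoref{cor:boundgap2} from \autoref{cor:boundgapgeneral} by exactly this reverse induction on $r$: base case the $r_{d,\max}$ theorem, inductive step the monotonicity of the gap via the codimension-one comparison of $S_{e}\cdot I(Z)_d$ inside $I(Z)_{d+e}$. The only cosmetic difference is how the strict inclusion is witnessed: you rule out the lower alternative by contradiction using $\rmV(I(Z)_d)=Z$ from \autoref{thm:chopCutsZ}, whereas the paper directly exhibits the element $fh$ with $f\in I(Z_{r-1})_d\setminus I(Z_r)_d$ and $h\in S_{e_0}$ not vanishing at the extra point.
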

\autoref{cor:boundgap2} is a special case of \autoref{cor:boundgapgeneral} below.

\subsection{The case \texorpdfstring{$r = r_{d,{\min}}$}{r=d,min} for odd \texorpdfstring{$d$}{d}} \label{sec:rdmin}

Let $d = 2\delta+1$ be odd, and let $Z$ be a set of $r =  r_{d,\min} = 2(\delta+1)^2$ general points in $\bbP^2$. By \autoref{prop: MRC in P2}, $I(Z)$ is generated by $\delta+1$ forms of degree $d$ and $1$ form of degree $d+1$: 
\[
I(Z) \,=\, \langle f_0 \vvirg f_\delta , g \rangle _S,
\]
with $f_0 \vvirg f_\delta \in S_d$ and $g \in S_{d+1}$. In this section we prove the following result.

\begin{theorem} \label{thm:main3.3}
Let $d = 2 \delta + 1$ and let $Z \in (\mathbb{P}^2)^{r_{d,\min}}$ be a collection of $r_{d,\min} = 2(\delta+1)^2$ general points. The Hilbert Function of $I_{\langle d \rangle} = \langle I(Z)_d \rangle_S$ satisfies
\begin{align*}
    h_{S/I_{\langle d \rangle}}(d) \,&=\, r_{d,\min} , \\
    h_{S/I_{\langle d \rangle}}(d+1) \,&=\, r_{d,\min} +1 ,  \\
    h_{S/I_{\langle d \rangle}}(t) \,&=\, r_{d,\min} \phantom{+1}\quad\text{if $t \geq d+2$.} 
\end{align*}
In this case, \autoref{conj:chopped_hilbert_function_intro} and \autoref{conj:main} hold and $\gamma_2(d,r_{d,\min}) = 2$.
\end{theorem}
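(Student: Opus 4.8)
The plan is to read off the minimal free resolution of $S/I(Z)$ from the Minimal Resolution Conjecture in $\PP^2$ (\autoref{prop: MRC in P2}, here a theorem), use it to reduce all three asserted Hilbert-function values to a single statement about the bottom row of the Hilbert--Burch matrix, and then prove that statement by the genericity-plus-monodromy argument sketched for the first proof of \autoref{prop:noquadsyz18}. We are in the range $d = 2\delta+1 \ge 5$, so $\delta \ge 2$; then $r = r_{d,\min} = 2(\delta+1)^2$ satisfies $r < h_S(d) - 2$, so \autoref{thm:chopCutsZ} gives $\rmV(I_{\langle d \rangle}) = Z$ as reduced schemes. An elementary computation gives $h_S(d) - r = \delta+1$ and $h_S(d+1) - 3(\delta+1) - r = 1$; plugging into \autoref{prop: MRC in P2} yields $\beta_{1,d} = \delta+1$, $\beta_{1,d+1} = 1$, $\beta_{2,d+1} = 0$, $\beta_{2,d+2} = \delta+1$, hence the minimal free resolution
\[
0 \leftarrow S/I(Z) \leftarrow S \leftarrow S[-d]^{\delta+1} \oplus S[-(d+1)] \xleftarrow{\,B\,} S[-(d+2)]^{\delta+1} \leftarrow 0,
\]
with Hilbert--Burch matrix $B \in S^{(\delta+2)\times(\delta+1)}$ whose first $\delta+1$ rows --- the degree-$d$ generators $f_0,\dots,f_\delta$ of $I_{\langle d \rangle}$ spanning $I(Z)_d$ --- have quadratic entries and whose last row --- the degree-$(d+1)$ generator $g$ of $I(Z)$ --- consists of linear forms $c_0,\dots,c_\delta$.

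The value $h_{S/I_{\langle d \rangle}}(d) = r$ is immediate from \eqref{eq:genericHF}. Next, $(I_{\langle d \rangle})_{d+1} = S_1 I(Z)_d$ has codimension $\beta_{1,d+1} = 1$ in $I(Z)_{d+1}$, so $h_{S/I_{\langle d \rangle}}(d+1) = r+1$. For the remaining degrees it suffices to prove $(I_{\langle d \rangle})_{d+2} = I(Z)_{d+2}$: as all generators of $I(Z)$ have degree at most $d+1$, this equality then propagates to every $t \ge d+2$ and forces $h_{S/I_{\langle d \rangle}}(t) = h_{S/I(Z)}(t) = r$. Since $I(Z)_{d+2} = S_2 I(Z)_d + S_1 g$ while $(I_{\langle d \rangle})_{d+2} = S_2 I(Z)_d$, this is equivalent to $S_1 g \subseteq S_2 I(Z)_d$. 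Reading column $j$ of $B$ as a relation $\sum_i (B_{ij}) f_i + c_j g = 0$ gives $c_j g = -\sum_i (B_{ij}) f_i \in S_2 I(Z)_d$ for each $j$, so it is enough to show that the linear forms $c_0,\dots,c_\delta$ span $S_1$, i.e. $\rmV(c_0,\dots,c_\delta) = \emptyset$ in $\PP^2$. (Equivalently: the forms $f_0,\dots,f_\delta$ have exactly $(\delta+1) - \rk(c_\bullet)$ quadratic syzygies, and the prediction of \autoref{conj:chopped_hilbert_function_intro} is that this equals the dimension count $\delta-2$, i.e. $\rk(c_\bullet)=3$.)

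The heart of the proof is therefore $\rmV(c_0,\dots,c_\delta) = \emptyset$. First, this locus is intrinsic to $I(Z)$: for degree reasons the linear row of $B$ is unique up to rescaling and $\KK$-linear column operations, and neither changes $\rmV(c_\bullet)$. Each $f_i$ is a nonzero scalar multiple of the maximal minor of $B$ obtained by deleting a quadratic row, hence a determinant that uses the linear row; so $\rmV(c_0,\dots,c_\delta) \subseteq \rmV(f_0,\dots,f_\delta) = \rmV(I_{\langle d \rangle}) = Z$ set-theoretically. A nonzero linear form vanishes along a line, which cannot lie in the finite set $Z$, so $\rk(c_\bullet) \ge 2$; and if $\rk(c_\bullet) = 2$ then $\rmV(c_\bullet)$ is a single point of $Z$ selected algebraically from the general unordered configuration $Z$. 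That is impossible: the incidence variety $\{(Z,p) : p \in Z\}$ is irreducible --- it is the image of the irreducible variety $(\PP^2)^r_{\mathrm{distinct}}$ under $(z_1,\dots,z_r) \mapsto (\{z_1,\dots,z_r\}, z_1)$ --- and finite of degree $r > 1$ over the space of configurations, so it has transitive monodromy and no rational section, whereas $Z \mapsto \rmV(c_\bullet)(Z)$ would be one. Hence $\rk(c_\bullet) = 3$, completing the computation of $h_{S/I_{\langle d \rangle}}$.

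Finally, substituting $h_S(d)-r = \delta+1$ and $d = 2\delta+1$ into \autoref{conj:chopped_hilbert_function_intro} confirms (for $\delta \ge 2$) that $e_0 = 2$ and that the conjecture's values coincide with the ones just proved, and into \autoref{conj:main} that $\gamma_2(d,r_{d,\min}) = 2$. I expect the main obstacle to be this last genericity step, namely making the monodromy argument and the well-definedness of $\rmV(c_\bullet)$ from the non-canonical matrix $B$ fully rigorous --- exactly the point deferred in the sketch of \autoref{prop:noquadsyz18}. A liaison-theoretic alternative, in the spirit of the second proof of \autoref{prop:noquadsyz18} and of \autoref{thm: rmax in P2} (link $Z$ inside the complete intersection cut out by two general elements of $I(Z)_d$ and control the residual scheme), seems possible but less clean here, since for $r_{d,\min}$ the residual scheme is more opaque than in the $r_{d,\max}$ case.
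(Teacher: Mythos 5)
Your proposal is correct and takes essentially the same route as the paper's proof: the MRC/Hilbert--Burch resolution with $\beta_{1,d}=\delta+1$, $\beta_{1,d+1}=1$, the well-defined span $L_Z$ of the linear row of $B$, the inclusion $\rmV(L_Z)\subseteq Z$ via the maximal minors, and a monodromy/symmetry argument showing no algebraic rule can single out one point of a general configuration, forcing $\dim_\KK L_Z=3$. The only cosmetic differences are that you reduce degree $d+2$ to ``$c_0,\dots,c_\delta$ span $S_1$, hence $S_1g\subseteq S_2I(Z)_d$'' by reading the columns of $B$ as syzygies, whereas the paper equivalently counts quadratic syzygies ($\delta-2$ versus $\delta-1$), and you run the monodromy on the incidence variety over unordered configurations rather than on the subvarieties $Y_j\subseteq(\PP^2)^r$.
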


The proof uses the minimal free resolution of $I(Z)$, obtained from \autoref{prop: MRC in P2}
\[
\begin{tikzcd}[sep=scriptsize]
	0 & I(Z) \arrow[l] & {\!\!\begin{array}{c}S[-d]^{\delta+1}\\\bigoplus\\S[-(d+1)]\end{array}\!\!} \arrow[l] & S[-(d+2)]^{\delta+1}\arrow[l, "B"'] & 0. \arrow[l]
\end{tikzcd}
\]
The Hilbert-Burch matrix $B$ has the following form:  
\[
B \,=\, \left( \begin{array} {ccc}
q_{00} & \cdots & q_{0\delta } \\
\vdots & & \vdots \\
q_{\delta 0} & \cdots & q_{\delta\delta} \\
\ell_0 &  \cdots & \ell_\delta
\end{array}
\right),
\]
for some quadratic forms $q_{ij} \in S_2$ and linear forms $\ell_j \in S_1$. The degree $e$ syzygies of $f_0,\ldots, f_\delta$ are the elements of the $\KK$-vector space 
\[
{\rm Syz}(f_0,\ldots, f_\delta)_e \,=\, \Set{ (s_0, \ldots, s_\delta) \in (S_e)^{\delta+1} | s_0 f_0 + \dots + s_\delta f_\delta  =  0 }.
\]
\begin{proof}[Proof of \autoref{thm:main3.3}]
The fact that $h_{S/I_{\langle d \rangle}}(d) = h_{S/I}(d) = r_{d, \min}$ follows by \eqref{eq:genericHF}. The IGC holds for $n = 2$, and it implies $h_{S/I_{\langle d \rangle}}(d+1) = r_{d, \min} + 1$. 

For the statement on $h_{S/I{\langle d \rangle}}(t)$ for $t \geq d + 2$, observe that $(\delta + 1) \cdot h_S(2) = h_S(d+2) -  r_{d,\min} + ( \delta - 2)$.  Hence, it suffices to show that the forms $f_0 \vvirg f_\delta$ generating the chopped ideal $I_{\langle d \rangle} = \langle I(Z)_d \rangle_S$ have exactly $\delta-2$ quadratic syzygies, i.e.
\[ 
\dim_\KK \Syz (f_0, \ldots, f_\delta)_2 \,=\, \delta - 2.
\]
It is clear that $\dim_\KK {\rm Syz}(f_0, \ldots, f_\delta)_2 \geq  h_I(d)h_S(2) - h_I(d+2) = \delta - 2$; this also follows from \autoref{thm: ESC lower bound}. We show that $f_0, \ldots, f_\delta$ cannot have $\delta - 1$ syzygies.

The linear span $L_Z \coloneqq \langle \ell_0, \ldots, \ell_\delta \rangle_\KK$ of the linear forms in the last row of $B$ does not depend on the choice of the minimal free resolution. This is a consequence of \cite[Thm.~20.2]{Eis:CommutativeAlgebra}. 

If $f_0 \vvirg f_\delta$ have $\delta-1$ quadratic syzygies for a general choice of $Z$, then $\dim_\KK L_Z$ is at most 2, which implies that the variety $\rmV(L_Z)$ is non-empty. Moreover, by the Hilbert-Burch Theorem, the ideal generated by $L_Z$ contains $I_{\langle d \rangle} = \langle f_0 \vvirg f_\delta \rangle_S$, which cuts out $Z$ scheme-theoretically by \autoref{thm:chop_map}. Hence $\dim_\KK L_Z = 2$ and $\rmV(L_Z)$ is one of the points in $Z$. Define
\[
\psi \colon (\bbP^2 )^{r} \dashto \bbP^2, \qquad \psi(Z) \coloneqq \rmV(L_Z).
\]
This is a rational map: the coordinates of $L_Z$, and hence of the point it defines, can be expressed as polynomial functions of the coordinates of the configuration $Z \in (\bbP^2 )^{r}$, for instance via successive applications of Cramer's rule. Moreover, we showed that $\psi(Z) \in Z$. By definition, $\psi$ is invariant under the action of the symmetric group $\frakS_r$ permuting the factors of $(\bbP^2 )^{ r}$. Consider the subvarieties of $(\bbP^2 )^{r}$ defined by 
\[
Y_j \,\coloneqq\, \overline{\Set{ Z =(z_1 \vvirg z_r) \in \operatorname{Dom}(\psi) | \psi(Z) = z_j } }.
\]
We have $(\bbP^2 )^{r} = \bigcup_{j=1}^r Y_j$. Since $(\bbP^2 )^{r}$ is irreducible, we have $Y_j = (\bbP^2 )^{ r}$ for at least one $j$. On the other hand, $\frakS_r$ invariance implies that if $Y_j = (\bbP^2 )^{r}$ for one $j$, then this must be true for all $j$'s. But any two $Y_j$ are distinct because generically $Z$ consists of distinct elements. This gives a contradiction showing that the map $\psi$ cannot exist. We obtain that $\dim_\KK L_Z = 3$ for general $Z$ and this concludes the proof.
\end{proof}

\section{The largest possible saturation gap} \label{sec: bound rmax general}

In this section, we consider a set $Z$ of $r = \binom{d+n}{n} - (n+1)$ general points in $\bbP^n$. \autoref{thm:chopCutsZ} guarantees this is the largest possible number of points such that the chopped ideal $I_{\langle d \rangle} = \langle I(Z)_d \rangle_S$ in the ring $S = \KK[x_0, \ldots, x_n]$ defines $Z$ scheme-theoretically. We will show that \autoref{conj:chopped_hilbert_function_intro} is true in this case:
\begin{theorem}\label{thm: rmax in general}
   Let $n, d$ be positive integers and let $Z \subseteq \bbP^n$ be a set of $r =  h_S(d) - (n+1)$ general points. The Hilbert function of the chopped ideal $I_{\langle d \rangle}= \langle I(Z)_d \rangle_S$ satisfies 
   \[
   h_{S/{I_{\langle d \rangle}}} ( d+e ) \,=\, \sum_{k \geq 0} (-1)^{k} \cdot h_S(d+e - k d) \cdot \binom{n+1}{k} ,
   \]
   for $e \leq (n-1)d - (n+1)$. \autoref{conj:chopped_hilbert_function_intro} and \autoref{conj:main} hold with $\gamma_n(d,r) = (n-1)d-(n+1)$.
\end{theorem}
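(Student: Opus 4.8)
The plan is to realize $I_{\langle d \rangle}$ as an almost complete intersection and read off its Hilbert function from the Koszul complex, using liaison to control the single Koszul homology that obstructs the computation. Write $I_{\langle d \rangle} = \langle f_0, \dots, f_n\rangle_S$ with $f_0,\dots,f_n$ a general basis of $I(Z)_d$, which has dimension $h_S(d)-r=n+1$. By \autoref{thm:chopCutsZ} (applied with $r<h_S(d)-n$), $\rmV(I_{\langle d \rangle})=Z$ as reduced schemes, so $\dim S/I_{\langle d \rangle}=1$ and $\operatorname{grade} I_{\langle d \rangle}=n$. Let $\bfK_\bullet=\bfK_\bullet(f_0,\dots,f_n)$, with $\bfK_i=S(-id)^{\binom{n+1}{i}}$. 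Since $\bfK_\bullet$ is built on $n+1$ elements generating an ideal of grade $n$, one has $H_i(\bfK_\bullet)=0$ for $i\geq 2$, $H_0(\bfK_\bullet)=S/I_{\langle d \rangle}$, and $H_1(\bfK_\bullet)=\Syz(f_0,\dots,f_n)/(\text{Koszul syzygies})$. Comparing Euler characteristics in each internal degree $m$ gives
\[
h_{S/I_{\langle d \rangle}}(m) \;=\; \sum_{k\geq 0}(-1)^k\binom{n+1}{k}h_S(m-kd)\;+\;\dim_\KK H_1(\bfK_\bullet)_m,
\]
so the asserted formula (for $m=d+e$, $e\leq (n-1)d-(n+1)$) is equivalent to $H_1(\bfK_\bullet)_m=0$ for $m\leq nd-(n+1)$.

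To prove that vanishing I would invoke liaison. By \autoref{lem:IZ regular sequences} we may assume $f_1,\dots,f_n$ is a regular sequence; put $K=\rmV(f_1,\dots,f_n)$, a complete intersection of degree $d^n$ containing $Z$, and let $Z'$ be the residual scheme, $I(Z')=\langle f_1,\dots,f_n\rangle_S:I(Z)$; for general choices the linkage is geometric, so $\langle f_1,\dots,f_n\rangle_S:f_0=I(Z')$. Since $\bfK_\bullet(f_0,\dots,f_n)=\bfK_\bullet(f_1,\dots,f_n)\otimes_S\bfK_\bullet(f_0)$ and $\bfK_\bullet(f_1,\dots,f_n)$ resolves $S/I(K)$, the complex $\bfK_\bullet$ is quasi-isomorphic to the mapping cone of $f_0\colon (S/I(K))(-d)\to S/I(K)$, whence
\[
H_1(\bfK_\bullet)\;\cong\;\bigl(\langle f_1,\dots,f_n\rangle_S:f_0\,/\,\langle f_1,\dots,f_n\rangle_S\bigr)(-d)\;=\;\bigl(I(Z')/I(K)\bigr)(-d).
\]
Liaison duality (Peskine--Szpiro) identifies $I(Z')/I(K)\cong\omega_{S/I(Z)}(-\sigma)$, where $\sigma=nd-(n+1)$ is the $a$-invariant of the $1$-dimensional Gorenstein ring $S/I(K)$; hence $H_1(\bfK_\bullet)\cong\omega_{S/I(Z)}\bigl(-(n+1)(d-1)\bigr)$.

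It remains to compute the Hilbert function of $\omega_{S/I(Z)}$. Graded local duality gives $\dim_\KK(\omega_{S/I(Z)})_t=\dim_\KK\Hm^1(S/I(Z))_{-t}$, and the comparison sequence $0\to S/I(Z)\to\bigoplus_m\rmH^0(\calO_Z(m))\to\Hm^1(S/I(Z))\to 0$ from the proof of \autoref{thm:regCM}, together with $\rmH^0(\calO_Z(m))\cong\KK^r$ and the Hilbert function \eqref{eq:genericHF} of the general $Z$, yields $\dim_\KK(\omega_{S/I(Z)})_t=r-\min\{h_S(-t),r\}$. Substituting $t=m-(n+1)(d-1)$, this vanishes exactly when $h_S((n+1)(d-1)-m)\geq r$, i.e. when $(n+1)(d-1)-m\geq d$, i.e. when $m\leq nd-(n+1)$ — precisely what was needed. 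Plugging back in proves the displayed Hilbert function, and comparison with \eqref{eq:expectedhf} (using $h_S(d)-r=n+1$) gives \autoref{conj:chopped_hilbert_function_intro}. For \autoref{conj:main}, note that $\sum_k(-1)^k\binom{n+1}{k}h_S(m-kd)$ is the Hilbert function of the Artinian monomial complete intersection $\KK[y_0,\dots,y_n]/(y_0^d,\dots,y_n^d)$, which is unimodal (Stanley) and symmetric about its socle degree $(n+1)(d-1)$; it therefore equals $h_S(d)-(n+1)=r$ at both $m=d$ and $m=nd-(n+1)$ and exceeds $r$ strictly in between, so $\gamma_n(d,r)=(n-1)d-(n+1)$, with the value $r$ persisting thereafter by \autoref{thm: ESC lower bound}.

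\textbf{Expected main obstacle.} The delicate point is the liaison step: justifying that $f_1,\dots,f_n$ may be chosen to be a regular sequence for which $\langle f_1,\dots,f_n\rangle_S:f_0$ is exactly $I(Z')$ (geometric linkage for the general configuration), pinning down the twist by identifying $\sigma=nd-(n+1)$ as the $a$-invariant of $S/I(K)$, and applying the linkage duality isomorphism $I(Z')/I(K)\cong\omega_{S/I(Z)}(-\sigma)$ with the correct grading. Once that graded isomorphism is in hand, the remainder is bookkeeping with Hilbert series; the $n=2$ version of this argument is the liaison proof behind \autoref{prop:noquadsyz18} and \autoref{thm: rmax in P2}, made uniform in $n$ by using the canonical module in place of the Hilbert--Burch matrix.
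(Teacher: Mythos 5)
Your proposal is correct, and it takes a genuinely different route from the paper's. The paper also links $Z$ to the residual $Z'$ inside the complete intersection $K=\rmV(f_1,\dots,f_n)$, but it only uses the numerical consequence of liaison, namely the relation $\Delta h_Z(t)+\Delta h_{Z'}(\rho-t)=\Delta h_K(t)$ of \autoref{prop: DhZ v DhZ'}, to compute $\dim I(Z')_e$ in low degrees; it then bounds the syzygy space of $f_0,\dots,f_n$ by projecting a syzygy onto its $f_0$-coefficient (image inside $I(Z')_e$, kernel the Koszul syzygies of the complete intersection), and it handles the endpoint and the strict inequality below it by a separate count of standard monomials of $\langle x_0^d,\dots,x_n^d\rangle$. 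You instead compute the first Koszul homology of all $n+1$ forms exactly: the mapping-cone identification $H_1\cong\bigl(I(Z')/I(K)\bigr)(-d)$, the Peskine--Szpiro duality $I(Z')/I(K)\cong\omega_{S/I(Z)}(-\sigma)$ with $\sigma=nd-(n+1)$, and graded local duality for general points give the deviation of $h_{S/I_{\langle d\rangle}}$ from the Koszul prediction in \emph{every} degree, not just for $e\le (n-1)d-(n+1)$. That is stronger and quite clean, at the cost of invoking linkage duality and the canonical module, machinery the paper avoids by sticking to the $\Delta h$ identity it quotes.

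Two points to tighten. First, your ``vanishes exactly when $(n+1)(d-1)-m\ge d$'' uses $h_S(t)\ge r\iff t\ge d$, i.e.\ $h_S(d-1)<r$; this holds whenever $d$ is the Hilbert regularity of $Z$ (the setting of the conjectures), and for the implication you actually need ($m\le nd-(n+1)$ forces vanishing) only $h_S(t)\ge h_S(d)>r$ for $t\ge d$ is used, so nothing is lost. Second, symmetry and unimodality of the Hilbert function of $\KK[y_0,\dots,y_n]/(y_0^d,\dots,y_n^d)$ by themselves only give $h_A(d+e)\ge r$ for $0<e<(n-1)d-(n+1)$ (a symmetric unimodal sequence may be constant away from its peak); the \emph{strict} inequality, which you need both for the exact value of $\gamma_n(d,r)$ and to identify the $e_0$ in \eqref{eq:expectedhf}, requires a short extra argument, e.g.\ strict increase of $h_A$ up to the middle degree (induction on the number of variables via $h_A^{(n+1)}(m)-h_A^{(n+1)}(m-1)=h_A^{(n)}(m)-h_A^{(n)}(m-d)$) or the paper's divisor count. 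The paper is similarly terse at this spot, but you should not attribute the strictness to unimodality alone.
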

As a consequence of \autoref{thm: rmax in general}, we will obtain an upper bound on the saturation gap $\gamma_n(d,r)$ for all $r < h_S(d)-n$ in \autoref{cor:boundgapgeneral}.

The proof of \autoref{thm: rmax in general} relies on a fundamental fact in the theory of liaison. Given the Hilbert function of a set of points $Z$, one can compute the Hilbert function of the complementary set of points $Z' = K \setminus Z$ in a complete intersection $K \supseteq Z$. We record this fact in \autoref{prop: DhZ v DhZ'} below. In order to state it precisely, we introduce the following notation. The function $\Delta h_Z(t) = h_Z(t) - h_Z(t-1)$, with $h_Z = h_{S/I(Z)}$, is the \emph{first difference} of the Hilbert function of $Z$. Often, $\Delta h_Z(t)$ is called the $h$-vector of $Z$ and it is recorded as the sequence of its non-zero values. We record some immediate properties, see e.g. \cite{Chi:HilbertFunctionsTensorAn}.
\begin{lemma}\label{lemma: basic DhZ}
   For a finite set of points $Z \subseteq \bbP^n$, we have 
   \begin{enumerate}[label=\textup{(\roman*)}]
       \item $h_Z(t) = \sum_{t'=0}^t \Delta h_Z(t')$;
       \item $\regH(Z) = \max\set{ t | \Delta h_Z(t) > 0 }$; 
       \item if $I(Z)_t = 0$ then $\Delta h_Z(t') = \binom{t'+n-1}{n-1}$ for $t' \leq t$.
   \end{enumerate}
\end{lemma}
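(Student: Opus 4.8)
The plan is to derive all three bullets from elementary properties of the Hilbert function of a reduced finite scheme; the only substantive input is that $S/I(Z)$ is one-dimensional and Cohen--Macaulay, and everything else is bookkeeping.

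The first bullet is simply the telescoping identity $\sum_{t'=0}^t \big(h_Z(t') - h_Z(t'-1)\big) = h_Z(t) - h_Z(-1)$ together with the trivial vanishing $h_Z(-1) = 0$. For the third bullet I would first note that $I(Z)_t = 0$ forces $I(Z)_{t'} = 0$ for every $t' \leq t$: if $0 \neq f \in I(Z)_{t'}$ then $x_0^{\,t-t'} f$ is a nonzero element of $I(Z)_t$ since $S$ is a domain. Hence $h_Z(t') = h_S(t') = \binom{t'+n}{n}$ for all $t' \leq t$, and Pascal's rule gives $\Delta h_Z(t') = \binom{t'+n}{n} - \binom{t'+n-1}{n} = \binom{t'+n-1}{n-1}$, with the $t'=0$ case being $\Delta h_Z(0) = 1$ as expected.

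The middle bullet requires two observations. First, since $Z$ is reduced and zero-dimensional, $S/I(Z)$ is a one-dimensional Cohen--Macaulay ring whose Hilbert polynomial is the constant $\operatorname{HP}_Z \equiv r \coloneqq \#Z$; by definition $\regH(Z)$ is then the least degree from which $h_Z$ is constantly equal to $r$. Second, a general linear form $\ell$ is a non-zerodivisor on $S/I(Z)$ (the maximal ideal $\mathfrak m$ is not an associated prime), so the short exact sequence $0 \to (S/I(Z))(-1) \xrightarrow{\ell} S/I(Z) \to S/(I(Z)+(\ell)) \to 0$ shows $\Delta h_Z(t) = h_{S/(I(Z)+(\ell))}(t)$, the Hilbert function of a standard graded Artinian algebra $A$. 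In particular $\Delta h_Z(t) \geq 0$ for all $t$, and since $A$ is generated in degree $1$ one has $A_{t+1} = A_1 A_t$, so $A_t = 0 \Rightarrow A_{t+1} = 0$; thus $\Delta h_Z$ is eventually zero and, once it vanishes, stays zero. Writing $t_\ast \coloneqq \max\{t \mid \Delta h_Z(t) > 0\}$, the first bullet then shows $h_Z$ is constant for $t \geq t_\ast$, while $\Delta h_Z(t_\ast) > 0$ gives $h_Z(t_\ast - 1) < h_Z(t_\ast) = r$, so $h_Z$ is not yet stabilized at $t_\ast - 1$; hence $\regH(Z) = t_\ast$.

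I do not expect a genuine obstacle here: the lemma is essentially routine once the Cohen--Macaulayness of $S/I(Z)$ is invoked. The only points that warrant a word of care are that $h_Z$ stabilizes \emph{exactly} to $r = \#Z$ (which uses reducedness) and the ``once zero, stays zero'' behaviour of $\Delta h_Z$ — both handled by the short arguments above — so in practice I would keep the proof to a few lines and cite a standard reference such as \cite{Chi:HilbertFunctionsTensorAn} rather than belabour the details.
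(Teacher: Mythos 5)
Your proof is correct: the telescoping identity, the Pascal-rule computation in the range where $I(Z)_t=0$, and the Artinian reduction by a general linear form (a non-zerodivisor on $S/I(Z)$ since the ideal of a reduced point set is saturated) are exactly the standard arguments, and the key points you flag — nonnegativity of $\Delta h_Z$, the ``once zero, stays zero'' behaviour, and stabilization at $r=\#Z$ — are handled properly. The paper itself gives no proof, recording the lemma as immediate with a citation to the literature, so your write-up is a faithful expansion of precisely the argument it relies on.
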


For complete intersections $K$, the function $\Delta h_K$ has a symmetry property \cite[Ch.~17]{Eis:CommutativeAlgebra}.
\begin{lemma}\label{lemma: CI ideal}
    Let $K \subseteq \bbP^n$ be a set of $d_1 \cdots d_n$ points whose ideal is generated by a regular sequence $f_1 \vvirg f_n$, with $\deg(f_i) = d_i$. Then $\regH(K) = d_1 + \cdots + d_n - n$. In particular, if $d_1 = \cdots = d_n = d$, $\regH(K) = n(d-1)$. Moreover, $\Delta h_K$ is symmetric, that is, setting $\rho = \regH(K)$, we have $\Delta h_K(t) = \Delta h_K(\rho - t)$.
\end{lemma}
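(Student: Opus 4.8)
The plan is to reduce to an Artinian complete intersection by cutting with a general linear form, and then read off both $\regH(K)$ and the symmetry of $\Delta h_K$ directly from the resulting Hilbert series.

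First I would record that $A := S/(f_1,\dots,f_n)$ is a graded complete intersection, hence Cohen--Macaulay, of Krull dimension $(n+1)-n = 1$; since $I(K)$ is radical this is just $A = S/I(K)$. All associated primes of $A$ therefore have dimension $1$, so a general linear form $\ell\in S_1$ is a nonzerodivisor on $A$, i.e.\ $f_1,\dots,f_n,\ell$ is a regular sequence in $S$; permuting a regular sequence of homogeneous elements of positive degree, $\ell,f_1,\dots,f_n$ is also a regular sequence in $S$. From the exact sequence $0 \to A(-1) \xrightarrow{\ell} A \to A/\ell A \to 0$ I get $h_{A/\ell A}(t) = h_A(t)-h_A(t-1) = \Delta h_K(t)$ for all $t$. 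Writing $\bar S = S/(\ell)\cong\KK[y_1,\dots,y_n]$ and $\bar f_i$ for the image of $f_i$, the sequence $\bar f_1,\dots,\bar f_n$ is then a regular sequence in $\bar S$, so $A/\ell A \cong \bar S/(\bar f_1,\dots,\bar f_n)$ is an Artinian complete intersection in $n$ variables with generators of degrees $d_1,\dots,d_n$.

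Next I would compute its Hilbert series by iterating the short exact sequences $0 \to B(-d_i)\xrightarrow{\bar f_i} B \to B/\bar f_i B \to 0$, each $\bar f_i$ being a nonzerodivisor:
\[
\sum_{t\geq 0}\Delta h_K(t)\,z^t \;=\; \sum_{t\geq 0}h_{A/\ell A}(t)\,z^t \;=\; \frac{\prod_{i=1}^n(1-z^{d_i})}{(1-z)^n} \;=\; \prod_{i=1}^n\bigl(1+z+\dots+z^{d_i-1}\bigr).
\]
This is a polynomial in $z$ of degree $\sum_i(d_i-1)=\sum_i d_i-n$ with leading coefficient $1$, so $\Delta h_K$ is supported on $\{0,1,\dots,\sum_i d_i-n\}$ and takes the value $1$ at the top; by \autoref{lemma: basic DhZ}, $\regH(K)=\max\{t: \Delta h_K(t)>0\}=\sum_i d_i-n$, which equals $n(d-1)$ when all $d_i=d$. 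Finally, each factor $1+z+\dots+z^{d_i-1}$ is a palindromic polynomial of degree $d_i-1$, and a product of palindromic polynomials is palindromic (if $z^ap(1/z)=p(z)$ and $z^bq(1/z)=q(z)$ then $z^{a+b}(pq)(1/z)=(pq)(z)$), so with $\rho=\regH(K)$ the coefficient of $z^t$ equals that of $z^{\rho-t}$; this is exactly $\Delta h_K(t)=\Delta h_K(\rho-t)$.

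I do not expect a genuine obstacle here; the one step needing a little care is the reduction, namely that a single general $\ell$ is simultaneously a nonzerodivisor on $A$ and transverse enough that $\bar f_1,\dots,\bar f_n$ remains a regular sequence --- but both follow at once from the fact that $\ell,f_1,\dots,f_n$ is a regular sequence on the Cohen--Macaulay ring $S$ for general $\ell$. As an alternative to the palindromicity computation one could instead invoke that a complete intersection is Gorenstein and that a graded Artinian Gorenstein algebra has symmetric Hilbert function, but the direct Hilbert-series argument above is shorter and self-contained, and it also yields $\regH(K)$ on the way.
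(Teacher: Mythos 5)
Your proof is correct. Note that the paper does not prove this lemma at all; it simply cites \cite[Ch.~17]{Eis:CommutativeAlgebra}, and your argument --- pass to the Artinian reduction $A/\ell A$ by a general linear form (a nonzerodivisor since the $1$-dimensional complete intersection is Cohen--Macaulay, hence unmixed), compute its Hilbert series as $\prod_i(1+z+\dots+z^{d_i-1})$ from the Koszul relations, and read off both $\regH(K)=\sum_i d_i-n$ via \autoref{lemma: basic DhZ} and the symmetry from palindromicity of each factor --- is precisely the standard argument underlying that reference. The reduction step is handled correctly (regularity of $\ell,f_1,\dots,f_n$ after permutation gives both the nonzerodivisor property and that the $\bar f_i$ remain a regular sequence in $n$ variables), so the write-up is a complete, self-contained substitute for the citation.
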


The theory of liaison studies the relation between the distinct irreducible components (or union of such) of a complete intersection; we only illustrate one result in the context of ideals of points; we refer to \cite{MigNag:LiaisonRelatedTopics,Mig:IntroLiaison} for an extensive exposition of the subject. The following is a consequence of \cite[Prop.~5.2.1]{Mig:IntroLiaison}; see also \cite[Eqn.~(3)]{AngChi:DescriptionQuartics}.
\begin{proposition}\label{prop: DhZ v DhZ'}
Let $Z \subseteq \bbP^n$ be a set of points and let $f_1 \vvirg f_n \in I(Z)$ be homogeneous polynomials of degree $d_1, \ldots, d_n$ defining a smooth complete intersection $K$ of degree $d_1 \cdots d_n$. Let $Z' = K \setminus Z$ be the complement of $Z$ in $K$. Let $\rho = \regH(K) = d_1 + \cdots + d_n - n$. Then $\Delta h_{Z}(t) + \Delta h_{Z'}(\rho - t) = \Delta h_K(t)$.
\end{proposition}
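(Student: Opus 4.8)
The plan is to take a general hyperplane section so as to reduce everything to a statement about Hilbert functions of Artinian reductions, and then to recognize the claimed identity as an instance of Gorenstein duality in the Artinian complete intersection cut out by $f_1,\dots,f_n$ together with one extra linear form. Since $K$ is smooth it is in particular reduced, so $I(K) = (f_1,\dots,f_n)$; and since $Z$ and $Z'$ partition $K$ into disjoint reduced subschemes, $I(Z)\cap I(Z') = I(K)$ and the linkage relation $I(K):I(Z) = I(Z')$ holds. A general linear form $\ell$ avoids the finitely many points of $K$, hence is a nonzerodivisor on each of the one-dimensional Cohen--Macaulay rings $S/I(K)$, $S/I(Z)$, $S/I(Z')$; I set $A \coloneqq S/(I(K)+(\ell))$, which is cut out by the regular sequence $f_1,\dots,f_n,\ell$ of degrees $d_1,\dots,d_n,1$, hence is an Artinian complete intersection, so Gorenstein, with socle degree $\sum_i(d_i-1) = \rho$.

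The steps, in order, would be the following. (i) Because $\ell$ is a nonzerodivisor on $S/I(K)$, $S/I(Z)$ and $S/I(Z')$, quotienting by $\ell$ replaces Hilbert functions by their first differences: writing $\bar I(Z)$, $\bar I(Z')$ for the images of $I(Z)$, $I(Z')$ in $A$, we get $h_A = \Delta h_K$, $h_{A/\bar I(Z)} = \Delta h_Z$ and $h_{A/\bar I(Z')} = \Delta h_{Z'}$. (ii) The linkage relation descends to $A$, namely $\bar I(Z') = (0:_A \bar I(Z))$; I would derive this from the canonical-module description $I(Z')/I(K) \cong \Hom_{S/I(K)}(S/I(Z),\,S/I(K))$ (linkage inside the Gorenstein ring $S/I(K)$) by base changing along $S/I(K)\to A$, which is legitimate since $\ell$ is regular on $S/I(K)$ and on $S/I(Z)$, and since $\mathrm{Tor}^S_1(S/I(Z'),S/(\ell))=0$ identifies $(I(Z')/I(K))\otimes_S S/(\ell)$ with $\bar I(Z')\subseteq A$; alternatively step (ii) is exactly \cite[Prop.~5.2.1]{Mig:IntroLiaison}. (iii) In the Artinian Gorenstein algebra $A$ with socle degree $\rho$, the multiplication pairings $A_t\times A_{\rho-t}\to A_\rho\cong\KK$ are perfect, whence for any homogeneous ideal $J\subseteq A$ one checks $(0:_A J)_t = \{a\in A_t : a\perp J_{\rho-t}\}$ and therefore $\dim_\KK(0:_A J)_t = \dim_\KK A_t - \dim_\KK J_{\rho-t} = \dim_\KK(A/J)_{\rho-t}$. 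Applying this to $J = \bar I(Z)$ and combining with (i) and (ii) gives $\dim_\KK \bar I(Z')_t = \Delta h_Z(\rho-t)$. (iv) Reading the exact sequence $0\to\bar I(Z')\to A\to A/\bar I(Z')\to 0$ in degree $t$ then yields $\Delta h_K(t) = \Delta h_Z(\rho-t)+\Delta h_{Z'}(t)$; substituting $t\mapsto\rho-t$ and invoking the symmetry $\Delta h_K(\rho-t)=\Delta h_K(t)$ from \autoref{lemma: CI ideal} gives the asserted $\Delta h_Z(t)+\Delta h_{Z'}(\rho-t) = \Delta h_K(t)$.

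I expect the only delicate point to be step (ii): verifying that the colon ideal $I(K):I(Z)$, equivalently the canonical-module picture of linkage, commutes with the general hyperplane section, which amounts to keeping careful track of which modules $\ell$ is regular on and of the vanishing of the relevant $\mathrm{Tor}$ groups. Everything after that is a short bookkeeping computation with Hilbert functions in $A$. If one wishes to sidestep the homological check entirely, step (ii) can simply be quoted from standard liaison theory \cite{Mig:IntroLiaison}, after which the argument is purely the duality count of step (iii)--(iv) in the Artinian Gorenstein algebra $A$.
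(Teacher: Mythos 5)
Your argument is correct, but it follows a different route than the paper. The paper does not reprove the statement: it quotes \cite[Prop.~5.2.1]{Mig:IntroLiaison} and only indicates the underlying mechanism, namely the mapping-cone construction, by which the minimal free resolution of $I(Z')$ is obtained from that of $I(Z)$ and the Koszul complex resolving $I(K)$; this stronger resolution-level statement is in fact what the paper uses elsewhere (e.g.\ in the proof of \autoref{thm: rmax in P2}, where the Hilbert--Burch matrix of $Z'$ is read off from that of $Z$). Your proof instead establishes only the Hilbert-function identity, but does so by an essentially self-contained duality count: Artinian reduction by a general linear form $\ell$ (turning $h_K,h_Z,h_{Z'}$ into their first differences), descent of the linkage relation $I(K):I(Z)=I(Z')$ to $A=S/(I(K)+(\ell))$, and Poincar\'e duality in the Artinian complete intersection $A$ with socle degree $\rho$. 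All steps check out: the perfect-pairing computation $(0:_A J)_t=(J_{\rho-t})^\perp$ is valid because $J$ is an ideal, and the final substitution together with the symmetry of $\Delta h_K$ from \autoref{lemma: CI ideal} gives exactly the claim. You correctly isolate the only delicate point, the commutation of linkage with the Artinian reduction; the clean justification is that $S/I(Z)$ is maximal Cohen--Macaulay over the one-dimensional Gorenstein ring $S/I(K)$, so $\mathrm{Ext}^1_{S/I(K)}(S/I(Z),S/I(K))=0$ and the Hom base change along $\cdot/\ell$ is an isomorphism (or one quotes the standard liaison fact). Worth noting: you could bypass this delicate step entirely, since the inclusion $\bar I(Z')\subseteq(0:_A\bar I(Z))$ is trivial (as $I(Z)\cdot I(Z')\subseteq I(Z)\cap I(Z')=I(K)$) and already yields $\Delta h_Z(t)+\Delta h_{Z'}(\rho-t)\geq\Delta h_K(t)$ for all $t$; summing over $t$ gives $|Z|+|Z'|=\deg K$ on both sides, forcing termwise equality. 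In short: your approach buys an elementary, duality-based proof of the Hilbert-function statement, while the paper's cited mapping-cone approach buys the full resolution of the linked set, which the paper needs later.
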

In words, \autoref{prop: DhZ v DhZ'} says that the sequence $\Delta h_{Z'}$ equals the sequence $\Delta h_{K} - \Delta h_Z$, in the reversed order.

The proof of this result uses a construction known as the \emph{mapping cone} in homological algebra. The key fact is that the resolution of $I(Z')$ can be obtained from that of $I(Z)$ using the fact that the resolution of $I(K)$ is the classical Koszul complex of $f_1,\ldots, f_n$ \cite{Mig:IntroLiaison}.

We now have all the ingredients to give a proof of \autoref{thm: rmax in general}.

\begin{proof}[Proof of \autoref{thm: rmax in general}]
By construction, we have $\dim_\KK (I_{\langle d \rangle})_d = \dim_\KK I(Z)_d = n+1$. Removing $z_0 \in Z$, a general set $f_1,\dots,f_n \in I(Z\setminus\{z_0\})_d$ defines a reduced complete intersection $K \subseteq \bbP^n$ of $d^n$ points by \autoref{lem:IZ regular sequences}. We add a generator $f_0$ such that $I_{\langle d \rangle} = \langle f_0, \dots, f_n \rangle_{S}$.

The Hilbert function $h_K(t)$ has the following compact form, which can be computed directly from the dimension of the syzygy modules in the Koszul complex \cite[Ch.~17]{Eis:CommutativeAlgebra}:
\[
h_K(t) \,=\, \sum_{k = 0}^n (-1)^k h_S(t-kd) \binom{n}{k}.
\]
In particular $h_K(t) = h_Z(t)$ for $t \leq d-1$, and $h_K(d) = h_Z(d)+1$. Recall from \autoref{lemma: CI ideal} that $\regH(K) = n(d-1)$. Set $\rho \coloneqq n(d-1)$ and let $Z' \coloneqq K \setminus Z$ be the complement of $Z$ in $K$. By \autoref{lemma: CI ideal} and \autoref{prop: DhZ v DhZ'}, we have 
\[
\Delta h_Z(t) + \Delta h_{Z'}(\rho - t) \,=\, \Delta h_K(t) \,=\, \Delta h_K(\rho - t).
\] 
Since $\Delta h_Z(t) = 0$ for $t \geq d+1$, we deduce $\Delta h_{Z'}(t) = \Delta h_{K}(t)$ for $t \leq \rho-(d+1)$.  Since $I(Z') \subseteq I(K)$, this implies $I(Z')_t = I(K)_t$ for $t \leq \rho - (d+1)$. In particular, we obtain that 
\[
\dim_\KK I(Z')_{t}  \,=\, h_S(t) - h_K(t) \,=\, \sum_{k \geq 1} (-1)^{k+1} h_S(t-kd) \binom{n}{k},
\]
for $t \leq (n-1)d - (n+1)$. See \autoref{fig:pretty HF} for a schematic representation.

\begin{figure}[!ht]
    \centering
    \includegraphics[width=12cm]{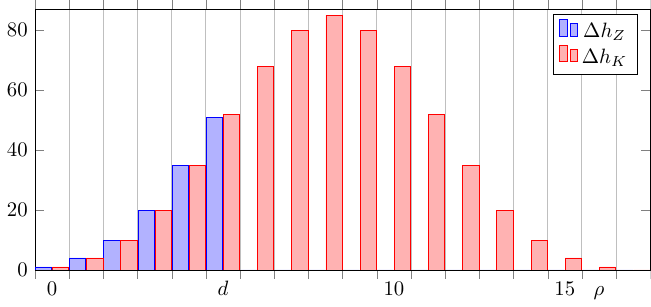}
    \caption{The sequences $\Delta h_Z$ and $\Delta h_K$ for $r=121$ points in $\PP^4$. Their difference, in reversed order, equals $\Delta h_{Z'}$. Note $\Delta h_K(5) = \Delta h_Z(5)+1$.
    }\label{fig:pretty HF}
\end{figure}

Now, fix $e \leq nd-n-(d+1)$ and let $\Syz_{e} \subseteq S_e^{n+1}$ be the subspace of syzygies of degree $e$; i.e.\ the tuples $ (s_0,\dots,s_n)$ with $s_0f_0 + \cdots + s_nf_n = 0$.

Consider the projection $\pi_0 \colon \Syz_e \to S_e$ onto the $0$-th component $S_e$, that is $\pi(s_0,\dots,s_n) = s_0$. We observe that the image of this map is contained in $I(Z')$. Indeed, since $Z' \subseteq K$, for every $p \in Z'$ we have $f_j(p) = 0$ for $j =1 \vvirg n$. This implies $s_0(p)f_0(p) = 0$ for $p \in Z'$. Since $I_{\langle d \rangle}$ defines $Z$ scheme-theoretically, we deduce that $f_0(p) \neq 0$ for every $p \in Z'$; hence $s_0(p) = 0$ for $p \in Z'$, as desired. Therefore, the image of the map $\pi_0$ is contained in $I(Z')_{e}$.

The kernel of $\pi_0$ consists of tuples $(0, s_1 \vvirg s_n) \in S_e^{n+1}$ such that $s_1f_1 + \cdots + s_nf_n = 0$; hence $(s_1 \vvirg s_n)$ defines a syzygy of $f_1 \vvirg f_n$. Since the ideal $\langle f_1 \vvirg f_n \rangle_S$ is a complete intersection, its only syzygies in degree $e$ are generated by the Koszul syzygies, and we deduce  
\[
\dim_\KK  (\Ker \pi_0) \,=\, \sum_{k \geq 2} (-1)^{k} h_S(d+e  - kd)\binom{n}{k} .
\]
Since $\dim_\KK \Syz_e = \dim_\KK (\Ker \pi_0) + \dim_\KK (\operatorname{Im} \pi_0) \leq \dim_\KK (\Ker \pi_0) + \dim_\KK I(Z')_{e}$, the standard identity $\binom{n}{k} + \binom{n}{k+1} = \binom{n+1}{k+1}$ yields
\begin{align*}
\dim_\KK \Syz_e &\,\leq\, \sum_{k \geq 1} (-1)^{k+1} h_S(e - kd) \binom{n}{k+1}  + 
   \sum_{k \geq 1} (-1)^{k+1} h_S(e-kd) \binom{n}{k}  \\
&\,=\,\sum_{k \geq 1} (-1)^{k+1} h_S(e-kd) \binom{n+1}{k+1}.
\end{align*}
We conclude that for $e \leq (n-1) d - (n+1)$,
\[
h_{I_{\langle d \rangle}}(d+e) \,=\,  (n+1) \cdot h_S(e) - \dim_\KK \Syz_{e} \,\geq\,  \sum_{k \geq 1} (-1)^{k+1} h_S(d+e-kd) \binom{n+1}{k}.
\]
The reverse inequality for $h_{I_{\langle d \rangle}}(d+e)$ follows from Fröberg's \autoref{thm:fröberg} which is a pointwise lower bound since Fröberg's conjecture is true in this case \cite[Section 3.1]{Fro:IneqHilbertSer}.

It remains to show that the inequality $h_{S/I_{\langle d \rangle}}(d+e) \geq h_S(d+e)-(n+1)$ is an equality for $e_0 \coloneqq (n-1)d - (n+1)$ and strict for $e_0-1$ (\autoref{thm: ESC lower bound}), thus establishing the length of the saturation gap $e_0$.
We have shown that $h_{S/I_{\langle d \rangle}}(t) = \text{frö}_{d,n+1}(t)$ for $t\leq nd-(n+1)$, where the latter is the Hilbert function of any ideal generated by a regular sequence of $n+1$ elements in degree $d$. Hence it suffices to check the (in)equalities for any such complete intersection, for example the monomial ideal $M \coloneqq \langle x_0^d, \dots, x_n^d\rangle_S$.
The Hilbert function of $S/M$ has the combinatorial description
\[
h_{S/M}(t) \,=\, \# \Set{\bm{x}^\alpha \in S_t | \bm{x}^\alpha \text{ divides } x_0^{d-1}\dotsm x_n^{d-1} }.
\]
The involution $\bm{x}^\alpha \mapsto x_0^{d-1}\dotsm x_n^{d-1}/\bm{x}^\alpha$ shows the symmetry $h_{S/M}(t) = h_{S/M}((n+1)(d-1) - t)$. Thus
\begin{align*}
h_{S/M}(d+e_0) &\overset{\phantom{\text{sym}}}{=} h_{S/M}(n(d-1) - 1) \overset{\text{sym}}{=} h_{S/M}(d) \,=\, h_S(d)-(n+1) \\
h_{S/M}(d+e_0-1) &\overset{\text{sym}}{=} h_{S/M}(d+1) \,=\, h_S(d+1) - (n+1)n
\end{align*}
using $d\geq 2$ in the last equality. To see that the last term is strictly larger than $h_S(d)-(n+1)$, we calculate
\begin{gather*}
\big(h_S(d+1) - (n+1)n\big) - \big(h_S(d) - (n+1)\big) \,=\, \binom{n+d}{n-1} - (n+1)(n-1) \\
\overset{d\geq 2}{\geq}\, \frac{(n+2)(n+1)n}{6} - (n+1)(n-1) \,=\, \frac{1}{6}(n+1)(n^2-4n+6) \,>\, 0. \qedhere
\end{gather*}
\end{proof}

This leads to the following generalization of \autoref{cor:boundgap2}.
\begin{corollary}\label{cor:boundgapgeneral}
   For $r\leq h_S(d)-(n+1)$ general points in the plane, the saturation gap $\gamma_n(d,r)$ is at most $(n-1)d-(n+1)$. The sum in \autoref{conj:chopped_hilbert_function_intro} reduces to $n-1$ terms.
\end{corollary}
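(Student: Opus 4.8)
The plan is to bootstrap from the extremal case \autoref{thm: rmax in general} by enlarging $Z$ to a configuration of the maximal admissible size. Put $r_{\max} = h_S(d)-(n+1)$ and $e_0 = (n-1)d-(n+1)$; we may assume $e_0\geq 1$, which is the only case of interest. Since the assertion concerns general $Z$, we may restrict to a dense open subset: by Chevalley's theorem the image of a dense open subset of $(\PP^n)^{r_{\max}}$ under the projection onto the first $r$ factors contains a dense open subset of $(\PP^n)^r$, so a general configuration $Z=(z_1,\dots,z_r)$ extends to a general configuration $Z_{\max}=Z\mathbin{\dot{\cup}}W$ with $|W|=r_{\max}-r$ on which \autoref{thm: rmax in general} and the genericity \eqref{eq:genericHF} both hold (any subset of a set of points imposing independent conditions in a given degree still imposes independent conditions in that degree, so $Z$ inherits the relevant genericity from $Z_{\max}$). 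By the definition of $\gamma_n(d,r)$ together with \autoref{thm: ESC lower bound}, it suffices to prove
\[
h_{S/I_{\langle d\rangle}}(d+e_0)=r, \qquad\text{equivalently}\qquad I(Z)_d\cdot S_{e_0}=I(Z)_{d+e_0}.
\]

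One inclusion is automatic, and $\dim I(Z)_{d+e_0}=h_S(d+e_0)-r$ by \eqref{eq:genericHF}, so it is enough to bound $\dim\big(I(Z)_d\cdot S_{e_0}\big)$ from below by that number. Set $V=I(Z)_d\cdot S_{e_0}$ and, for $m\geq 0$, let $\mathrm{ev}_m\colon S_m\to\KK^{Z_{\max}}$ denote evaluation at the points of $Z_{\max}$ (after fixing trivializations), so that $\ker\mathrm{ev}_m=I(Z_{\max})_m$. Applying \autoref{thm: rmax in general} to $Z_{\max}$ gives $\gamma_n(d,r_{\max})=e_0$, hence $I(Z_{\max})_d\cdot S_{e_0}=I(Z_{\max})_{d+e_0}$; since $I(Z_{\max})_d\subseteq I(Z)_d$ this yields $\ker\mathrm{ev}_{d+e_0}=I(Z_{\max})_{d+e_0}\subseteq V$, and therefore
\[
\dim V=\dim I(Z_{\max})_{d+e_0}+\dim\mathrm{ev}_{d+e_0}(V)=\big(h_S(d+e_0)-r_{\max}\big)+\dim\mathrm{ev}_{d+e_0}(V).
\]
Thus the claim reduces to showing $\dim\mathrm{ev}_{d+e_0}(V)=r_{\max}-r=|W|$. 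Every element of $V$ vanishes on $Z$, so $\mathrm{ev}_{d+e_0}(V)\subseteq\KK^{W}$, and it remains to produce, for each $p\in W$, an element of $V$ whose image under $\mathrm{ev}_{d+e_0}$ is a nonzero multiple of the coordinate vector $e_p$. The analogous count in degree $d$ provides the tool: $\mathrm{ev}_d(I(Z)_d)\subseteq\KK^{W}$, while $\mathrm{ev}_d(I(Z)_d)\cong I(Z)_d/I(Z_{\max})_d$ has dimension $(h_S(d)-r)-(h_S(d)-r_{\max})=|W|$ by \eqref{eq:genericHF}, so $\mathrm{ev}_d(I(Z)_d)=\KK^{W}$. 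Hence for each $p\in W$ there is $g_p\in I(Z)_d$ with $g_p(p)=1$ and $g_p(q)=0$ for $q\in W\setminus\{p\}$; choosing a linear form $\ell$ with $\ell(p)\neq 0$, the product $g_p\cdot\ell^{e_0}$ lies in $V$ and satisfies $\mathrm{ev}_{d+e_0}(g_p\ell^{e_0})=\ell(p)^{e_0}e_p$. This shows $e_p\in\mathrm{ev}_{d+e_0}(V)$ for all $p\in W$, completing the argument. The final assertion is then immediate: with $\gamma_n(d,r)\leq e_0=(n-1)d-(n+1)$, in the sum of \autoref{conj:chopped_hilbert_function_intro} the $k$-th term carries the factor $h_S(d+e-kd)$, which vanishes for $k\geq n$ whenever $e\leq e_0<(n-1)d$, so only the $n-1$ terms $k=1,\dots,n-1$ survive.

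The step I expect to demand the most care is the dimension bookkeeping around the two evaluation maps — especially the identification $\ker\mathrm{ev}_{d+e_0}=I(Z_{\max})_{d+e_0}\subseteq V$, which is exactly the point where \autoref{thm: rmax in general} is invoked — together with the routine but necessary verification that a general $Z$ genuinely extends to a $Z_{\max}$ realizing all the required genericity simultaneously; everything else is linear algebra over $\KK$ fed by \eqref{eq:genericHF}.
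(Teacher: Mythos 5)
Your proof is correct, and it rests on the same two pillars as the paper's own argument: \autoref{thm: rmax in general} for the extremal configuration, and the observation that a degree-$d$ form vanishing on the smaller set but not at the extra points, multiplied by a suitable form of degree $e_0$, supplies the missing part of $S_{e_0}\cdot I(Z)_d$. The difference is purely organizational: the paper argues by reverse induction on $r$, adding one point at a time and showing $\gamma_n(d,r-1)\le\gamma_n(d,r)$ by exhibiting a single product $fh$ with $f\in I(Z_{r-1})_d\setminus I(Z_r)_d$ and $h\in S_{e_0}$ not vanishing at the new point, whereas you pass directly to a general extension $Z_{\max}\supseteq Z$ of maximal size and compute $\dim\bigl(S_{e_0}\cdot I(Z)_d\bigr)$ in one stroke via rank--nullity for the evaluation map at $Z_{\max}$, with the interpolating products $g_p\ell^{e_0}$ spanning $\KK^{W}$. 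Your route requires the extra bookkeeping (which you handle correctly) that a general $Z$ extends to a general $Z_{\max}$ and that imposing independent conditions in degrees $d$ and $d+e_0$ passes to subsets, while the paper's induction avoids this at the cost of iterating; the content is otherwise equivalent, and your justification of the final claim (vanishing of $h_S(d+e-kd)$ for $k\ge n$ since $e\le (n-1)d-(n+1)$) is exactly the intended one. The only caveat, shared with the paper, is the implicit assumption $e_0\ge 1$, which you state explicitly and which holds in the nontrivial range.
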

\begin{proof}
The proof is by reverse induction on $r$. The base case is $r = h_S(d)-(n+1)$, for which the statement follows from \autoref{thm: rmax in general}. We are going to show that $\gamma_n(d,r-1) \leq \gamma_n(d,r)$.
Let $Z_{r-1} = (z_1, \ldots, z_{r-1}) \in (\mathbb{P}^n)^{r-1}$ be general, let $z_r$ be one additional general point and set $Z_r = (z_1, \ldots, z_r)$. Let $e_0 \coloneqq \gamma_n(d,r)$, we have $h_{S/I(Z_r)_{\langle d \rangle}}(d+e_0) = r$ and need to show $h_{S/I(Z_{r-1})_{\langle d \rangle}}(d+e_0) = r-1$. For this is suffices to show
\[
(I(Z_{r-1})_{\langle d \rangle})_{d+e_0} \,=\, S_{e_0} \cdot I(Z_{r-1})_d \,\supsetneq\,  S_{e_0} \cdot I(Z_{r})_d \,=\, (I(Z_{r})_{\langle d \rangle})_{d+e_0}.
\]
By genericity of $Z_r$, we can pick $f \in I(Z_{r-1})_d \setminus I(Z_r)_d$ and $h \in S_{e_0}$ not vanishing on $z_r$, then $fh \in S_{e_0} \cdot I(Z_{r-1})_d$, but $fh \notin I(Z_r) \supseteq I(Z_{r})_{\langle d \rangle}$.
\end{proof}

\section{Proofs via computer algebra} \label{sec:5}

This section provides a computational proof of \autoref{conj:chopped_hilbert_function_intro} for many small values of $d,n,r$. 

\begin{theorem} \label{thm:computeralgebra}
\autoref{conj:chopped_hilbert_function_intro} holds for a set of $r$ general points in $\bbP^n$, with the following values of $n$ and $r$:
 \[
\begin{array}{c|ccccccccc}
n  & 2 & 3  & 4 & 5 & 6 & 7 & 8 & 9 & 10 \\ \hline
r  &  \leq 2343 & \leq 2296 & \leq 1815 & \leq 1272 & \leq 908 & \leq 767 & \leq 479 & \leq 207 & \leq 267  
\end{array}
 \]
\end{theorem}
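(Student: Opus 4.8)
The plan is to reduce each instance of \autoref{conj:chopped_hilbert_function_intro} to a single exact linear-algebra computation and then to carry it out. Fix $n$ and $r$, put $d=\min\{t : h_S(t)\ge r\}$, and let $e_0$ be the saturation gap predicted by \autoref{conj:chopped_hilbert_function_intro}; by \autoref{cor:boundgapgeneral} we may assume $e_0\le (n-1)d-(n+1)$, so only the finitely many degrees $d+1,\dots,d+e_0$ are at stake. The structural input is \autoref{thm: ESC lower bound}: for \emph{every} $Z\in U_{\genHF}$ it provides a lexicographic lower bound for $h_{S/I_{\langle d\rangle}}$ which is exactly the function predicted by \autoref{conj:chopped_hilbert_function_intro}, together with the fact that once this Hilbert function reaches $r$ it stays equal to $r$. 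Since moreover $I_{\langle d\rangle}\subseteq I(Z)$ forces $h_{S/I_{\langle d\rangle}}\ge r$ in all degrees $\ge d$, it follows that to prove \autoref{conj:chopped_hilbert_function_intro} for general $Z$ it suffices to exhibit \emph{one} configuration $Z_0\in U_{\genHF}$ for which $h_{S/I(Z_0)_{\langle d\rangle}}(d+e)$ equals the predicted value for each $1\le e\le e_0$. Indeed, the rank of the multiplication map $\mu_e\colon S_e\otimes_{\KK}I(Z)_d\to I(Z)_{d+e}$ is lower semicontinuous in $Z$ (\autoref{prop: semicontinuity multiplication map}), so a general $Z$ then satisfies $h_{S/I_{\langle d\rangle}}(d+e)\le h_{S/I(Z_0)_{\langle d\rangle}}(d+e)$ for $e\le e_0$; combining this with the lower bound and the ``sticky at $r$'' property from \autoref{thm: ESC lower bound} pins $h_{S/I_{\langle d\rangle}}$ to the predicted function in \emph{every} degree.

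So the remaining work is to produce such a $Z_0$ for each relevant pair $(n,r)$, namely those not already handled by \autoref{thm: rmax in general}, \autoref{thm:main3.3} and \autoref{lem:interesting_range}. I would work over a prime field $\FF_p$ with $p$ moderately large: pick a random $Z_0\in(\PP^n)^r$ with coordinates in $\FF_p$, build the evaluation matrix $S_d\to \FF_p^{\,r}$ and check it has rank $r$ (so that $Z_0$ lies in the $\FF_p$-analogue of $U_{\genHF}$ and $\dim I(Z_0)_d=h_S(d)-r$), compute a basis of $I(Z_0)_d$ as the kernel of this map, and for $e=1,\dots,e_0$ assemble the Macaulay matrix representing $\mu_e$ and compute its rank, comparing with the value predicted by \autoref{conj:chopped_hilbert_function_intro}. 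Each step is exact arithmetic over $\FF_p$, so the verification is fast and fully rigorous.

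To legitimize the passage back to characteristic $0$, observe that the set of configurations $Z$ for which all of $\rk \mu_1,\dots,\rk \mu_{e_0}$ attain the predicted values is Zariski open in $(\PP^n)^r$ and is cut out, after clearing denominators, by the non-vanishing of certain minors with coefficients in $\ZZ$ (a basis of $I(Z)_d$ varies algebraically on the locus where the evaluation map has full rank). Exhibiting an $\FF_p$-point of this set shows those minors do not vanish identically modulo $p$, hence do not vanish identically over $\QQ$, so the set is dense over $\overline{\QQ}$; this supplies the general $Z_0$ needed above. One could instead work with rational points and exact integer linear algebra at higher cost; the finite-field pass is merely an optimization, and an unlucky prime can only make the check fail, never falsely succeed. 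The main obstacle is therefore computational rather than conceptual: the matrix of $\mu_{e_0}$ has $\binom{n+d+e_0}{n}$ rows and $e_0$ can be as large as $(n-1)d-(n+1)$, so the linear algebra grows rapidly with $n$ and $d$; reaching the stated ranges requires care with the choice of $p$ and of the point coordinates, and with reusing data across the degrees $e=1,\dots,e_0$. The explicit bounds in the table record how far this was pushed.
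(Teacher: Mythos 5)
Your proposal is correct and follows essentially the same route as the paper: exhibit one explicit configuration per pair $(n,r)$, certify the predicted ranks of the maps $\mu_e$ up to the expected gap by exact linear algebra over $\FF_p$, and transfer to general $Z$ in characteristic $0$ via the openness statement of \autoref{prop: semicontinuity multiplication map} together with \autoref{thm: ESC lower bound} and \autoref{cor:boundgapgeneral}. The only (immaterial) difference is in the characteristic-$0$ transfer: the paper reduces specific integer-coordinate instances modulo $p$ and uses $\dim_{\FF_p}(I_{\FF_p})_t \le \dim_\QQ I_t$, whereas you argue directly that the good locus is cut out by integer-coefficient minors, so an $\FF_p$-point witnesses their non-vanishing over $\QQ$ — both are standard and equally valid.
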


The proof of  \autoref{thm:computeralgebra} is computational. For every such $(n,r)$ of interest, we exhibit an $r$-tuple $Z \in (\mathbb{P}^n(\mathbb{Q}))^r$ for which the statement holds. The following semicontinuity result guarantees that this suffices to conclude. 

\begin{proposition} \label{prop: semicontinuity multiplication map}
Let $r < h_S(d)-n$ and let $U_{\genHF} \subseteq (\mathbb{P}^n)^r$ be the collections of points satisfying \eqref{eq:genericHF}. The set  $U_{k,e} = \set{ Z \in U_{\mathrm{genHF}} | h_{I(Z)_{\langle d \rangle}}(d+e) \geq k }$ is Zariski open in $(\bbP^n)^r$. In particular, the set $U = \set{ Z \in (\bbP^n)^r | h_{I(Z)_{\langle d \rangle}} \text{ satisfies \eqref{eq:expectedhf}}}$ is Zariski open. 
\end{proposition}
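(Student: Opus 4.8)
The plan is to realise $h_{I(Z)_{\langle d \rangle}}(d+e)$ as the rank of a morphism of vector bundles over $U_{\genHF}$ and to invoke semicontinuity of the rank. Fix $e \geq 0$. Over $U_{\genHF}$ the subspace $I(Z)_d \subseteq S_d$ has constant dimension $h_S(d)-r$, and the assignment $Z \mapsto I(Z)_d$ is precisely the chopping morphism $\cmap \colon U_{\genHF} \to \Gr(h_S(d)-r, S_d)$, which was observed above to be a morphism of varieties. Pulling back the tautological subbundle along $\cmap$ yields a vector subbundle $\calE \subseteq S_d \otimes_\KK \calO_{U_{\genHF}}$ with fibre $I(Z)_d$ at $Z$; equivalently, over each affine open $W$ in a suitable cover of $U_{\genHF}$ there are regular maps $f_1, \dots, f_{h_S(d)-r} \colon W \to S_d$ whose values span $I(Z)_d$ for every $Z \in W$. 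Composing with the fixed $\KK$-bilinear multiplication $S_d \otimes_\KK S_e \to S_{d+e}$ gives an $\calO_W$-linear map $\mu_e \colon \calE|_W \otimes_\KK S_e \to S_{d+e} \otimes_\KK \calO_W$ of free $\calO_W$-modules whose fibre at $Z$ is the multiplication map $I(Z)_d \otimes_\KK S_e \to S_{d+e}$, with image $(I(Z)_{\langle d \rangle})_{d+e}$. Hence $\rank_Z \mu_e = h_{I(Z)_{\langle d \rangle}}(d+e)$.

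Openness of $U_{k,e}$ follows at once: trivialising $\calE|_W$, the map $\mu_e$ is a matrix of regular functions on $W$, and $\rank_Z \mu_e \geq k$ is the condition that at least one of its $k \times k$ minors be nonzero at $Z$, which is Zariski open; these opens glue to a Zariski open subset of $U_{\genHF}$, and since $U_{\genHF}$ is Zariski open in $(\PP^n)^r$, so is $U_{k,e}$.

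For the final statement, write $\phi(e)$ for the value prescribed by \eqref{eq:expectedhf}. First, \eqref{eq:expectedhf} forces $Z \in U_{\genHF}$: its $e=0$ instance reads $\dim_\KK I(Z)_d = h_S(d)-r$, so $Z$ imposes the maximal number of conditions in the critical degree $d = \min\{t : h_S(t) \geq r\}$, and together with the prescribed stabilisation $h_{I_{\langle d\rangle}}(d+e) = h_S(d+e)-r$ for large $e$ this pins down the generic Hilbert function \eqref{eq:genericHF}. Hence it suffices to show $U$ is open in $U_{\genHF}$, where $U = \{Z \in U_{\genHF} : h_{I(Z)_{\langle d\rangle}}(d+e) = \phi(e) \text{ for all } e \geq 1\}$ (the values for $e \leq 0$ being automatic). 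The crucial observation is that the first degree $d+e$ at which $h_{I(Z)_{\langle d\rangle}}$ can deviate from $\phi$ is always one at which it falls short: setting $h = h_{S/I(Z)_{\langle d\rangle}}$ and letting $g$ be the truncated Fröberg function of \autoref{thm: ESC lower bound}, so that $\phi(e) = h_S(d+e) - g(d+e)$ and $h \geqlex g$ while $h$ and $g$ agree in all degrees $\leq d$, a first point of disagreement $d+e_1$ (with $e_1 \geq 1$) satisfying $h(d+e_1) < g(d+e_1)$ would violate $h \geqlex g$; so $h(d+e_1) > g(d+e_1)$, i.e. $h_{I(Z)_{\langle d\rangle}}(d+e_1) < \phi(e_1)$. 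Consequently $h_{I(Z)_{\langle d\rangle}}(d+e) \geq \phi(e)$ for all $e \geq 1$ already implies equality throughout, so $U = \bigcap_{e \geq 1}\big(U_{\phi(e),e} \cap U_{\genHF}\big)$. This intersection is finite: for $e \geq e_0$ one has $\phi(e) = h_S(d+e)-r$, which is the largest value $h_{I_{\langle d\rangle}}(d+e)$ can take (since $I_{\langle d\rangle} \subseteq I(Z)$ and $h_{S/I(Z)}(d+e)=r$ in this range), so $U_{\phi(e),e}$ is the locus $(I_{\langle d\rangle})_{d+e} = I(Z)_{d+e}$; as the generators of $I(Z)$ lie in degrees $\leq d+1 \leq d+e_0$, this locus for $e=e_0$ is contained in the one for every $e \geq e_0$. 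Thus $U = \bigcap_{e=1}^{e_0}\big(U_{\phi(e),e} \cap U_{\genHF}\big)$ is a finite intersection of Zariski opens, hence Zariski open.

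The step I expect to be the main obstacle is this last reduction to $\geq$-conditions, i.e. showing the first deviation from $\phi$ is always a shortfall. The inequality supplied by \autoref{thm: ESC lower bound} is merely lexicographic (a pointwise version would require Fröberg's conjecture), and the argument promotes it to a pointwise bound at the first deviation degree only because $h_{S/I_{\langle d\rangle}}$ and $g$ are forced to coincide in all degrees $\leq d$. The vector-bundle description of $\mu_e$ and the verification that \eqref{eq:expectedhf} entails $Z \in U_{\genHF}$ are routine by comparison.
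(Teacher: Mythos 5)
Your proof takes essentially the same route as the paper: openness of $U_{k,e}$ via pulling back the tautological bundle along the chopping map and semicontinuity of the rank of the multiplication map (the paper phrases this as the degeneracy locus of a section of $\Hom(\calS\otimes S_e, S_{d+e})$, you as minors of a matrix of regular functions — the same argument), followed by writing $U$ as a finite intersection of the $U_{k,e}$. Your careful justification that the lower bounds $h_{I_{\langle d\rangle}}(d+e)\geq\phi(e)$ force equality — the ``first deviation is a shortfall'' argument from the lex inequality of \autoref{thm: ESC lower bound}, plus the generated-in-degrees-$\leq d+1$ argument that cuts the intersection down to finitely many $e$ — is exactly the step the paper asserts without proof, and you fill it in correctly. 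The one weak point is your claim that \eqref{eq:expectedhf} forces $Z\in U_{\genHF}$: the $e=0$ instance only forces $r$ distinct points imposing independent conditions in degree $d$ (hence in all degrees $\geq d$), and neither it nor the stabilisation says anything about degrees $<d$; in fact the implication fails in general (e.g.\ for suitable $n,d,r$ a tuple of general points lying on a hyperplane satisfies \eqref{eq:expectedhf} for every $e$ while violating \eqref{eq:genericHF} in low degrees). So what you (and, implicitly, the paper, which intersects sets contained in $U_{\genHF}$) actually prove is that $U\cap U_{\genHF}=\bigcap_{e=1}^{e_0}U_{\phi(e),e}$ is Zariski open; this is the statement used later, since exhibiting one instance in it shows the conjecture holds for general $Z$, but the openness of the literal set $U$ is not established by your argument.
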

\begin{proof}
   Let $\Gr( h_S(d) - r, S_d)$ be the Grassmannian of planes of dimension $ h_S(d) - r$ in $S_d$. Consider the vector bundle $\calE \coloneqq \Hom(\calS \otimes S_e,  S_{d+e})$, where $\calS$ denotes the tautological bundle over $\Gr( h_S(d) - r, S_d)$: the fiber of $\calE$ at a plane $[L]$ is $\calE_L = \Hom( L \otimes S_e, S_{d+e})$. The multiplication map $\mu: S_d \otimes S_e \to S_{d+e}$ defines a global section of $\calE$ via restriction. The pull-back $\frakc^* \calE$ of $\calE$ via the chopping map (\autoref{def:chopping_map}) defines a bundle over $U_{\text{genHF}}$ and $\frakc^* \mu$ defines a global section of $\frakc^*\calE$. The set $U_{k,e}$ is the complement of the degeneracy locus 
   \[
   V_{k,e} \,\coloneqq\, \Set{ Z \in U_{\textup{genHF}} | \rank\bigl(\mu_e\colon I(Z)_d \otimes S_e \to S_{d+e} \bigr) < k }.
   \]
   This shows that $V_{k,e}$ is Zariski closed, hence $U_{k,e}$ is Zariski open.
   
   The set $U$ is the intersection of  $U_{\text{genHF}}$ with finitely many open sets $U_{k_e,e}$ for $e = 1 \vvirg m_r$. Here $m_r$ is any upper bound on the saturation gap, for instance $m_r = (n-1)d-(n+1)$ from \autoref{cor:boundgapgeneral}; $k_e$ is the expected value of $h_{I_{\langle d \rangle}}(d+e)$ in \eqref{eq:expectedhf}.  This concludes the proof.
\end{proof}
\autoref{thm:computeralgebra} is a direct consequence of \autoref{prop: semicontinuity multiplication map}.
\begin{proof}[Proof of \autoref{thm:computeralgebra}]
Identify the field of rational numbers $\bbQ$ with the prime field of $\KK$. For every $(n,r)$ of interest, we exhibit an instance $Z \in U_{\genHF} \subseteq (\mathbb{P}^n(\mathbb{Q}))^r$ for which the Hilbert function of $I(Z)_{\langle d \rangle}$ satisfies \eqref{eq:expectedhf}. These instances can be found online at 
\begin{center}
    \url{https://mathrepo.mis.mpg.de/ChoppedIdeals/}.
\end{center}
This guarantees that the corresponding open set $U$ of \autoref{prop: semicontinuity multiplication map} is non-empty, and therefore it is Zariski dense. This shows that for a general instance $Z \in (\bbP^n)^r$, the Hilbert function of $Z$ satisfies \eqref{eq:expectedhf}, and therefore \autoref{conj:chopped_hilbert_function_intro} holds. 
\end{proof}

Notice that it suffices to check \eqref{eq:expectedhf} for $e$ up to the expected saturation gap of \autoref{conj:main}. Indeed, if it holds up to that value, we have $(I_{\langle d \rangle})_{d+e} = I(Z)_{d+e}$ for higher $e$, which is enough to conclude.

To speed up the computations, we make the following observation. Let $I \subseteq S = \KK[x_0, \ldots, x_n]$ be an ideal generated by polynomials $f_1, \ldots, f_s$ with coefficients in $\ZZ \subseteq \KK$. Then 
\[
\dim_\KK I_t \,=\, \dim_\QQ (I \cap \QQ[x_0, \ldots, x_n])_t \,\geq\, \dim_{\mathbb{F}_p} (I_{\mathbb{F}_p})_t.
\]
Here $I_{\mathbb{F}_p} \subseteq \mathbb{F}_p[x_0, \ldots, x_n]$ is the reduction modulo $p$ of the ideal $I$. Checking that the upper bound \eqref{eq:expectedhf} is attained is much easier over a finite field, and it guarantees the bound is attained over $\mathbb{Q}$, hence over $\KK$. This leads to the following strategy for proving \autoref{thm:computeralgebra}. First implement the expected Hilbert function and the expected gap size according to \autoref{conj:chopped_hilbert_function_intro} and \autoref{conj:main}, here called $\texttt{expectedHF}(n,r,t)$ and $\texttt{expectedGapSize}(n,r)$. Next, execute the following steps for given $(n,r)$:

\begin{enumerate}
\item Determine $d\coloneqq \min \set{t | h_S(t) \geq r}$.
\item Sample $r$ points $Z\subseteq \PP^n(\FF_p)$ (represented by homogeneous integer coordinates).
\item Calculate the ideal $I \coloneqq I(Z)$ and set $J \coloneqq \langle I_d \rangle_{\FF_p[x_0,\dots,x_n]}$.
\item Calculate the Hilbert function of $S/J$ up to $d+\texttt{expectedGapSize}(n,r)$.
\item Check if the values match with $\texttt{expectedHF}(n,r,t)$.
\end{enumerate}
\autoref{prop: semicontinuity multiplication map} and the preceding remark about reduction modulo $p$ ensure that this procedure \emph{proves} the validity of \autoref{conj:chopped_hilbert_function_intro} in the cases of \autoref{thm:computeralgebra}. The following code in \texttt{Macaulay2}, assuming an implementation of \texttt{expectedGapSize} and \texttt{expectedHF}, demonstrates the procedure.

\begin{minted}{macaulay2.py:Macaulay2Lexer -x}
loadPackage "Points"
n = 2; r = 41;
d = 8; -- determined by (n,r)
I = points randomPointsMat(ZZ/1009[x_0..x_n], r);
e = expectedGapSize(n,r) -- |$\mathit 3$|
J = ideal select(first entries gens I, f -> degree f == {d});
hs = hilbertSeries(J, Order=>d+e+1) -- |$\mathit 1+3T+6T^2+10T^3+15T^4+21T^5+28T^6+36T^7+41T^8+43T^9+42T^{10}+41T^{11}$|
T = (ring hs)_0;
for t to d+e do
    assert (coefficient(T^t, hs) == expectedHF(n,r,t))
\end{minted}

We briefly discuss some additional speed-ups. In the cases outside of the range \eqref{eq:range_r}, where \autoref{conj:chopped_hilbert_function_intro} is equivalent to the IGC, it is \emph{much} faster to calculate $\texttt{mingens}(I(Z))$ and compare with the expected first graded Betti numbers $\beta_{1,d}$, $\beta_{1,d+1}$. Also, much computation time is spent computing the ideal of points. For large $r$, a significant speedup is obtained when using the methods implemented in \texttt{Points.m2} \cite{PointsSource}.

We conclude with a variation on the computer experiment. Instead of computing ideals of (random) points one can also try to find monomial ideals certifying \autoref{conj:chopped_hilbert_function_intro}. This method, for instance, can be used to prove the MRC in $\bbP^2$ \cite{Sauer1985:PointsP2,GerGreRob:MonomialIdeals}. An exhaustive search is possible in $\PP^2$ for small values of $r$ and leads to the following result:

\begin{theorem}\label{thm:nomonomials}
Let $n=2$, $S=\KK[x,y,z]$.
\begin{enumerate}[label=\textup{(\roman*)}]
\item For $r=18$ there is a unique, up to permutation of the variables, monomial ideal $I=\langle x^3y^2, y^3z^2, z^3x^2, x^2y^2z^2\rangle_S$ with Hilbert function \eqref{eq:genericHF}, which satisfies \autoref{conj:chopped_hilbert_function_intro}.
\item For $r\in\{25,32,33\}$ there are \emph{no} monomial ideals satisfying \autoref{conj:chopped_hilbert_function_intro}.
\end{enumerate}
\end{theorem}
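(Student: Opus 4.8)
The plan is to reduce each case to a finite enumeration of monomial ideals with a prescribed Hilbert function, carried out by computer. First I would fix the combinatorial setup: a monomial ideal $I \subseteq S = \KK[x,y,z]$ with $h_{S/I} = h_{S/I(Z)}$ as in \eqref{eq:genericHF} is determined by its staircase, and by \eqref{eq:genericHF} together with Macaulay's characterization of Hilbert functions the relevant $I$ must be generated in degrees $d$ and $d+1$, where $d = \min\{t \mid h_S(t) \geq r\}$. So the monomials of degree $d$ in $I$ form a subset $A \subseteq S_d$ of size $h_S(d) - r$ such that $\dim_\KK (S_1 \cdot \langle A\rangle_\KK)_{d+1}$ is correct, and the degree $d+1$ generators are a further prescribed number of monomials completing $I_{d+1}$; moreover $I$ must be \emph{saturated with the right Hilbert polynomial}, i.e.\ it must actually be the ideal of a (monomial, non-reduced) zero-dimensional scheme of degree $r$. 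This last condition, that $h_{S/I}(t) = r$ for all $t \gg 0$ and $I = I^{\mathrm{sat}}$, together with the Hilbert-Burch structure in $\PP^2$ (\autoref{prop: MRC in P2}), severely constrains the staircase: it must be (up to the symmetry group $\mathfrak{S}_3$ permuting $x,y,z$) one of finitely many ``lex-like'' or ``hook-like'' shapes.

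Next I would carry out the enumeration. For each $r \in \{18,25,32,33\}$, list all monomial ideals $I$ (up to the $\mathfrak{S}_3$-action) that are saturated, have constant Hilbert polynomial $r$, and whose Hilbert function equals $\min\{h_S(t),r\}$ — equivalently, whose first difference $\Delta h_{S/I}$ starts $1,2,3,\dots$ and then drops. This is a bounded search: the staircase lives inside the triangle of monomials of degree $\leq \regH(I) \leq d+1$ by \autoref{prop: MRC in P2}, and one simply iterates over all $(h_S(d)-r)$-subsets $A$ of $S_d$, closes up under multiplication, checks the degree $d+1$ part, and tests saturation and the Hilbert polynomial. For each surviving $I$, compute the chopped ideal $I_{\langle d\rangle} = \langle I_d \rangle_S = \langle A \rangle_S$ and its Hilbert function (e.g.\ via the Koszul-type formula for monomial ideals, or directly), and compare against the value predicted by \eqref{eq:expectedhf}. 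For $r = 18$ one checks that the single ideal $\langle x^3y^2, y^3z^2, z^3x^2, x^2y^2z^2\rangle$ — the unique orbit representative surviving the first filter — does satisfy \eqref{eq:expectedhf} (a one-line computation of $h_{S/I_{\langle 5\rangle}}$), giving part~(i); for $r \in \{25,32,33\}$ one checks that \emph{every} ideal surviving the first filter fails \eqref{eq:expectedhf}, giving part~(ii). The actual computation can be run in \texttt{Macaulay2} and the code deposited at the repository URL, exactly as in the proof of \autoref{thm:computeralgebra}.

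The main obstacle is not conceptual but a matter of correctly bounding the search so that the enumeration is provably exhaustive: one must argue that no monomial ideal satisfying the Hilbert-function constraint can have a generator in degree $> d+1$ or a staircase escaping the expected region. Here the key input is \autoref{prop: MRC in P2} (Hilbert-Burch in $\PP^2$), which forces $\beta_{i,j} = 0$ outside $j \in \{d, d+1, d+2\}$ and hence $\regCM(S/I) \leq d+1$ and $\regH(I) \le d+1$; combined with the explicit values of $\beta_{1,d}, \beta_{1,d+1}$ from that proposition, this pins down exactly how many monomials of each degree generate $I$, making the subset-enumeration finite and small. A secondary (purely bookkeeping) point is to handle the $\mathfrak{S}_3$-symmetry so that ``unique up to permutation of the variables'' in part~(i) is literally verified, rather than merely ``unique up to the search's canonicalization.'' Once these finiteness bounds are in place, the rest is a routine, machine-checkable computation.
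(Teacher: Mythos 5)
Your overall plan---certify both parts by a finite, machine-checked enumeration of monomial ideals with Hilbert function \eqref{eq:genericHF} and compare each chopped ideal against \eqref{eq:expectedhf}---is the same as the paper's, which simply reports an exhaustive search. But the step you yourself identify as the crux, namely proving that the enumeration is exhaustive, fails as written. \autoref{prop: MRC in P2} is a statement about \emph{general} points; it imposes no constraint on an arbitrary monomial ideal whose quotient happens to have Hilbert function \eqref{eq:genericHF}. Under degeneration graded Betti numbers only increase, and for instance the lexicographic ideal with Hilbert function \eqref{eq:genericHF} has minimal generators in degrees far beyond $d+1$ (its generation degree is governed by the Gotzmann number, on the order of $r$, not of $d$). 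So restricting the search to ideals generated in degrees $d,d+1$ with the Betti numbers of \autoref{prop: MRC in P2}, and additionally to saturated ideals (a hypothesis neither implied by \eqref{eq:genericHF} nor present in the statement), enumerates a strictly smaller class than the theorem quantifies over; with that restriction neither the uniqueness in (i) nor the non-existence in (ii) is actually established. Relatedly, your aside that the displayed ideal for $r=18$ is the unique orbit surviving the ``first filter'' is not correct: there are several monomial ideals with Hilbert function \eqref{eq:genericHF} (the lex ideal among them); uniqueness only holds after also imposing \eqref{eq:expectedhf} on the chopped ideal.

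The gap is repairable without any appeal to the MRC. Since whether $I$ satisfies \autoref{conj:chopped_hilbert_function_intro} depends only on $I_d$, enumerate all sets $A$ of $h_S(d)-r$ monomials of degree $d$ (finitely many, and few up to the $\frakS_3$-action for the relevant $(n,r)$), test whether $\langle A\rangle_S$ has Hilbert function \eqref{eq:expectedhf} in degrees up to $d+e_0$, and then test whether $A$ extends to a monomial ideal with Hilbert function \eqref{eq:genericHF}. For the extension step observe that any ideal satisfying both conditions has $I_t=\langle A\rangle_t$ for every $t\geq d+e_0$, because both spaces have dimension $h_S(t)-r$ and one contains the other; hence only the graded pieces in degrees $d+1,\dots,d+e_0-1$ need to be chosen (each a monomial space of dimension $h_S(t)-r$ containing $S_1$ times the previous piece, with the top one multiplying into $\langle A\rangle_{d+e_0}$), which makes the search finite and provably exhaustive. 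Note that $e_0>1$ for $r\in\{25,32,33\}$, so even the correct candidates are not confined to generators in degrees $d,d+1$. Alternatively, one can bound the generator degrees of \emph{all} monomial ideals with Hilbert function \eqref{eq:genericHF} by the regularity of the lex ideal with that Hilbert function; either way, with the corrected bookkeeping your computation does prove both parts, in the same spirit as the paper's (unwritten, computational) proof.
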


\section{Symmetric tensor decomposition} \label{sec:6}

This final section discusses the role of chopped ideals in tensor decomposition algorithms. This was our original motivation and this project was initiated by a question encountered by one of the authors and Nick Vannieuwenhoven in \cite{TelVan:NormalFormAlgorithm}. The setting in that paper is slightly different because it studies algorithms for (ordinary) tensor decomposition. The same approach is classical in \emph{symmetric tensor decomposition} or \emph{Waring decomposition} \cite{IarrKan:PowerSumsBook,brachat2010symmetric}. A \emph{Waring decomposition} of a homogeneous polynomial $F \in T = \KK[y_0 \vvirg y_n]$ of degree $D$ is an expression of $F$ as a sum 
\begin{equation} \label{eq:waring}
    F \, = \, c_1 (z_1 \cdot y)^D + \cdots + c_r (z_r \cdot y)^D
\end{equation} 
of powers of linear forms. Here $c_i \in \KK$ are constants and $z_i \cdot y = z_{i0}y_0 + \cdots + z_{in}y_n$. The \emph{Waring rank} of $F$ is the minimal number of summands $r$ in such an expression. 

Most decomposition algorithms aim to determine the vectors $z_i$ up to scaling, and then solve a linear system to find the coefficients $c_i$. Therefore, it is natural to regard $Z = (z_1, \ldots, z_r)$ as a point in $(\PP^n)^r$. A classical approach to compute $Z$ uses \emph{apolarity theory} and dates back to \cite{Sylv:PrinciplesCalculusForms}. We briefly recall the basics. 

The ring $S = \KK[x_0\vvirg x_n]$ acts on the ring $T$ by differentiation: if $g \in S$ and $F \in T$, then $g \bullet F = g(\partial_0 \vvirg \partial_n) F$ where $\partial_j = \frac{\partial}{\partial y_j}$. This action is graded. In particular, every $F \in T_D$ gives rise to a sequence of linear maps
\[
C_F(d,D-d)\colon S_d \to T_{D-d}, \qquad g \mapsto g \bullet F,
\]
called the \emph{Catalecticant maps} of $F$. Notice that $\Ker C_F(d,D-d) \subseteq S_d$ is a linear space of polynomials of degree $d$, and that $C_F(d,D-d) = 0$ if $d > D$. The kernels $\Ker C_F(d,D-d)$ are the homogeneous components of an ideal, called the \emph{apolar ideal} of $F$, given by 
\[
 \Ann(F) \,=\, \Set{ f \in S | f \bullet F = 0 }.
\]
On the other hand, $S$ can be naturally regarded as the homogeneous coordinate ring of $\bbP^n$. The classical \emph{apolarity lemma} \cite[Sec.~1, Lem.~5]{bernardi2018hitchhiker} states that $F$ decomposes as in \eqref{eq:waring} if and only if the vanishing ideal $I(Z)$ of $Z = (z_1 \vvirg z_r)$ is contained in $\Ann(F)$.

It is usually hard to compute the ideal $I(Z)$ of a minimal Waring decomposition of $F$. However, in a restricted range, it turns out that its chopped ideal is generated by the graded component of $\Ann(F)$ in degree $d = \regH(Z)$. In other words, the chopped ideal of $Z$ can be computed via elementary linear algebra as kernel of the corresponding catalecticant map. This is known as the catalecticant method to determine a decomposition of $F$ and it is the starting point of a number of more advanced Waring decomposition algorithms \cite{BerGimIda:ComputingSymmetricRankSymmetricTensors,brachat2010symmetric,BerTau:WaringTangCactusDecomp,LafMasRis:DecompAlgs}. We record a consequence of \cite[Thm.~2.6, Lem.~1.19]{IarrKan:PowerSumsBook}.

\begin{theorem}\label{thm: tensor decomp}
 Let $D \geq 2d$. If $F \in T_D$ is a general form of rank $r < h_S(d) - n$, then
 \begin{enumerate}[label=\textup{(\roman*)}]
  \item there is a unique Waring decomposition $Z \subseteq (\bbP^n)^r$ of length $r$ and
  \item $\Ker C_F(d,D-d) = I(Z)_d $ generates the chopped ideal $I(Z)_{\langle d\rangle}$.
 \end{enumerate}
\end{theorem}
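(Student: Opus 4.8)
The plan is to reduce the statement to a rank computation for the catalecticant map $C_F(d,D-d)$ and then to invoke \autoref{thm:trisec} for uniqueness. Fix $d$ with $r < h_S(d)-n$, and recall $D\geq 2d$. The easy half is one inclusion: if $Z=(z_1,\dots,z_r)$ is a minimal Waring decomposition of $F$, so the $z_i$ are pairwise non-proportional and the coefficients $c_i$ in \eqref{eq:waring} are nonzero, then the apolarity lemma gives $I(Z)\subseteq\Ann(F)$, and since $\Ann(F)_d=\Ker C_F(d,D-d)$ by definition of the catalecticant, we get $I(Z)_d\subseteq \Ker C_F(d,D-d)$. It therefore suffices to establish the dimension equalities $\dim_\KK\Ker C_F(d,D-d)=\dim_\KK I(Z)_d=h_S(d)-r$; the assertion that $\Ker C_F(d,D-d)$ \emph{generates} $I(Z)_{\langle d\rangle}$ is then immediate from the definition of the chopped ideal.

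Next I would show that a general $F$ of rank $r$ admits a decomposition that is a general point configuration. The secant parametrization $(z_1,\dots,z_r)\mapsto\bigl[\sum_i(z_i\cdot y)^D\bigr]$, defined on a dense open subset of the irreducible variety $(\PP^n)^r$, is dominant onto the $r$-th secant variety of the Veronese; hence for $F$ general in that secant variety one may choose a decomposition $Z=(z_1,\dots,z_r)$ which is a general point of $(\PP^n)^r$. Fix such a $Z$. Then $Z\in U_{\genHF}$, so $\dim_\KK I(Z)_d=h_S(d)-r$ by \eqref{eq:genericHF}, using $r<h_S(d)$. This is where $D\geq 2d$ enters: it gives $h_S(D-d)\geq h_S(d)>r$, so — $Z$ being general — the $r$ powers $(z_1\cdot y)^{D-d},\dots,(z_r\cdot y)^{D-d}$ are linearly independent in $T_{D-d}$, and the evaluation map $S_d\to\KK^r$, $g\mapsto(g(z_1),\dots,g(z_r))$, is surjective.

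Now for $g\in S_d$ one computes $g\bullet F=\tfrac{D!}{(D-d)!}\sum_i c_i\,g(z_i)\,(z_i\cdot y)^{D-d}$, so $\Im C_F(d,D-d)$ is contained in the span of the $r$ linearly independent powers $(z_i\cdot y)^{D-d}$, and equals it because the evaluation $g\mapsto(g(z_i))_i$ is onto and the $c_i$ are nonzero. Hence $\rank C_F(d,D-d)=r$, so $\dim_\KK\Ker C_F(d,D-d)=h_S(d)-r=\dim_\KK I(Z)_d$; combined with the inclusion of the first paragraph this forces $\Ker C_F(d,D-d)=I(Z)_d$, which proves the second bullet. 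For uniqueness, let $W$ be any length-$r$ Waring decomposition of $F$; since $F$ has rank $r$, the summands of $W$ are pairwise non-proportional. Apolarity gives $I(W)_d\subseteq\Ann(F)_d=\Ker C_F(d,D-d)=I(Z)_d$, while $W$ imposes at most $r$ conditions in degree $d$, so $\dim_\KK I(W)_d\geq h_S(d)-r=\dim_\KK I(Z)_d$; hence $I(W)_d=I(Z)_d$. Since $r<h_S(d)-n$ and $Z$ is general, \autoref{thm:trisec}(iii) (equivalently \autoref{thm:chopCutsZ}) gives $\rmV(I(Z)_d)=Z$ as reduced schemes, so $W\subseteq\rmV(I(W)_d)=Z$, and comparing cardinalities yields $W=Z$.

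The main obstacle is the second step: the claim that the decomposition of a \emph{general} form of rank $r$ is a \emph{general} tuple of points, rather than merely some tuple of $r$ distinct points. It is precisely this that licenses the appeals to \eqref{eq:genericHF} and to \autoref{thm:trisec}, and it is what the cited results of Iarrobino--Kanev --- together with the irreducibility of the secant variety in this subgeneric range --- are there to supply. A secondary technical point, easily dealt with, is checking that $\Im C_F(d,D-d)$ really fills the full span of the $r$ powers (and not just a subspace): this is the surjectivity of the degree-$d$ evaluation map, which again is part of what genericity of $Z$ buys.
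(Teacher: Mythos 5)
Your proof is correct, and it is worth noting that the paper itself offers no argument for this theorem: it is recorded as a consequence of Iarrobino--Kanev (Thm.~2.6, Lem.~1.19), so yours is a self-contained reconstruction of the standard apolarity/catalecticant argument that those citations package. Your three steps are all sound: (a) the genericity transfer --- restricting the secant parametrization to any prescribed dense open subset $U\subseteq(\PP^n)^r$ keeps it dominant, so a general $F$ of rank $r$ admits a length-$r$ decomposition with $Z\in U$; you correctly identify this as the crux and handle it, though ``general point of $(\PP^n)^r$'' should be read precisely as ``in the dense open set where \eqref{eq:genericHF}, linear independence of the powers $(z_i\cdot y)^{D-d}$, and \autoref{thm:trisec}(iii) all hold''; (b) the rank computation $\operatorname{rank} C_F(d,D-d)=r$ via $g\bullet F=\tfrac{D!}{(D-d)!}\sum_i c_i\,g(z_i)\,(z_i\cdot y)^{D-d}$, surjectivity of evaluation at $Z$ (part of \eqref{eq:genericHF}) and independence of the $(D-d)$-th powers (here $D\geq 2d$ enters), which together with the apolarity inclusion $I(Z)_d\subseteq\Ker C_F(d,D-d)$ and $\dim_\KK I(Z)_d=h_S(d)-r$ forces equality; and (c) uniqueness via $I(W)_d\subseteq\Ker C_F(d,D-d)=I(Z)_d$, the dimension count showing equality, and $\rmV(I(Z)_d)=Z$ from \autoref{thm:trisec}(iii), where you rightly observe that a length-$r$ decomposition of a rank-$r$ form has pairwise non-proportional summands and nonzero coefficients. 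In effect, your appeal to \autoref{thm:trisec}(iii) (the Multisecant Lemma) plays the role that the cited Iarrobino--Kanev results play in the paper; the underlying mathematics is the same, but your write-up makes the dependence on the paper's own machinery explicit, which is a reasonable alternative to the citation-only treatment.
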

This suggests a strategy for computing the Waring decomposition \eqref{eq:waring} of $F \in T_D$, compare \cite[Alg.~2.82]{bernardi2018hitchhiker}:
\begin{enumerate}
    \item Construct the catalecticant matrix $C_F(d,D-d)$, with $d = \lfloor \frac{D}{2} \rfloor$.
    \item Compute a basis $f_1, \ldots, f_s$ for the kernel of $C_F(d,D-d)$ using linear algebra over $\KK$,
    \item Solve the polynomial system $f_1 = \dots = f_s = 0$ on $\mathbb{P}^n$. Let $Z = (z_1, \ldots, z_r) \in (\KK^{n+1})^r$ be the tuple of homogeneous coordinate vectors for the solutions.
    \item Solve the linear equations \eqref{eq:waring} for $c_1, \ldots, c_r$.
\end{enumerate}
When the rank of $F$ is at most $h_S(\lfloor D/2 \rfloor) - (n+1)$, and under suitable genericity assumptions, \autoref{thm: tensor decomp} guarantees that this method computes the unique Waring decomposition of $F$. Moreover, if one knows $r < h_S(d) - n$ for some $d < D/2$, the approach can be made more efficient by computing a smaller catalecticant matrix.

\begin{example}\label{ex: 18 points tensor}
    Consider a general ternary form of degree $D = 10$ with Waring rank $r =18$,
    \[
    F \,=\, (z_1 \cdot y)^{10} + \cdots + (z_{18} \cdot y)^{10}.
    \]
    Here $y = (y_0,y_1,y_2)$ and each $z_i$ has three coordinates as well. The catalecticant matrix $C_F(5,5)$ is of size $21 \times 21$, and has rank $18$. Its kernel consists of three ternary quintics in the variables $x_0, x_1, x_2$ passing through the $18$ prescribed points $z_1, \ldots, z_{18}$. They generate the chopped ideal $I_{\langle 5 \rangle} = \langle I(Z)_5 \rangle$ investigated in \autoref{sec:quintics}. 
\end{example}

The main work in this strategy is step 3: solving the polynomial system $f_1 = \cdots = f_s = 0$. If $\KK = \mathbb{C}$, two important strategies for doing this numerically are \emph{homotopy continuation} \cite{timme2021numerical} and \emph{numerical normal form methods} \cite{telen2020thesis}. We argue that in this setting it is natural to use the latter type of methods. Indeed, by construction, the system has $s = h_S(d) - r > n$ equations and $n+1$ variables, hence it is overdetermined. In homotopy continuation, this is typically dealt with by solving a square subsystem of $n$ equations which has, by B\'ezout's theorem, $d^n > r$ solutions. These candidate solutions are filtered by checking if all remaining equations also vanish. However, computing all these $d^n$ solutions becomes quickly infeasible. More refined algorithms using homotopy continuation are proposed in \cite{BerDalHauMou:TensorDecompHomCont}, but they rely on the knowledge of certain information on secant varieties which is out of reach with current methods. On the contrary, numerical normal form methods work directly with the overdetermined system, see \cite[Sec.~4.4]{bender2021yet}. A second advantage is that, while homotopy continuation requires $\KK = \mathbb{C}$, normal form methods work over any algebraically closed field $\KK$. 

Numerical normal form methods such as \cite{bender2021yet} and \cite[Sec.~4.5]{telen2020thesis} compute the points $z_i$ via the eigenvalues and/or eigenvectors of pairwise commuting \emph{multiplication matrices}. These are in turn computed from a different matrix $M(d+e)$, called \emph{Macaulay matrix}. Here $e \geq \gamma_{n,d}(r)$ is a positive integer for which $h_{S/I_{\langle d \rangle}}(d+e) = r$: the number of rows of $M(d+e)$ is $h_S(d+e)$, and its column span is $I(Z)_{d+e}$. In particular, upper bounds on the saturation gap allow one to work with the smallest admissible value $e_0$. We summarize the relation between Waring decomposition and \autoref{conj:main} as follows: 
\begin{center}
    \textit{The complexity of computing multiplication matrices in our setting is governed by linear algebra computations with the Macaulay matrix $M(d+e_0)$, where $e_0 = \gamma_{n,d}(r)$.}
\end{center}
To illustrate this punchline, we implemented the catalecticant algorithm in the Julia package \texttt{Catalecticant.jl}. It uses \texttt{EigenvalueSolver.jl}, a general purpose equation solver from \cite{bender2021yet}. Here is how to construct and decompose $F$ from \autoref{ex: 18 points tensor}:
\begin{minted}{julia}
n = 2; D = 10; r = 18; # define the parameters
@polyvar y[1:n+1] # variables of F
Z = exp.(2*pi*im*rand(r, n+1)) # generate random points Z 
F = sum((Z*y).^D)
c, linforms = waring_decomposition(F,y,r) # decompose F
\end{minted}
Here line 3 draws the coordinates of the points $Z$ uniformly from the unit circle in the complex plane; this avoids bad numerical behavior in the expansion $(z_i \cdot y)^D$. The output at line 5 is the pair of coefficients of the linear forms from \eqref{eq:waring}. This method assumes \autoref{conj:main}: \texttt{EigenvalueSolver.jl} constructs the Macaulay matrix $M(d+e_0)$, where $e_0 = \gamma_{n,d}(r)$ is the saturation gap predicted in \autoref{conj:main}. In this specific case, we have $d+e_0 = 7$, as illustrated in \autoref{ex:intro1}

The code is available at \url{https://mathrepo.mis.mpg.de/ChoppedIdeals/}. It includes a file \texttt{examples.jl} which illustrates some other functionalities, such as computing Hilbert functions, catalecticant matrices and their kernel. Our code performs well, and may be of independent interest for Waring decomposition. On a 16 GB MacBook Pro with an Intel Core i7 processor working at 2.6 GHz, it computes the decomposition of a rank $r = 400$ form of degree $D = 12$ in $n + 1 = 6$ variables with $10$ digits of accuracy within 25 seconds. 

\section*{Future work}

We conclude with some directions for future research. Chopped ideals are relevant for a large class of varieties, besides projective space. For instance, other types of tensor decomposition lead to points in multi-projective space \cite{TelVan:NormalFormAlgorithm}. One can also study ideals of points, and their chopped ideals, in arbitrary toric varieties, rational homogeneous varieties, or other varieties for which it makes sense to consider a multi-graded Hilbert function. This relates to decomposition algorithms and secant varieties as in \cite{BuczBucz:ApolarityBorder,Staf:SchurApolarity,Gal:MultigradedApo}. 

Finally, it is possible to study chopped ideals for positive-dimensional varieties. For instance, there are $7$ sextics passing through $11$ general lines in $\mathbb{P}^3$. These generate a non-saturated chopped ideal, whose saturation is the vanishing ideal of the union of the lines, which has $4$ additional generators in degree 7. For more general classes of varieties, there are several possible choices of chopped ideal to consider, and it would be interesting to explore generalizations of \autoref{conj:chopped_hilbert_function_intro}.

\section*{Acknowledgements}
We would like to thank Edoardo Ballico, Alessandra Bernardi, Luca Chiantini, Liena Colarte-G\'omez, Aldo Conca, Anne Frühbis-Krüger and Alessandro Oneto for helpful conversations and useful pointers to the literature. We thank Jarek Buczy\'nski for his valuable suggestions regarding the proof of \autoref{prop: linear cut}.

\appendix

\section{On the missing sextic}\label{app: missing sextic}

Consider a partition of $Z$ into two disjoint subsets $Z_1,Z_2$, each consisting of $9$ points. Since $h_S(3) = 10$, there are two distinct cubics $g_1,g_2$ such that $I(Z_i)_3 = \langle g_i \rangle_\KK$. We will prove that the sextic $g =g_1g_2$ is not generated by the quintics $f_0,f_1,f_2$; in particular $g \notin ({I_{\langle 5 \rangle}})_6$ and $I(Z) = \langle f_0, f_1,f_2, g \rangle_S$. In order to prove this result, we introduce some geometric tools. 

The first one is an elementary fact about fibers of a branched cover, which is at the foundation of monodromy techniques:
\begin{proposition}\label{prop: monodromy}
Let $f \colon X \to Y$ be a dominant generically finite map of irreducible varieties. Let $Z$ be a closed subvariety of $X$ which intersects general fibers of $f$. Then $Z=X$.
\end{proposition}
\begin{proof}
Since $Z$ intersects general fibers of $f$, we have that $f|_Z$ is dominant too. 
Since $f|_Z$ is dominant and $Z$ is a subvariety of $X$, we have $\dim X \geq \dim Z \geq \dim Y$. On the other hand, since $f$ is dominant and finite, we have $\dim X = \dim Y$. Therefore $\dim Z = \dim X$ and since $X$ is irreducible, we conclude $Z = X$.
\end{proof}

The second result is \autoref{prop: linear cut}, which consists in a generalization of \cite[Lem.~8.1]{BucLan:RanksTensorsAndGeneralization}. In order to prove it, we need the following version of Bertini's Theorem, derived from \cite[Thm.~6.3]{Jou:Bertini}:
\begin{lemma}\label{lemma: bertini}
Let $X \subseteq \bbP^N$ be an irreducible projective variety with singular locus $X_{\sing}$. Let $J$ be a linear series on $X$ with base locus $B \subseteq X$ and let $Y \in J$ be a general element. Then $Y \setminus (X_\sing \cup B)$ is smooth. Moreover, if $\dim J \geq 2$, then $Y \setminus B$ is irreducible.
\end{lemma}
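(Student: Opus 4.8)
The plan is to reduce the statement to the classical Bertini theorems in Jouanolou's general form by replacing the linear series $J$ with a morphism to a projective space. Write $J = \PP(V)$, where $L$ is a line bundle on $X$ and $V \subseteq \rmH^0(X,L)$ is the $(m+1)$-dimensional space of sections defining $J$, with $m = \dim J$; its base locus is $B = \bigcap_{s \in V}\rmV(s)$. Evaluation of sections gives a morphism $\phi_J \colon X \setminus B \to \PP(V^\vee) = \PP^m$, and under the duality between $\PP^m$ and $(\PP^m)^\vee$ a member $Y \in J = \PP(V)$ corresponds to a hyperplane $H_Y \subseteq \PP^m$ with $Y \setminus B = \phi_J^{-1}(H_Y)$; a general $Y \in J$ corresponds to a general hyperplane $H$. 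After this translation, both assertions of the lemma become statements about the general hyperplane section of the morphism $\phi_J$, which is precisely the situation covered by \cite[Thm.~6.3]{Jou:Bertini}.

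For the smoothness assertion I would pass to the open subset $X^\circ \coloneqq X \setminus (X_\sing \cup B)$, which is smooth and quasi-projective over the characteristic-zero field $\KK$. The morphism $\phi_J$ is regular on $X^\circ$, and the family $\{\phi_J^{-1}(H)\}_H$ of hyperplane sections is base-point-free on $X^\circ$, since its base locus is $\phi_J^{-1}\big(\bigcap_H H\big) = \phi_J^{-1}(\emptyset) = \emptyset$. Bertini's theorem on generic smoothness — valid in characteristic zero and a special case of \cite[Thm.~6.3]{Jou:Bertini} — then gives that $\phi_J^{-1}(H) \cap X^\circ = Y \setminus (X_\sing \cup B)$ is smooth for $H$ in a dense open subset of $(\PP^m)^\vee$, and hence for a general $Y \in J$.

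For the irreducibility assertion, I would first note that $X \setminus B$ is irreducible, being dense and open in the irreducible variety $X$, and then apply Jouanolou's Bertini irreducibility theorem \cite[Thm.~6.3]{Jou:Bertini} to $\phi_J \colon X \setminus B \to \PP^m$: as long as the image $\overline{\phi_J(X \setminus B)}$ has dimension at least $2$, the fibre $\phi_J^{-1}(H) = Y \setminus B$ over a general hyperplane $H$ is irreducible (and nonempty, of dimension $\dim X - 1$). The step I expect to require the most care — the only one that is not pure bookkeeping — is confirming that $\dim \overline{\phi_J(X \setminus B)} \geq 2$; this is exactly where the hypothesis $\dim J \geq 2$ enters, and it implicitly relies on $J$ being large enough that $\phi_J$ is generically finite onto its image, as is the case in the intended applications. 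Granting this, the remaining work — the dictionary between $J$ and $\phi_J$, keeping track of $B$ and $X_\sing$, and translating "general $H$" into "general $Y \in J$" — is routine.
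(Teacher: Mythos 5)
Your route is essentially the paper's: encode the series as the morphism $\phi_J\colon X\setminus B\to\PP^m$, identify members of $J$ with preimages of hyperplanes, and invoke \cite[Thm.~6.3]{Jou:Bertini} for the preimage of a general hyperplane. The only structural difference is that the paper first covers $\PP^m$ by affine opens so that any two points lie in a common chart, pulls these back to $X\setminus B$, applies Jouanolou chart by chart, and patches (smoothness being local, while a reducible $Y\setminus B$ would place two components inside one chart, contradicting irreducibility there). Your direct application to the quasi-projective variety $X\setminus B$, after discarding $X_\sing$ for the smoothness clause, is a legitimate shortcut, since Jouanolou's theorem only needs a finite-type scheme over the field; nothing is lost relative to the paper on this part.

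The step you flag is the real crux, and your caution is warranted: Jouanolou's irreducibility statement requires $\dim\overline{\phi_J(X\setminus B)}\geq 2$, and this does \emph{not} follow from $\dim J\geq 2$. A series composed with a pencil --- e.g.\ the hyperplane series of a plane conic, or its pullback to a surface mapping onto that conic --- has $\dim J=2$, empty base locus, and reducible general member, so no argument can close that implication; the hypothesis that is actually used is that the image of $\phi_J$ has dimension at least two (equivalently, the series is not composed with a pencil), and it must be imposed or verified. The paper's proof cites \cite[Thm.~6.3]{Jou:Bertini} without addressing this hypothesis at all, so on this point your write-up is more careful than the paper's, although \enquote{granting this} leaves your argument (like the paper's) short of the statement as literally phrased. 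One correction to your proposed justification: in the intended application (\autoref{prop: linear cut}) the map $\phi_J$ is the projection from $L$ restricted to $X_{j-1}$, whose target has dimension smaller than $\dim X_{j-1}$ and whose fibers are positive-dimensional, so it is never generically finite onto its image; what has to be checked there is precisely that the image has dimension at least $2$, which is the genuinely non-routine point both for the lemma and for its use in the proposition.
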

\begin{proof}
Write $m + 1 = \dim J$; the linear series on $X$ defines a regular map $\phi\colon X \setminus B \to \bbP^{m}$ and $Y$ is the (closure of the) the generic fiber of this map.
Consider an affine open cover of $\bbP^{m}$ with the property that every pair of points belongs to at least one affine open subset of the cover. The preimages of the open sets of this cover define a cover of $X \setminus B$ using open quasi-projective varieties. Any pair of points of $X \setminus B$ belongs to at least one quasi-projective variety of this cover.

On each of these open sets the statement is true by \cite[Thm.~6.3]{Jou:Bertini}. Since smoothness can be checked locally, this guarantees that $Y \setminus (X_\sing \cup B)$ is smooth. If $Y \setminus B$ were reducible, consider a quasi-projective open set of the cover which intersects two distinct irreducible components. Then \cite[Thm.~6.3, part~4]{Jou:Bertini} yields a contradiction.
\end{proof}

We now prove that a reduced $0$-dimensional linear section of a linearly non-degenerate variety is itself non-degenerate. This is a higher codimension analog of \cite[Lem.~8.1]{BucLan:RanksTensorsAndGeneralization}.

\begin{proposition}\label{prop: linear cut}
 Let $X \subseteq \bbP^N$ be an irreducible variety of dimension $c$ not contained in a hyperplane. Let $L$ be a linear space with $\codim L = c$. Suppose $X \cap L$ is a set of reduced points. Then $X\cap L$ is not contained in a hyperplane in $L$.
\end{proposition}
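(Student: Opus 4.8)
The plan is a direct projection argument; no induction on $\dim X$ is needed. Suppose for contradiction that $X\cap L$ does not span $L$, and fix a hyperplane $M$ of $L$ with $X\cap L\subseteq M$; thus $M$ has codimension $c+1$ in $\PP^N$. Since $M\subseteq L$ we get $X\cap M\subseteq X\cap L\subseteq M$, so $X\cap M=X\cap L$ as schemes: this is a finite, nonempty (by the projective dimension theorem, as $\dim X+\dim L=N$), reduced set, and $X$ is automatically smooth along it — a singular point $z\in X\cap L$ would have embedded tangent space of dimension $>c$, forcing $\dim(\mathbb{T}_zX\cap L)>0$ and hence making $X\cap L$ non-reduced at $z$.

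First I would project from $M$. The rational map $\pi_M\colon\PP^N\dashrightarrow\PP^{c}$ (the target has dimension $N-\dim M-1=c$) restricts on $X$ to a map whose base scheme is the reduced set $X\cap M$; blowing that up gives an irreducible projective variety $\tilde X=\Bl_{X\cap M}X$ and a morphism $\psi\colon\tilde X\to\PP^c$, with exceptional divisor $E=\bigsqcup_{z\in X\cap M}E_z$, each $E_z\cong\PP^{c-1}$ because $z$ is a smooth point of $X$. Put $W=\overline{\pi_M(X)}\subseteq\PP^c$. A standard observation is that $W$ is non-degenerate in $\PP^c$: were $W$ contained in a hyperplane $\bar H$, then $X\subseteq\pi_M^{-1}(\bar H)$, a hyperplane of $\PP^N$, contradicting non-degeneracy of $X$.

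The crux is the behaviour of $\psi$ over the point $p_0=\pi_M(L)$, whose fiber $\pi_M^{-1}(p_0)$ is the linear space $L$. No point of $X\setminus M$ maps to $p_0$, since its preimage in $X$ is $X\cap L\subseteq M$; and for $z\in X\cap M$ a tangent direction $v\in E_z$ is sent to $p_0$ only if $v$ lies in the direction space of the fiber $L$, i.e.\ $v\in\mathbb{T}_zX\cap L=\{z\}$ — impossible, again because $X\cap L$ is reduced at $z$. Hence $\psi^{-1}(p_0)=\varnothing$, so $\psi(\tilde X)$ is a proper closed irreducible subvariety of $\PP^c$, of dimension at most $c-1$. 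On the other hand, for each $z\in X\cap M$ the image $\psi(E_z)=\pi_M(\mathbb{T}_zX)$ is a \emph{linear} hyperplane of $\PP^c$: since $\mathbb{T}_zX\cap M=\{z\}$ the span $\langle\mathbb{T}_zX,M\rangle$ is a hyperplane of $\PP^N$, and its image under $\pi_M$ is a hyperplane of $\PP^c$. So the irreducible variety $\psi(\tilde X)$, of dimension at most $c-1$, contains the $(c-1)$-dimensional linear space $\psi(E_z)$ and therefore equals it; consequently $W\subseteq\psi(\tilde X)$ is contained in a hyperplane of $\PP^c$, contradicting non-degeneracy of $W$. This contradiction proves $\langle X\cap L\rangle=L$.

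The step I expect to be the main obstacle is pinning down $\psi(E_z)$ as a genuine linear hyperplane of $\PP^c$ — this is precisely where the reducedness of $X\cap L$ enters (through $\mathbb{T}_zX\cap M=\{z\}$) — together with the bookkeeping needed to see that the base scheme of $\pi_M|_X$ is exactly the reduced scheme $X\cap M$, so that the blow-up resolves the map and $X$ is smooth there. A more classical route is to first apply \autoref{lemma: bertini} to cut $X$ down by general hyperplanes to the case $\dim X=1$, where $\tilde X=X$ and $\pi_M|_X$ is automatically a morphism, and then run the same projection argument; the blow-up version above simply does all dimensions at once.
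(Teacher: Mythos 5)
Your argument is correct, and it takes a genuinely different route from the paper's. The paper proceeds by slicing: after making the same initial observation as you (reducedness of $X\cap L$ forces $X$ to be smooth along $X\cap L$, there via a regular-sequence argument, in your case via the tangent-space count $\dim(\mathbb{T}_zX\cap L)>0$), it chooses a general flag $L=L_c\subseteq L_{c-1}\subseteq\cdots\subseteq L_0=\bbP^N$, uses its Bertini-type statement (\autoref{lemma: bertini}) to keep each $X\cap L_j$ irreducible and smooth away from the singular locus and the base locus, and then applies the codimension-one case \cite[Lem.~8.1]{BucLan:RanksTensorsAndGeneralization} repeatedly to propagate non-degeneracy down the flag. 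You instead fix a hyperplane $M\subseteq L$ supposedly containing $X\cap L$, project from $M$, and resolve the projection by blowing up the base scheme, which you correctly identify with the reduced scheme $X\cap L$ supported at smooth points of $X$; the key computations check out: the resolved map sends each exceptional $\PP^{c-1}$ linearly onto the hyperplane $\pi_M(\mathbb{T}_zX)\subseteq\PP^c$ (using $\mathbb{T}_zX\cap M=\{z\}$), the fibre over $\pi_M(L)$ is empty (using $\mathbb{T}_zX\cap L=\{z\}$, again reducedness), so the image is an irreducible proper closed subset of $\PP^c$ containing a hyperplane, hence equal to it, contradicting non-degeneracy of $\pi_M(X)$ and hence of $X$. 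What each approach buys: yours is self-contained, avoiding both the Bertini lemma and the external result of Buczy\'nski--Landsberg, and handles all codimensions in one stroke at the cost of the blow-up bookkeeping you flag (which is standard: the base ideal is the maximal ideal at each of the finitely many smooth base points, so blowing up resolves the map and the exceptional map is the projectivized differential); the paper's proof is shorter given the cited codimension-one lemma, but must invest in the Bertini analysis to justify the repeated slicing. The only points worth making explicit in your write-up are the trivial edge case $c\geq 1$ (needed so that $X\setminus M$ is dense in $X$ and $E_z\neq\emptyset$) and the one-line verification that the scheme-theoretic base locus $X\cap M$ coincides with $X\cap L$, both of which you essentially have.
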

\begin{proof}
Let $I(L) = \langle \ell_1 \vvirg \ell_c \rangle$ be the ideal of $L$. Observe that the the points of $X \cap L$ are smooth points of $X$. To see this, let $p \in X \cap L$ and consider the local ring $\calO_{X,p}$; since $X \cap L$ is a set of reduced points, $I(L)_p \subseteq \calO_{X,p}$ coincides with the maximal ideal in $\calO_{X,p}$; in particular (the localizations of) $(\ell_1 \vvirg \ell_c)$ define a regular sequence in $\calO_{X,p}$, showing that $\calO_{X,p}$ is a regular local ring, and equivalently that $p$ is smooth in $X$.

Let $L = L_c \subseteq L_{c-1} \subseteq \cdots \subseteq L_1 \subseteq L_0 = \bbP^N$ be a general flag of linear spaces, with $\codim L_j = j$. For every $j$, define $X_j = X \cap L_j$; in particular $X_c = X \cap L$. For every $j = 0 \vvirg c-1$, we will show that $X_j$ is irreducible and smooth away from the singular locus of $X$. We proceed by induction on $j$. The base case $j = 0$ is straightforward.

For $ j \geq 1$, suppose $X_{j-1}$ is irreducible and smooth away from the singular locus of $X$. Then $I(L)|_{L_{j-1}}$ defines a (non-complete) linear series on $X_j$. Since $L_{j} \subseteq L_{j-1}$ is general among planes containing $L$, \autoref{lemma: bertini} guarantees that $X_j = X_{j-1} \cap L_{j}$ is smooth away from the singularities of $X_{j-1}$ and the base locus of $I(L)$. Moreover, since $j \leq c-1$, $\dim X_{j-1} \geq 2$, hence $X_{j}$ is irreducible except possibly for components supported in the base locus of $I(L)$. The base locus of $I(L)$ is $X \cap L$, which, as shown above, consists of smooth points of $X$. This guarantees that there are no embedded components nor singularities supported on the points of $X \cap L$. We conclude that for every $j = 0 \vvirg c-1$, $X_j$ is irreducible and smooth away from $X_{\rm sing}$. 

An induction argument on $j = 0 \vvirg c-1$, with successive applications of \cite[Lem.~8.1]{BucLan:RanksTensorsAndGeneralization}, shows that $X_{j+1} = X \cap L_{j+1}$ is linearly non-degenerate in $L_{j+1}$. In particular, $X \cap L_c = X_{c-1} \cap L$ is linearly non-degenerate in $L$. This concludes the proof.
\end{proof}

Finally, we will use that the degree of the variety 
\[
\calP _{3,3} \,\coloneqq\, \Set{ g \in \bbP S_6 | g = g_1g_2 \text{ for some $g_1,g_2 \in S_3$} } \,\subseteq\, \bbP S_6
\]
is $\frac{1}{2} \binom{18}{9} = 24310$. This can be computed using elementary intersection theory; see, e.g., \cite[Sec.~2.2.2]{EisHar:3264} for a similar calculation. 
\begin{proposition}
 Let $Z \subseteq \bbP^2$ be a set of $18$ general points. For every bipartition $Z = Z_1 \mathbin{\dot{\cup}} Z_2$ of $Z$ into two sets of $9$ points, one has $g = g_1g_2 \notin I(Z)_{\langle 5 \rangle}$, where $I(Z_i) = \langle g_i \rangle_S$.
\end{proposition}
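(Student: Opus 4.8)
The plan is to reinterpret the statement in terms of the variety $\calP_{3,3}\subseteq\PP(S_6)$ of products of two cubics and then read it off from \autoref{prop: linear cut}, using the value $\deg\calP_{3,3}=\tfrac12\binom{18}{9}$, which is exactly the number of ways to split an $18$-element set into two unordered blocks of $9$. Fix a general $Z$. By \eqref{eq:genericHF} we have $\dim_\KK I(Z)_6 = 28-18 = 10$, while $\dim_\KK (I_{\langle 5\rangle})_6 = 3\cdot 3 = 9$ because the three quintics generating $I_{\langle 5\rangle}$ have no linear syzygies (the Ideal Generation Conjecture in $\PP^2$, recalled in \autoref{sec:quintics}); thus $L':=\PP\big((I_{\langle 5\rangle})_6\big)$ is a hyperplane in $L:=\PP\big(I(Z)_6\big)\cong\PP^9$. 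For each bipartition $\pi\colon Z=Z_1\mathbin{\dot{\cup}} Z_2$, writing $I(Z_i)_3=\langle g_i\rangle_\KK$, the sextic $g^\pi:=g_1g_2$ vanishes on $Z$, so $[g^\pi]\in\calP_{3,3}\cap L$; the assertion to be proved is precisely that no $[g^\pi]$ lies in $L'$. I would first establish the reverse inclusion $\calP_{3,3}\cap L\subseteq\{[g^\pi]\}_\pi$: if $g=h_1h_2\in I(Z)_6$ with $0\neq h_i\in S_3$, then $Z\subseteq\rmV(h_1)\cup\rmV(h_2)$, and since $10$ general points of $\PP^2$ impose independent conditions on cubics, each $Z\cap\rmV(h_j)$ contains at most $9$ points, hence exactly $9$, and the two sets partition $Z$; as $h_j$ is then the unique cubic through those $9$ general points, $[g]=[g^\pi]$ for the corresponding $\pi$. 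So, for general $Z$, $\calP_{3,3}\cap L$ is finite and set-theoretically equal to the $\tfrac12\binom{18}{9}$ distinct points $\{[g^\pi]\}_\pi$.

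Next I would make this scheme-theoretic and invoke \autoref{prop: linear cut}. The variety $\calP_{3,3}$ is irreducible and $18$-dimensional (it is the image of the generically two-to-one map $\PP(S_3)\times\PP(S_3)\to\PP(S_6)$) and linearly non-degenerate, since every degree-$6$ monomial factors as a product of two degree-$3$ monomials; moreover $\codim L = 27-9 = 18 = \dim\calP_{3,3}$. As $\calP_{3,3}\cap L$ is a proper linear section of complementary codimension, its length is at most $\deg\calP_{3,3}=\tfrac12\binom{18}{9}$; since it already contains that many distinct points, it is reduced. \autoref{prop: linear cut} then yields that $\calP_{3,3}\cap L$ is not contained in a hyperplane of $L$, i.e.\ the points $\{[g^\pi]\}_\pi$ span $L=\PP(I(Z)_6)$; in particular they do not all lie in the hyperplane $L'$.

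Finally I would upgrade "not all of them" to "none of them" by monodromy. On the dense open locus $\mathcal{U}_0\subseteq(\PP^2)^{18}$ where all the genericity assumptions above hold, form the incidence variety $\mathcal{I}=\{(Z,[g])\in\mathcal{U}_0\times\calP_{3,3}\mid g\in I(Z)_6\}$ and its closed subvariety $W=\{(Z,[g])\in\mathcal{I}\mid g\in(I_{\langle 5\rangle})_6\}$. The map $\mathcal{I}\to\mathcal{U}_0$ is finite (projective with finite fibre $\{[g^\pi]\}_\pi$ over each $Z$), and $\mathcal{I}$ is irreducible because a loop in $\mathcal{U}_0$ permuting the $18$ marked points acts on a fibre by the corresponding permutation of bipartitions, so the monodromy group surjects onto the transitive action of $\frakS_{18}$. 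By the previous paragraph the fibre over a general $Z$ is not contained in $W$, so $W\subsetneq\mathcal{I}$; hence \autoref{prop: monodromy} forces $W$ to miss the generic fibre, which says exactly that for general $Z$ no $g^\pi$ lies in $(I_{\langle 5\rangle})_6$. This is the claim.

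The main obstacle I anticipate is the scheme-theoretic step: showing that for a general $Z$ the intersection $\calP_{3,3}\cap L$ is genuinely zero-dimensional — not acquiring an unexpected positive-dimensional component — and hence reduced of the expected length, so that \autoref{prop: linear cut} applies. The only substantive inputs are the classical fact that $10$ general points of $\PP^2$ impose independent conditions on cubics and the degree bound for a proper linear section of complementary codimension; granting these, the finiteness, reducedness and identification with $\{[g^\pi]\}_\pi$ all follow, and the remaining ingredients (the Hilbert-function count for $L$ and $L'$, and transitivity of $\frakS_{18}$ on bipartitions) are routine.
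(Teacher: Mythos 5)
Your overall strategy is the paper's: identify $\calP_{3,3}\cap\bbP I(Z)_6$ with the $\tfrac12\binom{18}{9}$ bipartition sextics, show this is a reduced zero-dimensional linear section of complementary codimension, apply \autoref{prop: linear cut} to get linear non-degeneracy (so at least one $[g^\pi]$ avoids the hyperplane $\bbP((I_{\langle 5\rangle})_6)$), and then use monodromy to upgrade \enquote{at least one} to \enquote{all}. Two remarks on the middle step: your set-theoretic identification already forces the intersection to be finite, so the \enquote{main obstacle} you flag is settled by your own argument; on the other hand, the inequality \enquote{length of a zero-dimensional linear section $\leq$ degree} is not valid in general (it can fail at non-Cohen--Macaulay points of the variety). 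The correct route is that the Serre intersection multiplicities sum to $\deg\calP_{3,3}$, each is at least $1$, and since there are exactly $\deg\calP_{3,3}$ distinct intersection points, each multiplicity is $1$; at the points $[g^\pi]$, which are smooth points of $\calP_{3,3}$, multiplicity one then gives reducedness. This is fixable and consistent with the paper's genericity assertion.

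The genuine gap is the final monodromy step. Your incidence variety $\calI\to\calU_0$ is built over the \emph{ordered} configuration space $\calU_0\subseteq(\bbP^2)^{18}$, and over this base the covering splits: for each bipartition $\pi$ of the index set $\{1,\dots,18\}$, the assignment $Z\mapsto[g^\pi(Z)]$ is a single-valued algebraic section, so $\calI$ is a disjoint union of $\tfrac12\binom{18}{9}$ graphs, each mapping birationally to $\calU_0$. In particular $\calI$ is \emph{not} irreducible and the monodromy is trivial: a loop in $\calU_0$ returns to the same labeled tuple and cannot \enquote{permute the $18$ marked points} --- such a path is a loop only in the unordered quotient $\calU_0/\frakS_{18}$. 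Consequently \autoref{prop: monodromy} does not apply to $\calI$, and knowing that the general fibre is not contained in your locus $W$ does not exclude that some individual component $\calI_\pi$ is entirely contained in $W$, i.e.\ that one specific labeled bipartition always produces a sextic in the chopped ideal. There are two standard repairs: (a) use $\frakS_{18}$-equivariance --- $W$ is stable under relabelling, which permutes the components $\calI_\pi$ transitively, so if one component lay in $W$ then all would, contradicting the non-degeneracy step; this is exactly the device used in the proof of \autoref{thm:main3.3}; or (b) do as the paper does in \autoref{app: missing sextic}: set up the finite map over a base that forgets the labels (the space $\calY$ of degree-$6$ components of chopped ideals), with irreducible total space $\calX\subseteq(\bbP^2)^9\times(\bbP^2)^9\times\bbP S_3\times\bbP S_3$, where irreducibility is genuine because $\calX$ is dominated by $(\bbP^2)^{18}$; then \autoref{prop: monodromy} applies as stated.
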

\begin{proof}
The proof is structured as follows. We first show that there is some partition for which $g = g_1g_2 \in I(Z) \setminus I(Z)_{\langle 5 \rangle}$. Then, we use \autoref{prop: monodromy} to show that the same must hold for all partitions. 

Since $Z$ is general, we have $\dim_\KK I(Z)_6 = 10$, $\dim_\KK (I(Z)_{\langle 5 \rangle})_6 = 9$. Notice $\dim \calP_{3,3} = 9+9 = 18 = \codim_{\mathbb{P}S_6} \bbP I(Z)_6$. Let $W \coloneqq \calP_{3,3} \cap \bbP I(Z)_6 \subseteq \bbP S_6$.

For every $g = g_1g_2 \in W$, observe that $Z_i = Z \cap \{ g_i = 0\}$ defines a bipartition of $Z$ into two subsets of $9$ points. Indeed, $Z = Z_1 \cup Z_2$, and no subset of $10$ points in $Z$ has a cubic equation because of the genericity assumption. On the other hand, any bipartition of $Z = Z_1 \mathbin{\dot{\cup}} Z_2$ into two subsets of $9$ points gives rise to an element $g = g_1g_2 \in W$; by genericity, all these elements are distinct. This shows that $W = \calP_{3,3} \cap \bbP I(Z)_6$ is a set of $\frac{1}{2}\binom{18}{2} = \deg \calP_{3,3}$ points. In particular, $W$ is reduced. 

By \autoref{prop: linear cut}, $W$ is linearly non-degenerate in $\bbP I(Z)_6$, so it is not contained in the hyperplane $\bbP (I(Z)_{\langle 5 \rangle})_6$. This shows that at least one element of $W$ is not contained in the chopped ideal $I(Z)_{\langle 5 \rangle}$.

We now show that $W \subseteq \mathbb{P}I(Z)_6 \setminus \mathbb{P}(I(Z)_{\langle 5 \rangle})_6$. Consider the varieties
\begin{align*}
\calY &\,\coloneqq\, \bar{\Set{ (I(Z)_{\langle 5 \rangle})_6 \in \Gr(9, S_6) | \text{$Z \subseteq \bbP ^2$ is a set of $18$ points in general position} }},\\
\calX &\,\coloneqq\, \bar{ \Set{(Z_1,Z_2,g_1,g_2) \in (\bbP^2)^{9} \times (\bbP^2)^{9} \times \bbP S_3 \times \bbP S_3 | \!\!\begin{array}{c}
Z_1 \cap Z_2 = \emptyset \\ 
Z_1,Z_2, Z_1 \cup Z_2 \text{ in gen'l position} \\
I(Z_i)_3 = \langle g_i \rangle
\end{array}\!\! } }.
\end{align*}
These are projective subvarieties of the Grassmannian $\Gr(9,S_6)$ and of the product $(\bbP^2)^{9} \times (\bbP^2)^{9} \times \bbP S_3 \times \bbP S_3$ respectively.
Define the dominant generically finite rational map 
\begin{equation*}
    \phi \colon \calX \dashto \calY, \qquad (Z_1,Z_2,g_1,g_2) \mapsto (I(Z_1 \cup Z_2)_{\langle 5 \rangle})_6.
\end{equation*}

Let $\calZ$ be the subvariety of $\calX$ defined by 
\[
\calZ \,\coloneqq\, \bar{\set{ (Z_1,Z_2,g_1,g_2) | g_1g_2 \in (I(Z_1 \cup Z_2)_{\langle 5 \rangle})_6 }}.
\]
If $W \cap \bbP ((I(Z)_{\langle 5 \rangle})_6) \neq \emptyset$, then $\calZ$ would intersect the generic fiber of $\phi$. In this case, \autoref{prop: monodromy} guarantees $\calX = \calZ$. This implies that for every $(Z_1,Z_2,g_1,g_2) \in \calZ$, we have $g_1g_2 \in (I(Z_1 \cup Z_2)_{\langle 5 \rangle})_6$. In other words $W \subseteq \bbP (I(Z)_{\langle 5 \rangle})_6$, which contradicts what we saw above. We conclude $W \cap \bbP (I(Z)_{\langle 5 \rangle})_6 = \emptyset$. In other words, every $g = g_1g_2$ satisfies $g \notin I(Z)_{\langle 5 \rangle}$ as desired.
\end{proof}

\bibliographystyle{alphaurl} 
\bibliography{ideals.bib}

\subsection*{Authors' addresses:}

\noindent Fulvio Gesmundo, Universität des Saarlandes; \hfill \href{mailto:fgesmund@math.univ-toulouse.fr}{\tt fgesmund@math.univ-toulouse.fr}\\
\phantom{Fulvio Gesmundo,} {\small (currently) Institut de Mathématiques de Toulouse}

\noindent Leonie Kayser, MPI-MiS Leipzig \hfill \href{mailto:leo.kayser@mis.mpg.de}{\tt leo.kayser@mis.mpg.de}

\noindent Simon Telen, MPI-MiS Leipzig \hfill
\href{mailto:simon.telen@mis.mpg.de}{\tt simon.telen@mis.mpg.de}

\end{document}